\font\pppppcarac=ptmr8y at 5pt
\font\ppppcarac=ptmr8y at 6pt
\font\pppcarac=ptmr8y at 7pt
\font\ppcarac=ptmr8y at 8pt
\font\pcarac=ptmr8y at 9pt
\newcommand{\bfA}{{\mathbf{A}}}
\newcommand{\bfH}{{\mathbf{H}}}
\newcommand{\bfQ}{{\mathbf{Q}}}
\newcommand{\bfU}{{\mathbf{U}}}
\newcommand{\bfV}{{\mathbf{V}}}
\newcommand{\bfW}{{\mathbf{W}}}
\newcommand{\bfX}{{\mathbf{X}}}
\newcommand{\bfY}{{\mathbf{Y}}}
\newcommand{\bfZ}{{\mathbf{Z}}}
\newcommand{\bfun}{{\boldsymbol{1}}}
\newcommand{\bfa}{{\mathbf{a}}}
\newcommand{\bff}{{\mathbf{f}}}
\newcommand{\bfg}{{\mathbf{g}}}
\newcommand{\bfj}{{\mathbf{j}}}
\newcommand{\bfm}{{\mathbf{m}}}
\newcommand{\bfq}{{\mathbf{q}}}
\newcommand{\bfu}{{\mathbf{u}}}
\newcommand{\bfv}{{\mathbf{v}}}
\newcommand{\bfw}{{\mathbf{w}}}
\newcommand{\bfx}{{\mathbf{x}}}
\newcommand{\bfy}{{\mathbf{y}}}
\newcommand{\bfz}{{\mathbf{z}}}
\newcommand{\bfeta}{{\boldsymbol{\eta}}}
\newcommand{\bfvarphi}{{\boldsymbol{\varphi}}}
\newcommand{\bfpsi}{{\boldsymbol{\psi}}}
\newcommand{\JJ}{{\mathbb{J}}}
\newcommand{\MM}{{\mathbb{M}}}
\newcommand{\NN}{{\mathbb{N}}}
\newcommand{\PP}{{\mathbb{P}}}
\newcommand{\RR}{{\mathbb{R}}}
\newcommand{\curC}{{\mathcal{C}}}
\newcommand{\curE}{{\mathcal{E}}}
\newcommand{\curH}{{\mathcal{H}}}
\newcommand{\curI}{{\mathcal{I}}}
\newcommand{\curJ}{{\mathcal{J}}}
\newcommand{\curL}{{\mathcal{L}}}
\newcommand{\curM}{{\mathcal{M}}}
\newcommand{\curP}{{\mathcal{P}}}
\newcommand{\curT}{{\mathcal{T}}}
\newcommand{\curV}{{\mathcal{V}}}
\newcommand{\bfcurY}{{\boldsymbol{\mathcal{Y}}}}
\newcommand{\bfcurZ}{{\boldsymbol{\mathcal{Z}}}}
\newcommand{\ar}{{\hbox{{\ppppcarac ar}}}}
\newcommand{\ad}{{\hbox{{\ppcarac ad}}}}
\newcommand{\pad}{{\hbox{{\ppppcarac ad}}}}
\newcommand{\PCA}{{{\hbox{{\pppppcarac PCA}}}}}
\newcommand{\err}{\hbox{{\pcarac err}}}
\newcommand{\tr}{\hbox{{\pcarac Tr}}\,}
\newcommand{\simul}{{\hbox{{\ppppcarac sim}}}}
\newcommand{\DM}{{\hbox{{\pppppcarac DM}}}}
\newcommand{\opt}{{\hbox{{\pppcarac opt}}}}
\newcommand{\popt}{{\hbox{{\ppppcarac opt}}}}
\newcommand{\ppopt}{{\hbox{{\pppppcarac opt}}}}
\newcommand{\MC}{{\hbox{{\ppppcarac MC}}}}
\newcommand{\pMC}{{\hbox{{\pppppcarac MC}}}}
\newcommand{\st}{{\hbox{{\pppcarac st}}}}
\newcommand{\pmax}{{\hbox{{\pppcarac max}}}}
\newcommand{\psup}{{\hbox{{\pppcarac sup}}}}
\newcommand{\papprox}{{\hbox{{\pppcarac app}}}}
\newtheorem{theorem}{Theorem}
\newtheorem{definition}{Definition}
\newtheorem{lemma}{Lemma}
\newtheorem{proposition}{Proposition}
\newtheorem{corollary}{Corollary}
\newproof{proof}{Proof}
\newdefinition{remark}{Remark}
\newdefinition{hypothesis}{Hypothesis}
\newdefinition{notation}{Notation}
\journal{arXiv}
\begin{document}

\begin{frontmatter}

\title{Probabilistic Learning on Manifolds}

%\author{C. Soize\fnref{myfootnote1}}
%\address{Universit\'e Paris-Est Marne-la-Vall\'ee, Laboratoire Mod\'elisation et Simulation Multi-Echelle, MSME UMR 8208, 5 bd Descartes, 77454 Marne-la-Vall\'ee, France
%}
%\fntext[myfootnote1]{Corresponding author, christian.soize@u-pem.fr}

\author[1]{Christian SOIZE \corref{cor1}}
\ead{christian.soize@univ-eiffel.fr}
\author[2]{Roger GHANEM}
\ead{ghanem@usc.edu}
\cortext[cor1]{Corresponding author: C. Soize, christian.soize@univ-eiffel.fr}
\address[1]{Universit\'e Gustave Eiffel, Laboratoire Mod\'elisation et Simulation Multi-Echelle, MSME UMR 8208, 5 bd Descartes, 77454 Marne-la-Vall\'ee, France}
\address[2]{University of Southern California, Viterbi School of Engineering, 210 KAP Hall, Los Angeles, CA 90089, United States}

\begin{abstract}
 This paper presents mathematical results in support of the methodology of the probabilistic learning on manifolds (PLoM) recently introduced by the authors, which has been used with success for analyzing complex engineering systems.  The PLoM considers a given initial dataset constituted of a small number of points given in an Euclidean space, which are interpreted as independent realizations of a vector-valued random variable for which its non-Gaussian probability measure is unknown but is, \textit{a priori}, concentrated in an unknown  subset of the Euclidean space. The objective is to construct a learned dataset constituted of additional realizations that allow the evaluation of converged statistics.  A transport of the probability measure estimated with the initial dataset is done through a linear transformation  constructed using a reduced-order diffusion-maps basis. In this paper, it is proven that this transported measure is a marginal distribution of the invariant measure of a reduced-order It\^o stochastic differential equation that corresponds to a dissipative Hamiltonian dynamical system. This construction allows for preserving the concentration of the probability measure.  This property is shown by analyzing a distance between the random matrix constructed with the PLoM and the matrix representing the initial dataset, as a function of the dimension of the basis. It is further proven that this distance has a minimum for a dimension of the reduced-order diffusion-maps basis that is strictly smaller than the number of points in the initial dataset.   Finally, a brief numerical application illustrates the mathematical results.
\end{abstract}

\begin{keyword}
  Probabilistic learning \sep machine learning \sep manifolds \sep diffusion maps \sep measure concentration \sep data driven \sep sampling on manifolds \sep MCMC \sep dissipative Hamiltonian stochastic dynamics \sep supervised \sep unsupervised \sep
\end{keyword}

%\begin{AMS}
%   68Q32, 68T05, 62D05,62G07, 62G09, 62H12
%\end{AMS}
%
\end{frontmatter}
\section{Introduction}
\label{sec:1}
In this paper, mathematical results are presented for justifying and clarifying the methodology of  \textit{probabilistic learning on manifolds} (PLoM), initially  introduced in \cite{Soize2016}, completed in \cite{Soize2019b}, extended to the case of the polynomial chaos representation  \cite{Soize2017a}, to the case of  the learning in presence of physics constraints \cite{Soize2019c}, and  to the sampling of Bayesian posteriors \cite{Soize2019d}.
The PLoM has been used with success for the probabilistic nonconvex optimization under constraints and  uncertainties \cite{Ghanem2018a} that has allowed for analyzing very complex engineering systems such as the hypersonic combustion flows \cite{Ghanem2019,Soize2019}, the optimal placement of wells \cite{Ghanem2018b},  the design optimization under uncertainties of mesoscale implants \cite{Soize2018a}, the quantification of model uncertainties in nonlinear computational dynamics \cite{Soize2019a}, the fracture paths in random composites \cite{Guilleminot2019}.

The proposed PLoM, which can be viewed either as a supervised or an unsupervised machine learning, considers a given initial dataset constituted of $N$ given points $\bfeta_d^1,\ldots ,$ $\bfeta_d^N$ in $\RR^\nu$, which are interpreted as independent realizations of a $\RR^\nu$-valued random variable $\bfH$ for which its non-Gaussian probability measure $p_\bfH(\bfeta)\, d\bfeta$ on $\RR^\nu$ is unknown but is, \textit{a priori}, concentrated in an unknown  subset of $\RR^\nu$.
Denoting by $p_\bfH^{(N)}$ the nonparametric statistical estimation of $p_\bfH$,  the sequence of probability measures $\{p_\bfH^{(N)}(\bfeta)\, d\bfeta\}_N$ on $\RR^\nu$ is convergent to $p_\bfH(\bfeta)\, d\bfeta$ for $N\rightarrow +\infty$. In the PLoM, dimension $N$ is fixed and is presumed to be relatively small (case for which only small data are available in opposite to the big-data case). Nevertheless, it is assumed that $N$ is larger than some lower bound $N_0$ needed for
to be a sufficiently accurate estimate of $p_\bfH(\bfeta)\, d\bfeta$.  Let us now define the random vector $\bfH^{(N)}$ such that its probability measure is $p_\bfH^{(N)}(\bfeta)\, d\bfeta$. We  define the random matrix $[\bfH^N] = [\bfH^1\ldots\bfH^N]$ with values in $\MM_{\nu,N}$, whose columns $\bfH^1,\ldots, \bfH^N$ are $N$ independent copies of $\bfH^{(N)}$. The matrix $[\eta_d] = [\bfeta_d^1 \ldots \bfeta_d^N] \in\MM_{\nu,N}$ is then interpreted as one realization of random matrix $[\bfH^N]$. A reduced-order diffusion-maps basis $[g_m]\in \MM_{N,m}$ of order $m < N$ is introduced by the authors for constructing a $\MM_{\nu,N}$-valued reduced-order representation $[\bfH_m^N] = [\bfZ_m]\, [g_m]$ of random matrix $[\bfH^N]$. A MCMC generator of the random matrix  $[\bfZ_m]$ with values $\MM_{\nu,m}$ is explicitly constructed as a reduced-order  Itô stochastic differential equation (ISDE) associated with a dissipative Hamiltonian dynamical system. We then consider the family $\{p_{[\bfH_m^N]}([\eta])\, d[\eta]\}_{1\leq m \leq N}$ of probability measures on $\MM_{\nu,N}$ for which the reduced-order ISDE is the MCMC generator. We prove that there exists an optimal value $m_\opt < N$ such that the probability measure
$p_{[\bfH_{m_\ppopt}^N]}([\eta])\, d[\eta]$ allows for generating an arbitrary number $n_\MC \gg N$ of independent realizations of $[\bfH_{m_\ppopt}^N]$ (the learned dataset) in preserving the concentration of the measure. This property is shown by analyzing the function $m\mapsto d_N^2(m) = E\{\Vert [\bfH_m^N] - [\eta_d]\Vert^2\} /E\{\Vert \eta_d\Vert^2\}$, which is minimum for $m=m_\opt$ and such that  $d_N^2(m_\opt) \ll d_N^2(N)$. It should be noted that, for $m=N$,
the value $d_N^2(N)$ represents the distance of the random matrix $[\bfH^N_N]$ to the initial dataset $[\eta_d]$, for which the learned dataset would be generated without using the PLoM and consequently, would involve a scattering of the generated realizations corresponding to a loss of concentration.

In the formulation proposed and analyzed, $N$ is fixed. There is  \textit{a priori} no sense in studying the convergence of the distance for $N$ going towards infinity, on the one hand because $N$ has a limited value that is supposed to be rather small and on the other hand because $N$ also represents the number of columns of the random matrix $[H^N]$. However, for a fixed small value of $N$, the convergence of the probabilistic learning with respect to $N$ can be considered by introducing an ordered subset of integers $N_1 < N_2 < \ldots < N_{i_\pmax}$ with $N_1 > 1$ and $N_{i_\pmax} = N$, and by studying the convergence as a function of $N_i$ for $i=1,\ldots , i_\pmax$. If the convergence is reached  for $i \leq i_\pmax$, then the learning process is successful; if not, this means that the value of $N$ is too small and has to be increased, that is to say, by increasing the number of points in the initial dataset. This question is outside the scope of this paper and we refer the reader to the references given in the first paragraph of this introduction, references in which this question is dealt with.
\subsection{Framework and objective of the PLoM}
\label{sec:1.1}
Probabilistic learning is a way for improving the knowledge that one has from only a small number of expensive evaluations of a computational model in order to be able to solve, for instance, a nonconvex optimization problem under uncertainties with nonlinear constraints, for which a large number of expensive evaluations would be required. To that end, statistical and probabilistic learning methods have been extensively developed and play an increasingly important role in computational physics and engineering science. In large scale model-driven design optimization under uncertainty, and more generally, in artificial intelligence for extracting information from big data, statistical learning methods have been developed in the form of surrogate models that can  be evaluated such as, Gaussian process surrogate models, Bayesian calibration methods, active learning, and when large databases are available, neural networks, generative adversarial networks, etc.. All these methodologies allow for decreasing the numerical cost of the evaluations of expensive functions. This is particularly crucial for the evaluation of statistical estimates of large scale stochastic computational models. This is a major challenge that requires the use of suitable mathematical methods and algorithms. The probabilistic learning on manifold, which is analyzed in this paper,  is a contribution towards addressing this challenge.

In the framework of supervised machine learning, a typical problem for the use of the PLoM is the following. Let $(\bfw,\bfu)\mapsto \bff(\bfw,\bfu)$ be any measurable mapping on $\RR^{n_w}\times \RR^{n_u}$ with values in $\RR^{n_q}$ representing a computational model coming, for instance, from the discretization of a boundary value problem, in which $n_w$, $n_u$, and $n_q$ are any finite integers. Let $\bfW$ and $\bfU$ be two independent (non-Gaussian) random variables defined on a probability space $(\Theta,\curT,\curP)$ with values in $\RR^{n_w}$ and $\RR^{n_u}$, for which the probability measures $P_\bfW(d\bfw)= p_\bfW(\bfw)\, d\bfw$ and $P_\bfU(d\bfu) = p_\bfU(\bfu)\, d\bfu$ are defined by the probability density functions  $p_\bfW$ and $p_\bfU$ with respect to the Lebesgue measures $d\bfw$ and $d\bfu$ on $\RR^{n_w}$ and $\RR^{n_u}$.  Random vector $\bfW$ is made up of a part of the random parameters of the computational model, which are used for controlling the system, while random vector $\bfU$ is made up of the other part of these random parameters, which are not used for controlling the system. Let $\bfQ$ be the quantities of interest (QoI) that is a random variable defined on $(\Theta,\curT,\curP)$ with values in $\RR^{n_q}$ such that
\begin{equation} \label{eq:1.1}
\bfQ = \bff(\bfW,\bfU) \, .
\end{equation}
Let us assume that $N$ calculations have been performed with the computa\-tional model (the training) whose solution is represented by equation~\eqref{eq:1.1}, allowing $N$ independent realizations $\{\bfq^j,j=1,\ldots , N\}$ of $\bfQ$ to be computed such that
$\bfq^j = \bff(\bfw^j,\bfu^j)$, in which $\{\bfw^{j} , j=1,\ldots , N\}$ and $\{\bfu^{j} , j=1,\ldots , N\}$ are $N$ independent realizations of $(\bfW,\bfU)$, which have been generated using an adapted generator for $p_\bfW$ and $p_\bfU$.  We then consider the random variable $\bfX$ with values in $\RR^n$, such that
\begin{equation} \label{eq:1.2}
\bfX = (\bfQ,\bfW) \quad , \quad n= n_q+n_w \, .
\end{equation}
The probabilistic learning is performed for $\bfX$.
The initial dataset $D_N$ related to random vector $\bfX$ is then made up of the $N$ independent realizations
$\{\bfx^{j} , j=1,\ldots , N\}$ in which $\bfx^j = (\bfq^j,\bfw^j) \in \RR^n$.
In this paper, it is assumed that the measurable mapping $\bff$ is such that the non-Gaussian probability measure $P_\bfX(d\bfx)$ of $\bfX = (\bfQ,\bfW)$ admits a density $p_\bfX(\bfx)$ with respect to the Lebesgue measure $d\bfx$ on $\RR^n$.
The probability measure of $\bfX$ is unknown and is assumed to be concentrated in a subset of $\RR^n$ that is also unknown (this concentration property is due to equations \eqref{eq:1.1} and \eqref{eq:1.2}). The objective of the PLoM proposed in \cite{Soize2016} is to construct a probabilistic model of non-Gaussian random vector $\bfX$
using only initial dataset $D_N$, which allows for generating $\nu_\simul \gg N$ additional independent realizations $\{\bfx_\ar^1,\ldots,\bfx_\ar^{\nu_\simul}\}$ in $\RR^n$ of random vector $\bfX$, preserving the concentration of its probability measure and without using the computational model. It can then be deduced  $\nu_\simul$ additional realizations $\{(\bfq_\ar^\ell , \bfw_\ar^\ell) , \ell=1,\ldots , \nu_\simul\}$  that are such that
$(\bfq_\ar^\ell , \bfw_\ar^\ell) = \bfx_\ar^\ell$. These additional realizations allow, for instance, a cost function $\JJ(\bfw) = E\{J(\bfQ,\bfW)\vert\bfW=\bfw\}$ to be evaluated, in which $(\bfq,\bfw)\mapsto J(\bfq,\bfw)$ is a given measurable real-valued mapping on $\RR^{n_q}\times\RR^{n_w}$ as well as constraints related to a nonconvex optimization problem \cite{Ghanem2018a,Ghanem2018b,Soize2018a,Ghanem2019} and this, without calling the computational model.
The objective of the paper is to present mathematical developments of the  PLoM presented in \cite{Soize2016}, which allow for justifying the methodology and
providing foundation for further developments. The main steps of the PLoM methodology are not summarized but will be recalled as we will establish the mathematical properties.
\subsection{Organization of the paper and what are the main results}
\label{sec:1.2}
In Section~\ref{sec:2}, we introduce the $\RR^\nu$-valued random variable $\bfH$ resulting from the principal component analysis (PCA) of the $\RR^n$-valued random variable $\bfX$ with $\nu\leq n$.
Section~\ref{sec:3} is devoted to the nonparametric statistical estimate $p_\bfH^{(N)}$ of the pdf $p_\bfH$ of $\bfH$ and we give Theorem~\ref{theorem:3.1} concerning the consistency of the sequence   of estimators of $p_\bfH(\bfeta)$ for all $\bfeta$ fixed in $\RR^\nu$.
Section~\ref{sec:4} deal with the definition of random matrix $[\bfH^N]$ and  Proposition~\ref{proposition:4.3} gives an explicit expression of the pdf $p_{[\bfH^N]}$ of random matrix $[\bfH^N]$.
In Section~\ref{sec:5}, we present the construction of the reduced-order diffusion-maps basis $[g_m]$ that is used by the PLoM method and we introduce the estimation of the optimal values $\varepsilon_\opt$ and $m_\opt$ of the hyperparameter $\varepsilon_\DM$ and of the reduced order $m$. In particular, we compare the hyperparameter $\varepsilon_\DM$ and the modified Silverman bandwidth $\widehat s$; we conclude that the invariant probability measure $p^{\varepsilon_\DM}(i)$ of the Markov chain allowing the diffusion-maps basis to be constructed is different from the probability measure $p_\bfH^{(N)}(\bfeta)\, d\bfeta$ of random vector $\bfH^{(N)}$ that is considered by the PLoM.
Section~\ref{sec:6} is devoted to the construction of the probability measure and its generator related to the probabilistic learning on manifolds. We introduce the reduced-order representation $[\bfH^N_m] = [\bfZ_m]\, [g_m]^T$ of random matrix $[\bfH^N]$. In a first central Theorem~\ref{theorem:6.6}, we prove that the transported probability measure $p_{[\bfZ_m]}([z])\, d[z]$ of random matrix $[\bfZ_m]$ is the marginal distribution of the invariant measure of the reduced-order ISDE that is used as the MCMC generator of random matrix $[\bfZ_m]$. We also prove in Proposition~\ref{proposition:6.7} that $p_{[\bfZ_m]}$ has a "Gaussian representation", which is a linear combination of $N^N$ products of $\nu$ Gaussian pdf on $\RR^N$. Consequently,  the use of Theorem~\ref{theorem:6.6} effectively allows realizations of random matrix $[\bfZ_m]$ to be generated, while a Gaussian generator that would be based on the Gaussian representation is unthinkable for $N > 10$, for instance.
Section~\ref{sec:7} deals with the square of the relative distance $d_N^2(m)$ of random matrix $[\bfH_m^N]$ to matrix $[\eta_d]$ of the initial dataset. This distance allows for quantifying, as a function of $m$,  the concentration of the measure $p_{[\bfH^N_m]} ([\eta])\, d[\eta]$ in the subset of $\MM_{\nu,N}$ where the initial dataset (represented by $[\eta_d]$) is located. We show that the usual MCMC generator of random matrix $[\bfH^N]$ corresponding to $m=N$, yields $d_N^2(N) \simeq 2$ (see Lemma~\ref{lemma:7.2}), and induces a loss of concentration of the probability measure. Under a "reasonable hypothesis", the second central Theorem~\ref{theorem:7.13} proves that $d_N^2(m_\opt) \ll d_N^2(N)$ in which  $m_\opt < N$ is the optimal value of $m$. This result demonstrates that the PLoM method is a better method than the usual one because it keeps the concentration of the measure.
In Section~\ref{sec:8}, we present  a justification of the  hypothesis introduced in Theorem~\ref{theorem:7.13}, based on the use of the maximum entropy principle from Information Theory.
Section~\ref{sec:9} is devoted to a brief numerical application that illustrates the mathematical results.
The conclusions follow in Section~\ref{sec:10}.
\section*{Notations}
The following notations are used:\\
$x$: lower-case Latin of Greek letters are deterministic real variables.\\
$\bfx$: boldface lower-case Latin of Greek letters are deterministic vectors.\\
$X$: upper-case Latin or Greek letters are real-valued random variables.\\
$\bfX$: boldface upper-case Latin or Greek letters are vector-valued random variables.\\
$[x]$: lower-case Latin of Greek letters between brackets are deterministic matrices.\\
$[\bfX]$: boldface upper-case letters between brackets are matrix-valued random variables.\\
$\NN$, $\RR$: set of all the integers $\{0,1,2,\ldots\}$, set of all the real numbers.\\
$\RR^n$: Euclidean vector space on $\RR$ of dimension $n$.\\
$\bfx = (x_1,\ldots,x_n)$: point in $\RR^n$.\\
$<\! \bfx,\bfy \!> = x_1 y_1 + \ldots + x_n y_n$: inner product in $\RR^n$.\\
$\Vert\bfx\Vert$:  norm in $\RR^n$ such that $\Vert\bfx\Vert^2 =<\! \bfx,\bfx \!>$.\\
$\MM_{n,m}$: set of all the $(n\times m)$ real matrices.\\
$\MM_n$: set of all the square $(n\times n)$ real matrices.\\
$\MM_n^{+0}$: set of all the positive symmetric $(n\times n)$ real matrices.\\
$\MM_n^+$: set of all the positive-definite symmetric $(n\times n)$ real matrices.\\
$\delta_{kk'}$: Kronecker's symbol.\\
$\delta_{0_\nu}$ and $\delta_{0_{\MM_{\nu,N}}}$: Dirac measure at the origin of $\RR^\nu$ and of $\MM_{\nu,N}$.\\
$[I_{n}]$: identity matrix in $\MM_n$.\\
$[x]^T$: transpose of matrix $[x]$.\\
$\tr \{[x]\}$: trace of the square matrix $[x]$.\\
$< [x] , [y] >_F = \tr \{[x]^T \, [y]\}$, inner product of matrices $[x]$ and $[y]$ in $\MM_{n,m}$.\\
$\Vert x \Vert$ or $\Vert\, [x]\, \Vert$: Frobenius norm of matrix  $[x]$ such that  $\Vert x \Vert^2 = <  [x] , [x]>_F$.\\
$E$: mathematical expectation.\\
\section{De-correlation and normalization of random vector $\bfX$ by PCA}
\label{sec:2}
In practice, initial dataset $D_N$ results from a scaling of the available data. Then, a principal component analysis (PCA) is carried out in order to statistically condition the scaled initial dataset $D_N$ through de-correlation and normalization. Let $\underline{\widehat\bfx}\in\RR^n$ and
$[\widehat C]\in\MM_n^{+0}$ be the classical empirical estimates of the mean vector and the covariance matrix of $\bfX$, constructed using $D_N$ (scaled).
Let $[\widehat\mu]\in\MM_\nu$ be the diagonal  matrix of the first $\nu$ eigenvalues $\widehat\mu_1\geq \widehat\mu_2\geq \ldots \geq \widehat\mu_\nu > 0$ of $[\widehat C]$ and let $[\widehat\Phi] \in\MM_{n,\nu}$ be the matrix of the associated orthonormal eigenvectors. For any $\varepsilon > 0$ fixed,  $\nu \leq n$ is chosen such that
$\err_\PCA(\nu) = 1- (\widehat\mu_1 +\ldots +\widehat\mu_\nu)/(\tr[\widehat C]) \leq \varepsilon$.
This PCA allows for representing $\bfX$ by
\begin{equation} \label{eq:2.1}
\bfX^\nu = \underline{\widehat\bfx} +[\widehat\Phi]\,[\widehat\mu]^{1/2}\, \bfH \quad , \quad
E\{\Vert \bfX-\bfX^\nu\Vert^2 \} \leq \varepsilon \, E\{ \Vert\bfX\Vert^2\}\, .
\end{equation}
Throughout this paper, it will be assumed that $\nu < N$.
The $N$ independent realizations $\{\bfeta_d^j, j=1,\ldots , N\}$ of the second-order $\RR^\nu$-valued random variable $\bfH$ defined on probability space $(\Theta,\curT,\curP)$ are such that
\begin{equation}  \label{eq:2.2}
\bfeta_d^j = [\widehat\mu]^{-1/2}\, [\widehat\Phi]^T\,(\bfx^j -\underline{\widehat\bfx})\in\RR^\nu \quad , \quad j=1,\ldots , N\, .
\end{equation}
Let $[\eta_d] = [ \bfeta_d^1 \ldots \bfeta_d^N]  \in\MM_{\nu,N}$ be the  matrix of the $N$ realizations of $\bfH$.
The empirical estimates $\bfm_N\in\RR^\nu$ and $[C_N]\in\MM_\nu^+$ of the mean vector and the covariance matrix of $\bfH$,  are such that
\begin{equation} \label{eq:2.3}
\bfm_N= \frac{1}{N} \sum_{j=1}^N \bfeta_d^j = 0_\nu \quad , \quad
[C_N] = \frac{1}{N-1} [\eta_d]\,[\eta_d]^T = [I_\nu] \, .
\end{equation}
It can be seen that the Frobenius norm $\Vert\eta_d\Vert$ of matrix $[\eta_d]\in\MM_{\nu,N}$ is such that
\begin{equation} \label{eq:2.4}
\Vert\eta_d\Vert^2 = \tr\{[\eta_d]^T\, [\eta_d]\} = \sum_{j=1}^N \Vert\bfeta_d^j\Vert^2 = \nu(N-1)\, .
\end{equation}
\section{Nonparametric estimate of the pdf of $\bfH$}
\label{sec:3}
As proposed in \cite{Soize2015,Soize2016}, the modification of the multidimensional
Gaussian kernel-density estimation method \cite{Duong2005,Duong2008,Filippone2011,Zougab2014}
is used for constructing the estimation $p_\bfH^{(N)}$ on $\RR^\nu$ of the pdf $p_\bfH$ of random vector $\bfH$, which is written,
$\forall\bfeta\in\RR^\nu$, as
\begin{equation} \label{eq:3.1}
p_\bfH^{(N)}(\bfeta) = \frac{1}{N} \sum_{j=1}^N \, \pi_{\nu,N}(\frac{\widehat s}{s} \, \bfeta_d^j -\bfeta)
\quad , \quad \pi_{\nu,N}(\bfeta)  = \frac{1}{(\sqrt{2\pi}\,\widehat s)^\nu}\, \exp\{-\frac{1}{2\widehat s^2}\Vert\bfeta\Vert^2\}\, ,
\end{equation}
\begin{equation} \label{eq:3.2}
s = \left(\frac{4}{N(\nu+2)}\right )^{{1}/{(\nu+4)}}\quad , \quad
\widehat s =   \frac{s}{\sqrt{s^2 +\frac{N -1}{N}}} \,  ,
\end{equation}
in which $s$ is the usual Silverman bandwidth (since $[C_N] = [I_\nu]$) (see for instance, \cite{Bowman1997}) and where $\widehat s$ has been introduced in order that
$\int_{\RR^\nu} \bfeta\, p_\bfH^{(N)}(\bfeta)\, d\bfeta = 0_\nu$ and that
$\int_{\RR^\nu} \bfeta\otimes \bfeta\, p_\bfH^{(N)}(\bfeta)\, d\bfeta = [I_\nu]$,
because, in the framework of the PLoM, we need to preserve the centering and the orthogonality property. Finally, for fixed $\nu$,
\begin{equation}\label{eq:3.3}
\hbox{if}\quad  N \rightarrow +\infty\, , \quad \hbox{then} \quad s\rightarrow 0\, , \quad\widehat s\rightarrow 0\, ,\quad \frac{\widehat s}{s} \rightarrow 1\, , \quad \frac{ s}{\widehat s} \rightarrow 1 \, .
\end{equation}
\begin{theorem}[\textbf{Sequence of estimators of} $p_\bfH(\bfeta)$ \cite{Parzen1962}]\label{theorem:3.1}
  Let us assume that $p_\bfH$ is continuous on $\RR^\nu$. For $\nu$ fixed and for $\bfeta$ given in $\RR^\nu$, let $\{P^{(N)}(\bfeta)\}_N$ be the sequence of estimators of $p_\bfH(\bfeta)$ for which
  $P^{(N)}(\bfeta) = \frac{1}{N} \sum_{j=1}^N \pi_{\nu,N}(\frac{\widehat s}{s}\widehat\bfH^j -\bfeta)$ is a
  positive-valued random variable where $\widehat\bfH^1,\ldots , \widehat\bfH^N$ are $N$ independent copies of $\bfH$. Thus, $\forall\bfeta\in\RR^n$,
  the mean value $\underline{P}^{(N)}(\bfeta) = E\{P^{(N)}(\bfeta)\}$ and the variance $\hbox{Var}\{P^{(N)}(\bfeta)\} = E\{(P^{(N)}(\bfeta)-\underline{P}^{(N)}(\bfeta))^2\}$ of $P^{(N)}(\bfeta)$ are such that
   $\lim_{N\rightarrow +\infty} \underline{P}^{(N)}(\bfeta) = p_\bfH(\bfeta)$
   and
   $\hbox{Var}\{P^{(N)}(\bfeta)\} \leq N^{-4/(\nu+4)}\, \beta_{\nu,N}\, \underline{P}^{(N)}(\bfeta)$,
    in which, for $\nu$ fixed and for $N \rightarrow +\infty$, the positive constant $\beta_{\nu,N}$  is such that
    $\beta_{\nu,N} \sim (2\pi)^{-\nu/2}\, ( (2+\nu)/4 )^{\nu/(\nu+4)}$.
\end{theorem}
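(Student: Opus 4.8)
The plan is to treat this as the classical consistency analysis of a Parzen kernel density estimator, handling the mean (bias) and the variance separately, the only nonstandard features being the rescaling factor $\widehat s/s$ and the modified bandwidth $\widehat s$. Throughout I would exploit that $\widehat{\bfH}^1,\ldots,\widehat{\bfH}^N$ are i.i.d.\ with density $p_\bfH$, so that $\underline{P}^{(N)}(\bfeta)=E\{\pi_{\nu,N}(\tfrac{\widehat s}{s}\widehat{\bfH}^1-\bfeta)\}=\int_{\RR^\nu}\pi_{\nu,N}(\tfrac{\widehat s}{s}\bfzeta-\bfeta)\,p_\bfH(\bfzeta)\,d\bfzeta$, and similarly $\hbox{Var}\{P^{(N)}(\bfeta)\}=\tfrac1N\hbox{Var}\{\pi_{\nu,N}(\tfrac{\widehat s}{s}\widehat{\bfH}^1-\bfeta)\}$ by independence.

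For the mean, I would first rewrite the integrand as a Gaussian density in the integration variable $\bfzeta$. Setting $a=\widehat s/s$ and using $a^2/\widehat s^2=1/s^2$, one checks that $\pi_{\nu,N}(a\bfzeta-\bfeta)=a^{-\nu}\,(\sqrt{2\pi}\,s)^{-\nu}\exp\{-\tfrac{1}{2s^2}\Vert\bfzeta-\bfeta/a\Vert^2\}=a^{-\nu}\varphi_s(\bfzeta-\bfeta/a)$, where $\varphi_s$ is the centered Gaussian density of bandwidth $s$ on $\RR^\nu$, so that $\underline{P}^{(N)}(\bfeta)=a^{-\nu}\,(\varphi_s * p_\bfH)(\bfeta/a)$. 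As $N\to+\infty$, \eqref{eq:3.3} gives $s\to 0$, $a\to 1$, and $\bfeta/a\to\bfeta$; since $\varphi_s$ is a nonnegative approximate identity concentrating at the origin with $\int\varphi_s=1$, continuity of $p_\bfH$ at $\bfeta$ yields $(\varphi_s*p_\bfH)(\bfeta/a)\to p_\bfH(\bfeta)$ and hence $\underline{P}^{(N)}(\bfeta)\to p_\bfH(\bfeta)$. Because $p_\bfH$ is only assumed continuous (not bounded), I would make this rigorous by Parzen's lemma: split $\int\varphi_s(\bfu)\,[\,p_\bfH(\bfeta/a-\bfu)-p_\bfH(\bfeta)\,]\,d\bfu$ into a neighbourhood of the origin, controlled by continuity, and its complement, controlled by the integrability of $p_\bfH$ together with the super-polynomial decay of the Gaussian kernel.

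For the variance, by independence it suffices to bound $\tfrac1N E\{\pi_{\nu,N}(a\widehat{\bfH}^1-\bfeta)^2\}$. The key observation is that the Gaussian $\pi_{\nu,N}$ attains its maximum at the origin, so $\pi_{\nu,N}(\bfu)^2\le \pi_{\nu,N}(0_\nu)\,\pi_{\nu,N}(\bfu)$ pointwise, with $\pi_{\nu,N}(0_\nu)=(\sqrt{2\pi}\,\widehat s)^{-\nu}$. Taking expectations gives $E\{\pi_{\nu,N}(a\widehat{\bfH}^1-\bfeta)^2\}\le (\sqrt{2\pi}\,\widehat s)^{-\nu}\,\underline{P}^{(N)}(\bfeta)$, whence $\hbox{Var}\{P^{(N)}(\bfeta)\}\le \tfrac{1}{N(\sqrt{2\pi}\,\widehat s)^\nu}\,\underline{P}^{(N)}(\bfeta)$. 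Pulling out the factor $N^{-4/(\nu+4)}$ and using $1-4/(\nu+4)=\nu/(\nu+4)$, this is exactly the claimed bound with $\beta_{\nu,N}=\big(N^{\nu/(\nu+4)}(\sqrt{2\pi}\,\widehat s)^\nu\big)^{-1}$, a positive constant independent of $\bfeta$.

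Finally, I would compute the asymptotics of $\beta_{\nu,N}$. Writing $\beta_{\nu,N}=(2\pi)^{-\nu/2}\,(N^{1/(\nu+4)}\widehat s)^{-\nu}$ and using $\widehat s\sim s$ from \eqref{eq:3.3}, the definition of $s$ in \eqref{eq:3.2} gives $N^{1/(\nu+4)}\,s=(4/(\nu+2))^{1/(\nu+4)}$, so $(N^{1/(\nu+4)}\widehat s)^\nu\to(4/(\nu+2))^{\nu/(\nu+4)}$ and therefore $\beta_{\nu,N}\sim(2\pi)^{-\nu/2}\,((2+\nu)/4)^{\nu/(\nu+4)}$, as stated. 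The only genuinely delicate step is the mean's approximate-identity argument: one must justify passing to the limit inside the integral at a moving evaluation point $\bfeta/a$ under the sole hypothesis that $p_\bfH$ is continuous, which is precisely where the Gaussian tail decay (Parzen's conditions) is needed; the variance bound and the asymptotics of $\beta_{\nu,N}$ are then elementary.
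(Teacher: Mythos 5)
Your proposal is correct and follows essentially the same route as the paper's proof: the mean is handled by rewriting $\pi_{\nu,N}(\tfrac{\widehat s}{s}\,\cdot\,-\bfeta)$ as a Gaussian approximate identity of bandwidth $s$ evaluated at the shifted point $\tfrac{s}{\widehat s}\bfeta$, and the variance is bounded by $\tfrac{1}{N}\sup\pi_{\nu,N}\cdot\underline{P}^{(N)}(\bfeta)=\tfrac{1}{N}(\sqrt{2\pi}\,\widehat s)^{-\nu}\underline{P}^{(N)}(\bfeta)$, with your constant $\beta_{\nu,N}=\bigl(N^{\nu/(\nu+4)}(\sqrt{2\pi}\,\widehat s)^{\nu}\bigr)^{-1}$ algebraically identical to the paper's and having the same asymptotics. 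Your only departure is that you make explicit the approximate-identity limit at the moving evaluation point (splitting into a neighbourhood of the origin plus a Gaussian tail), which is a more careful justification of the step the paper compresses into weak convergence of the kernel measures to $\delta_{0_\nu}$.
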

\begin{proof}
The proof, inspired from \cite{Parzen1962}, is adapted to the modification $\widehat s \ne s$ used for defining the estimator.  Since $p_\bfH$ is assumed to be a continuous function, $\forall\bfeta\in\RR^\nu$,  $p_\bfH(\bfeta) = E\{\delta_{0_\nu}(\bfH-\bfeta)\}$. Using the second equation \eqref{eq:3.1}, for all $\widetilde\bfeta$ and $\bfeta$ in $\RR^\nu$, we have
$ \pi_{\nu,N}(\frac{\widehat s}{s} \widetilde\bfeta - \bfeta)\, d\widetilde\bfeta
     =  ( s / \widehat s )^\nu \,(\sqrt{2\pi}\, s )^{-\nu}\,
     \exp\{-\frac{1}{2s^2} \, \Vert \widetilde\bfeta - \frac{s}{\widehat s} \bfeta\Vert^2\} \, d\widetilde\bfeta$.
Using equation \eqref{eq:3.3} yields the following equality in the space of the bounded measures on $\RR^\nu$,
$\lim_{N\rightarrow +\infty} \pi_{\nu,N}(\frac{\widehat s}{s} \widetilde\bfeta - \bfeta)\, d\widetilde\bfeta
=  \delta_{0_\nu}(\widetilde\bfeta -\bfeta)$.
Since $\widehat\bfH^1,\ldots,\widehat\bfH^N$ are independent copies of $\bfH$, we have
$\underline{P}^{(N)}(\bfeta) = E\{\pi_{\nu,N}(\frac{\widehat s}{s} \bfH - \bfeta)\} =
\int_{\RR^\nu} \pi_{\nu,N}(\frac{\widehat s}{s} \widetilde\bfeta - \bfeta)\, p_\bfH( \widetilde\bfeta) \, d \widetilde\bfeta$.
Using the two last above equations yields the expression for the mean. Similarly, 
$E\{(P^{(N)}(\bfeta))^2\}= \frac{1}{N}\, E\{(\pi_{\nu,N}(\frac{\widehat s}{s} \bfH - \bfeta))^2\}
+(1-\frac{1}{N})\, (\underline{P}^{(N)}(\bfeta))^2$.
Consequently,
$\hbox{Var}\{P^{(N)}(\bfeta)\}  = \frac{1}{N}\, E\{(\pi_{\nu,N}(\frac{\widehat s}{s} \bfH - \bfeta))^2\}
                                                       -\frac{1}{N}\, (\underline{P}^{(N)}(\bfeta))^2
                               \leq \frac{1}{N}\, E\{(\pi_{\nu,N}(\frac{\widehat s}{s} \bfH $ $- \bfeta))^2\}$
$\leq \frac{1}{N}\, \left (\sup_{\widetilde\bfeta} \pi_{\nu,N}(\frac{\widehat s}{s}\widetilde\bfeta - \bfeta)\right )\,$ $E\{\pi_{\nu,N}(\frac{\widehat s}{s} \bfH - \bfeta) \}
                               =  \frac{1}{N}\,(\sqrt{2\pi}\, \widehat s )^{-\nu}\,\underline{P}^{(N)}(\bfeta)$.
From equation \eqref{eq:3.2} and the last inequality yield the expression for the variance in which
$\beta_{\nu,N} =(2\pi)^{-\nu/2}$ $\{ (2+\nu)/4 \}^{\nu/(\nu+4)} \, \{1-1/N\}^{\nu/2}\,
\{ 1 + (4/(2+\nu))^{2/(\nu+4)} N^{-2/(\nu+4)}(1-1/N)^{-1} \}^{\nu/2}$
that is the expression given in the theorem for $N$ sufficiently large.
\end{proof}
\begin{remark}[\textbf{Properties of the sequence of estimators of} $p_\bfH(\bfeta)$]\label{remark:3.1}
    Theorem~\ref{theorem:3.1} shows that estimator $P^{(N)}(\bfeta)$ is asymptotically unbiased. Since $\forall \bfeta\in\RR^\nu$,
    $E\{(P^{(N)}(\bfeta)-p_\bfH(\bfeta))^2\} =  \hbox{Var}\{P^{(N)}(\bfeta)\} + ( \underline{P}^{(N)}(\bfeta)- p_\bfH(\bfeta) )^2$,
    we have $\lim_{N\rightarrow +\infty} E\{(P^{(N)}(\bfeta)-p_\bfH(\bfeta))^2\} =  0$, which shows that estimator
    $P^{(N)}(\bfeta)$ is consistent. This mean-square convergence  implies the convergence in probability.
\end{remark}
\section{Definition of the random matrix $[\bfH^N]$ and its pdf}
\label{sec:4}
The introduction of a $(\nu\times N)$ random matrix $[\bfH^N]$ will allow the initial dataset to be represented using the diffusion-maps basis.
\begin{definition}[\textbf{Matrices} ${\relax [\eta]}$, ${\relax [\eta_d]}$, ${\relax [\eta_d(\bfj)]}$, \textbf{and set} $\curJ$ ] \label{definition:4.1}
Let $[\eta]$ be any matrix in $\MM_{\nu,N}$ that is written as
\begin{equation}\label{eq:4.1}
  [\eta] = [\bfeta^1 \ldots  \bfeta^N]\in \MM_{\nu,N} \quad , \quad \bfeta^\ell = (\eta_1^\ell,\ldots, \eta_\nu^\ell)\in \RR^\nu \,\, , \,\, \ell=1,\ldots , N\, ,
\end{equation}
and let $d[\eta] = \otimes_{\ell=1}^N d\bfeta^\ell$ be the measure on $\MM_{\nu,N}$ induced by the Lebesgue measures $d\bfeta^1,\ldots , d\bfeta^N$ on $\RR^\nu$. Let $[\eta_d]\in \MM_{\nu,N}$ be the matrix constructed using the $N$ points $\bfeta^j\in \RR^\nu$ defined by equation \eqref{eq:2.2},
\begin{equation}\label{eq:4.2}
  [\eta_d] = [\bfeta_d^1 \ldots  \bfeta_d^N]\in \MM_{\nu,N} \,\, , \,\, \bfeta_d^j = (\eta_{d,1}^j,\ldots, \eta_{d,\nu}^j)\in \RR^\nu \,\, , \,\, j=1,\ldots , N\, .
\end{equation}
Let $\,\bfj = (j_1,\ldots, j_N)\in\curJ$ be the multi-index of dimension $N$ with $\curJ\! = \!\{1,2,\ldots , N\}^N$ $\subset \NN^N$.
For all $\bfj$ in $\curJ$, the matrix $[\eta_d(\bfj)]\in\MM_{\nu,N}$ is defined by
\begin{equation}\label{eq:4.3}
  [\eta_d(\bfj)]_{k\ell} = \eta_{d,k}^{j_\ell} \quad , \quad  k=1,\ldots , \nu \quad , \quad  \ell=1,\ldots , N\, .
\end{equation}
Finally, we will use the following notation, $\sum_{\bfj\in\curJ} = \sum_{j_1=1}^N\ldots \sum_{j_N=1}^N$.
\end{definition}
Note that matrix $[\eta_d]$ defined by equation \eqref{eq:4.2} has to carefully be distinguished from matrix $[\eta_d(\bfj)]$ defined
by equation \eqref{eq:4.3}. Nevertheless, it can be seen that for $\bfj_0 = (1,2,\ldots ,N)\in\curJ$, we have
$[\eta_d(\bfj_0)] = [\eta_d]$.
\begin{definition}[\textbf{Random matrix} ${\relax [\bfH^N]}$] \label{definition:4.2}
Let $\bfH^{(N)}$ be the $\RR^\nu$-valued random variable defined on $(\Theta,\curT,\curP)$ for which the pdf is $p_\bfH^{(N)}$ defined by equations \eqref{eq:3.1} and \eqref{eq:3.2}. We then define the random matrix  $[\bfH^N]$ with values in $\MM_{\nu,N}$ such that $[\bfH^N] = [\bfH^1\ldots \bfH^N]$ in which $\bfH^1, \ldots , \bfH^N$ are $N$ independent copies of $\bfH^{(N)}$. From \eqref{sec:3}, it can be seen that $E\{\bfH^{(N)}\} = 0_\nu$ and that $E\{\bfH^{(N)}\otimes \bfH^{(N)}\} =[I_\nu]$.
\end{definition}
Note that in Definition~\ref{definition:4.2}, $\bfH^1, \ldots , \bfH^N$ are not taken as $N$ independent copies of $\bfH$ whose pdf $p_\bfH$ is unknown, but are taken as $N$ independent copies of $\bfH^{(N)}$ whose pdf $p_\bfH^{(N)}$ is known.
\begin{proposition}[\textbf{Probability density function of random matrix} ${\relax [\bfH^N]}$] \label{proposition:4.3}
The probability measure of random matrix $[\bfH^N]$ with values in $\MM_{\nu,N}$ admits the following density $[\eta]\mapsto p_{[\bfH^N]}([\eta])$ on  $\MM_{\nu,N}$  with respect to $d[\eta]$,
\begin{equation}\label{eq:4.4}
p_{[\bfH^N]}([\eta])  = \prod_{\ell=1}^N    \{ \frac{1}{N} \sum_{j=1}^N \frac{1}{(\sqrt{2\pi}\,\widehat s)^\nu}\,
                       \exp\{-\frac{1}{2\widehat s^2}\Vert\frac{\widehat s}{s}\bfeta_d^j - \bfeta^\ell\Vert^2\}    \} \, .
\end{equation}
\end{proposition}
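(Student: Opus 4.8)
The plan is to exploit directly the structure of $[\bfH^N]$ as a matrix whose $N$ columns are \emph{mutually independent} copies of $\bfH^{(N)}$, as stated in Definition~\ref{definition:4.2}. Recall from Definition~\ref{definition:4.1} that the reference measure on $\MM_{\nu,N}$ is the product measure $d[\eta] = \otimes_{\ell=1}^N d\bfeta^\ell$ induced by the Lebesgue measures on the $N$ copies of $\RR^\nu$. Since the columns $\bfH^1,\ldots,\bfH^N$ are independent, the joint probability measure of $[\bfH^N]$ is the product of the individual column distributions, and therefore its density with respect to $d[\eta]$ factorizes as the product of the marginal densities of the columns. Because each column has the common density $p_\bfH^{(N)}$, I would write, for $[\eta]=[\bfeta^1\ldots\bfeta^N]$,
\[
p_{[\bfH^N]}([\eta]) = \prod_{\ell=1}^N p_\bfH^{(N)}(\bfeta^\ell)\, .
\]

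The second and final step is purely substitutional. I would insert into each factor the explicit expression of $p_\bfH^{(N)}$ given by the first equation in~\eqref{eq:3.1}, namely $p_\bfH^{(N)}(\bfeta^\ell)=\frac{1}{N}\sum_{j=1}^N \pi_{\nu,N}(\frac{\widehat s}{s}\bfeta_d^j-\bfeta^\ell)$, and then replace the kernel $\pi_{\nu,N}$ by its Gaussian form, also supplied in~\eqref{eq:3.1}. This turns each factor into $\frac{1}{N}\sum_{j=1}^N (\sqrt{2\pi}\,\widehat s)^{-\nu}\exp\{-\frac{1}{2\widehat s^2}\Vert\frac{\widehat s}{s}\bfeta_d^j-\bfeta^\ell\Vert^2\}$, so that taking the product over $\ell=1,\ldots,N$ yields precisely the right-hand side of~\eqref{eq:4.4}.

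I do not expect any genuine obstacle: the result is an immediate consequence of the independence of the columns together with the elementary fact that the density of a random object with independent components is the product of the marginal densities. The only two points worth verifying explicitly are that the reference measure $d[\eta]$ is indeed the product measure (which is built into Definition~\ref{definition:4.1}) and that each column shares the same density $p_\bfH^{(N)}$ (guaranteed by Definition~\ref{definition:4.2}); both hold by construction, so the proposition follows directly.
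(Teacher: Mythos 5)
Your proposal is correct and follows essentially the same route as the paper's own proof: factor the joint density as $p_{[\bfH^N]}([\eta]) = \prod_{\ell=1}^N p_\bfH^{(N)}(\bfeta^\ell)$ via the independence of the columns stated in Definition~\ref{definition:4.2}, then substitute the explicit kernel expression from equation~\eqref{eq:3.1}. The paper's proof is just a one-line version of this; your explicit checks that $d[\eta]$ is the product reference measure and that all columns share the density $p_\bfH^{(N)}$ are the same ingredients, made visible.
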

\begin{proof}
Using Definition~\ref{definition:4.2} yields, $\forall \,[\eta]\in\MM_{\nu,N}$,
$p_{[\bfH^N]}([\eta]) = \Pi_{\ell=1}^N \, p_\bfH^{(N)}(\bfeta^\ell)$ and using
 equation \eqref{eq:3.1} yields equation \eqref{eq:4.4}.
\end{proof}
\section{Construction of a reduced-order diffusion-maps basis}
\label{sec:5}
To identify the subset around which the initial data are concentrated, the PLoM method \cite{Soize2016,Soize2019b} relies on the diffusion-maps method \cite{Coifman2005,Coifman2006,Coifman2006b,Lafon2006}. We use the Gaussian kernel such that, for all $\bfeta$ and $\bfeta'$ in $\RR^\nu$,
$k_{\varepsilon_\DM}( \bfeta,\bfeta')= \exp\{-(4\,\varepsilon_\DM)^{-1} \Vert\bfeta-\bfeta'\Vert^2\}$ in which $\varepsilon_\DM >0$. The
matrices $[K]$  and  $[b]$ are defined,  for all $i$ and $j$ in $\{1,\ldots , N\}$, by
$[K]_{ij} = \exp\{-(4\,\varepsilon_\DM)^{-1} \Vert\bfeta_d^i-\bfeta_d^j\Vert^2\}$ and
$[b]_{ij} = \delta_{ij} \, b_i$ with $b_i = \sum_{j'=1}^N [K]_{ij'}$.
It is assumed that $[\eta_d]$ is such that $[K]\in\MM^+_N$. Hence, the diagonal matrix $[b]$ belongs to $\MM^+_N$. Let $\PP = [b]^{-1}[K]\in\MM_N$ be the non symmetric  matrix with positive entries such that  $\sum_{j} [\PP]_{ij} =1$ for all $i$. Matrix $[\PP]$ is the transition matrix of a Markov chain that yields the probability of transition in one step.
\subsection{Diffusion-maps basis as a non orthogonal vector basis in $\RR^N$}
\label{sec:5.1}
The eigenvalues $\lambda_1,\ldots,\lambda_N$ and the associated eigenvectors $\bfpsi^1,\ldots,\bfpsi^N$ of the right-eigen\-value problem
$[\PP]\, \bfpsi^\alpha = \lambda_\alpha\, \bfpsi^\alpha$ are such that
$ 1=\lambda_1 > \lambda_2 \geq \ldots \geq \lambda_N $
and can be computed by solving the generalized eigenvalue problem
$[K]\, \bfpsi^\alpha = \lambda_\alpha\, [b]\,\bfpsi^\alpha$ with the normalization
$<\![b]\,\bfpsi^\alpha,\bfpsi^\beta\!> =\delta_{\alpha\beta}$.
The eigenvector $\bfpsi^1$ associated with $\lambda_1=1$ is a constant vector that can be written as
$\bfpsi^1 =N^{-1/2}  \Vert\bfpsi^1\Vert\,\bfun$ with $\bfun= (1,\ldots,1)\in\RR^N$.
\begin{definition}[\textbf{Reduced-order diffusion-maps basis} ${\relax{[g_m]}}$ \textbf{of order} $m$]\label{definition:5.1}
For a given integer $\kappa \geq 0$,  the diffusion-maps basis $\{\bfg^1,\ldots,\bfg^\alpha,\ldots, \bfg^N\}$ is a vector basis of $\RR^N$ defined by $\bfg^\alpha = \lambda^\kappa_\alpha\,\bfpsi^\alpha$ such that
$<\![b]\,\bfg^\alpha,\bfg^\beta\!> =\lambda_\alpha^{2\kappa}\,\delta_{\alpha\beta}$.
For a given integer $m$ with $2 < m \leq N$, we define the reduced-order diffusion-maps basis of order $m$ as the family
$\{\bfg^1,\ldots,\bfg^m\}$ that we  represent by the matrix $[g_m] = [\bfg^1 \ldots \bfg^m]\in\MM_{N,m}$ with
$\bfg^\alpha = (g_1^\alpha,\ldots ,g_N^\alpha)$ and $[g_m]_{\ell\alpha} = g_\ell^\alpha$.
\end{definition}
Note that $\{\bfg^\alpha\}_\alpha$ is not orthogonal for the inner product $<\cdot,\cdot >$, but is orthogonal for the one defined by $(\bfu,\bfv)\mapsto <\![b]\,\bfu,\bfv\!>$ on $\RR^N\times\RR^N$. It can also be seen that the construction of the reduced-order diffusion-maps basis $[g_m]$ depends, \textit{a priori}, on three parameters: the smoothing parameter $\varepsilon_\DM$, the order $m$, and the integer $\kappa$. Nevertheless, we will see in \eqref{sec:5.4} that $\kappa$ has not a role from a theoretical point of view in the proposed method, in contrary to the one used in \cite{Coifman2005}. In the PLoM, its  role is the one of an additional scaling; its value can therefore be fixed arbitrarily (for instance, it can be set to $1$ or even to $0$; in the latter case, we have $\bfg^\alpha =\bfpsi_\alpha$). As a result, the only two parameters that will be considered will be $\varepsilon_\DM$ and $m$.
\subsection{Estimation of the optimal values $\varepsilon_\opt$ and $m_\opt$ of $\varepsilon_\DM$ and $m$}
\label{sec:5.2}
\begin{hypothesis}[\textbf{On the initial data represented by matrix} ${\relax{[\eta_d]}}$]\label{hypothesis:5.2}
For a given matrix $[\eta_d]$,  the eigenvalues $\lambda_\alpha$ depend on $\varepsilon_\DM$. It is assumed that there exist a value $\varepsilon_\opt$ of $\varepsilon_\DM$ and a value $m_\opt > 2$ of integer $m$ such that
$1=\lambda_1 > \lambda_2(\varepsilon_\opt) \geq \ldots \geq \lambda_{m_\popt}(\varepsilon_\opt) \gg \lambda_{m_\popt+1}(\varepsilon_\opt)
\geq \ldots \geq \lambda_N(\varepsilon_\opt) > 0$.
\end{hypothesis}
Under Hypothesis~\ref{hypothesis:5.2}, an algorithm associated with the given initial dataset $[\eta_d]$ has been proposed in \cite{Soize2019b} for estimating the optimal value $\varepsilon_\opt$ of $\varepsilon_\DM$ and an optimal value $m_\opt$ of order $m$.
Most of time, $\varepsilon_\opt$ and $m_\opt$ can be estimated as follows.
Let $\varepsilon_\DM\mapsto \widehat m(\varepsilon_\DM)$ be the function from $]0\, , +\infty[$ into $\NN$ such that
\begin{equation}\label{eq:5.1}
\widehat m(\varepsilon_\DM) =
\arg \min_{\alpha \, \vert \, \alpha \geq 3}\left\{ \frac{\lambda_{\alpha}(\varepsilon_\DM)}{\lambda_{2}(\varepsilon_\DM)} < 0.1\right\} \, .
\end{equation}
If function $\widehat m$ is a decreasing function of $\varepsilon_\DM$ in the broad sense (if not, the general method based on Information Theory proposed in \cite{Soize2019b} should be used), then the optimal value  $\varepsilon_\opt$ of $\varepsilon_\DM$ can be chosen as the smallest value of the integer $\widehat m(\varepsilon_\opt)$ such that
\begin{equation}\label{eq:5.2}
  \{\widehat m(\varepsilon_\opt) \! < \widehat m(\varepsilon_\DM)\, , \forall \varepsilon_\DM \in \, ]0 ,\varepsilon_\opt[ \,\}
  \,\cap \, \{\widehat m(\varepsilon_\opt)  = \widehat m(\varepsilon_\DM)\, ,
  \forall \varepsilon_\DM \in \, ]\varepsilon_\opt , 1.5\,\varepsilon_\opt[ \, \} \, .
\end{equation}
The corresponding optimal value $m_\opt$ of $m$ is then given by $m_\opt = \widehat m(\varepsilon_\opt)$ and for such an optimal choice, we have seen through numerical experiments that
$  1=\lambda_1 > \lambda_2(\varepsilon_\opt) \simeq \ldots \simeq \lambda_{m_\opt}(\varepsilon_\opt) \gg \lambda_{m_\opt+1}(\varepsilon_\opt)
\geq \ldots \geq \lambda_N(\varepsilon_\opt) > 0$.
Note that such an algorithm has been used with success for many databases in engineering sciences (see \cite{Ghanem2018a,Ghanem2018b,Soize2018a,Farhat2019,Ghanem2019,Soize2019,Soize2019a,Soize2019c,Soize2019d}).
\subsection{On the relationship between hyperparameter $\varepsilon_\DM$ and the modified Silverman bandwidth $\widehat s$}
\label{sec:5.3}
The invariant measure associated with transition matrix $[\PP]$ of the one-step Markov chain is
$p^{\varepsilon_\DM}(i) = b_i\, (\sum_{j=1}^N b_j)^{-1}$, which is such that
$\sum_{i=1}^N p(j\vert i)\,p^{\varepsilon_\DM}(i)  = p^{\varepsilon_\DM}(j)$ in which $p(j\vert i) = [\PP]_{ij}$.
Let us compare the measure $p^{\varepsilon_\DM}(i) = (\sum_{j=1}^N b_j)^{-1} \, $ $\sum_{j'=1}^N \exp\{-(4\,\varepsilon_\DM)^{-1} \Vert\bfeta_d^i-\bfeta_d^{j'}\Vert^2\}$ with  $p_\bfH^{(N)}(\bfeta) \, d\bfeta$ in which $p_\bfH^{(N)}(\bfeta)$ is defined by equations \eqref{eq:3.1} and \eqref{eq:3.2}, which is written, for $N$ sufficiently large (that is to say for $\widehat s/s \sim 1$ and $\widehat s \sim s$), as
$p_\bfH^{(N)}(\bfeta) \simeq  N^{-1}(\sqrt{2\pi} s)^{-\nu}\, \sum_{j=1}^N \exp \{-(2 \,s^2)^{-1} \Vert \bfeta - \bfeta_d^j \Vert^2 \}$.
In general, for $\nu$ sufficiently large (for instance, $\nu \sim 10$), the optimal value $\varepsilon_\opt$ defined by equations \eqref{eq:5.1} and \eqref{eq:5.2} is such that $\varepsilon_\opt \gg 1$ while, since  $\nu \leq N$, equation \eqref{eq:3.2} shows that $s^2/2 < 1$.
Therefore,  $p^{\varepsilon_\DM}(i)$ is very different from the probability measure $p_\bfH^{(N)}(\bfeta)\, d\bfeta$   that corresponds to an observation of the initial dataset from inside it, that is to say, for an observation at the smallest scale. In contrast, the probability measure $p^{\varepsilon_\DM}(i)$ is the one for which the initial dataset is observed from outside it, that is to say, for an observation at a larger scale.
\subsection{Properties of the reduced-order diffusion-maps basis}
\label{sec:5.4}
\begin{definition}[\textbf{Matrices} ${\relax{[a_m]}}$ \textbf{and} ${\relax{[G_m]}}$ ]\label{definition:5.3}
For all fixed $m$, let $[g_m]\in\MM_{N,m}$ be the matrix defined in Definition~\ref{definition:5.1}.
Since matrix $[g_m]^T\,[g_m]$ $\in \MM_{m}$ is invertible, we define the matrix $[a_m] =
[g_m]\, ([g_m]^T\,[g_m])^{-1} \in \MM_{N,m}$ and the matrix
$[G_m] = [a_m] \, [g_m]^T = [g_m]\, ([g_m]^T\,[g_m])^{-1} \, [g_m]^T \in\MM_N$.
\end{definition}
It should be noted that, as announced at the end of \eqref{sec:5.1}, matrix $[G_m]$ is independent of $\lambda_1^\kappa ,\ldots , \lambda_m^\kappa$ and thus, is independent of $\kappa$.
\begin{lemma}[\textbf{Properties of} ${\relax{[G_m]}}$]\label{lemma:5.4}
For all $m$ such that $1\leq m \leq N-1$:

\noindent (i) rank$\{[G_m]\} = m$, $\tr\{[G_m]\} = m$, $[G_m]^T = [G_m]$, and $[G_m]\in \MM_N^{+0}$.

\noindent (ii) for $m=N$, we have $[G_N] = [I_N]$.

\noindent(iii) $[G_m]^2 = [G_m]$, thus $[G_m]$ is idempotent and is a projection operator.

\noindent (iv) the eigenvalue problem $[G_m]\, \bfvarphi^\alpha = \mu_\alpha \, \bfvarphi^\alpha$ is such that $\mu_1 = \ldots = \mu_m = 1$ and $\mu_{m+1} = \ldots = \mu_N = 0$. Matrix $[G_m]$ can be written as
$[G_m] = \sum_{\alpha=1}^N \mu_\alpha \bfvarphi^\alpha\otimes \bfvarphi^\alpha = \sum_{\alpha=1}^m \bfvarphi^\alpha\otimes \bfvarphi^\alpha$
in which the eigenvectors are such that $<\! \bfvarphi^\alpha , \bfvarphi^\beta\!> = \delta_{\alpha\beta}$.

\noindent (v) $[I_N] - [G_m] \in\MM_N^{+0}$.
\end{lemma}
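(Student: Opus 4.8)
The plan is to recognize that $[G_m] = [g_m]\,([g_m]^T\,[g_m])^{-1}\,[g_m]^T$ is precisely the orthogonal projection matrix onto the column space of $[g_m]$ in $\RR^N$, and then to read off all five properties from the standard theory of such projectors. Since $[g_m]^T[g_m]$ is invertible (Definition~\ref{definition:5.3}), the $N\times m$ matrix $[g_m]$ has full column rank $m$, and this single structural fact drives the entire argument.

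First I would establish the two algebraic identities from which everything else flows. For the symmetry asserted in (i), transposing the product and using that $([g_m]^T[g_m])^{-1}$ is symmetric (being the inverse of a symmetric matrix) yields $[G_m]^T = [G_m]$ at once. For idempotency (iii), I would multiply $[G_m]$ by itself and collapse the inner factor $[g_m]^T[g_m]\,([g_m]^T[g_m])^{-1} = [I_m]$, leaving $[G_m]^2 = [G_m]$. The trace in (i) then follows from cyclic invariance, $\tr\{[G_m]\} = \tr\{([g_m]^T[g_m])^{-1}[g_m]^T[g_m]\} = \tr\{[I_m]\} = m$; and since $[G_m]$ is a projection, its rank equals its trace, giving rank$\{[G_m]\}=m$. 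Positive semidefiniteness in (i) comes from writing the quadratic form $\bfx^T[G_m]\bfx = \bfy^T([g_m]^T[g_m])^{-1}\bfy$ with $\bfy = [g_m]^T\bfx$ and noting $([g_m]^T[g_m])^{-1}\in\MM_m^+$, so $[G_m]\in\MM_N^{+0}$.

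With (i) and (iii) in hand, parts (iv) and (v) are almost immediate. A real symmetric idempotent matrix is orthogonally diagonalizable with eigenvalues confined to $\{0,1\}$; the number of unit eigenvalues equals both the rank and the trace, hence equals $m$, while the remaining $N-m$ eigenvalues vanish, which gives the spectral decomposition $[G_m] = \sum_{\alpha=1}^m \bfvarphi^\alpha\otimes\bfvarphi^\alpha$ with orthonormal eigenvectors supplied by the spectral theorem. For (v), the matrix $[I_N]-[G_m]$ is the complementary projector, symmetric with eigenvalues $1$ and $0$, hence lies in $\MM_N^{+0}$. Finally, (ii) is the degenerate case: when $m=N$ the full family $\{\bfg^1,\ldots,\bfg^N\}$ is a basis of $\RR^N$ (Definition~\ref{definition:5.1}), so $[g_N]$ is square and invertible and the product telescopes, $[G_N] = [g_N][g_N]^{-1}([g_N]^T)^{-1}[g_N]^T = [I_N]$.

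I do not anticipate a genuine obstacle here, as every step is routine linear algebra once the projector structure is identified. The only point requiring care is (iv), where I must combine symmetry, idempotency, the rank count, and the trace count simultaneously to pin the eigenvalue multiplicities down exactly, rather than merely bounding them.
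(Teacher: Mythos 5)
Your proof is correct, and since the paper's own ``proof'' of Lemma~\ref{lemma:5.4} is simply left to the reader, your argument is exactly the routine orthogonal-projector reasoning the authors intend: full column rank of $[g_m]$ gives symmetry and idempotency, cyclic invariance of the trace gives $\tr\{[G_m]\}=m$, and the spectral theorem pins down the $\{0,1\}$ eigenvalues with multiplicities fixed by the rank/trace count. No gaps; all five parts are handled, including the complementary projector for (v) and the telescoping of the square invertible $[g_N]$ for (ii).
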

\begin{proof}
The proof is left to the reader.
\end{proof}
\section{Probabilistic learning on manifolds (PLoM): construction of the probability measure  and its generator}
\label{sec:6}
The three main steps of the PLoM introduced in \cite{Soize2016} are the following.
 1) Construction of a MCMC generator for random matrix $[\bfH^N]$ defined in Definition~\ref{definition:4.2}, based on a nonlinear It\^o stochastic differential equation (ISDE) that will be introduced in \eqref{sec:6.1}, for which the probability measure $p_{[\bfH^N]}([\eta])\, d[\eta]$ is a marginal probability distribution of the unique invariant measure of this ISDE.
2) Definition of a reduced representation $[\bfH^N_m] = [\bfZ_m]\, [g_m]^T$ of order $m < N$ for random matrix $[\bfH^N]$ using the reduced-order diffusion-maps basis $[g_m]$ and where $[\bfZ_m]$ is a random matrix with values in $\MM_{\nu,m}$ for which its probability measure is $p_{[\bfZ_m]}([z])\, d[z]$.
3) Construction of a reduced-order ISDE for which $p_{[\bfZ_m]}([z])\, d[z]$ is a marginal probability distribution of its unique invariant measure. We will then obtain a MCMC generator of random matrix $[\bfZ_m]$ and then of random matrix $[\bfH^N_m]$, which allows  a learned dataset  $\{[\eta_\ar^\ell],\ell=1,\ldots , n_\MC\}$ to be generated with an  arbitrary number $n_\MC$ of realizations of $[\bfH^N_{m_\popt}]$.

As already explained, the PLoM methodology has been developed for small values of $N$ (small data) for which the probability measure $p_{\bfH}^{(N)} (\bfeta) \, d\bfeta$ is not necessarily converged. Therefore additional realizations that would be generated with this measure would not provide good realizations preserving the concentration.  This is the reason why, the measure $p_{[\bfH^N]}([\eta])\, d[\eta]$ is improved by introducing the transported probability measure $p_{[\bfZ_{m_\popt}]}([z])\, d[z]$ of random matrix $[\bfZ_{m_\popt}]$. It should be noted that the additional realizations of $[\bfH^N_{m_\popt}]$ are not constructed using the projection of realizations of $[\bfH^N]$ on the subspace spanned by the reduced-order diffusion-maps basis $[g_{m_\popt}]$ (that would not be correct for a small value of $N$), but are constructed using the reduced-order ISDE associated with the transported probability measure $p_{[\bfZ_{m_\popt}]}([z])\, d[z]$ allowing additional realizations of $[\bfZ_{m_\popt}]$ to be generated and then deducing the additional realizations of
$[\bfH^N_{m_\popt}] = [\bfZ_{m_\popt}] \, [g_{m_\popt}]^T$.
\subsection{MCMC generator for random matrix $[\bfH^N]$}
\label{sec:6.1}
The PLoM method begins with the construction of a MCMC generator for random matrix $[\bfH^N]$ whose pdf $p_{[\bfH^N]}$ is given by equation \eqref{eq:4.4}.
It is based on a nonlinear ISDE, formulated for a dissipative Hamiltonian dynamical system \cite{Soize2008b,Soize2015,Soize2016} for a diffusion stochastic process $\{( [\bfU(r)],[\bfV(r)]) , r\geq 0\}$ with values in $\MM_{\nu,N}\times \MM_{\nu,N}$, which admits a unique invariant measure  for which the marginal  probability distribution with respect to $[\bfU]$ is the probability measure $p_{[\bfH^N]}([\eta])\, d[\eta]$. This MCMC generator is adapted to perform its projection on the subspace spanned by the reduced-order diffusion-maps basis and in addition, a dissipative term allows the transient part of the response to be rapidly killed. This MCMC generator belongs to the class of Hamiltonian Monte Carlo methods \cite{Neal2011,Girolami2011}, which is an MCMC algorithm \cite{Kaipio2005,Robert2005,Spall2003}.
\begin{notation} [\textbf{Matrix-valued Wiener process} ${\relax{[\bfW]}}$ \textbf{and parameter} $f_0$]\label{notation:6.1}
Let us introduce the stochastic process $\{ [\bfW(r)] = [\bfW^1(r) \ldots \bfW^N(r)],r\geq 0 \}$ defined on $(\Theta,\curT,\curP)$, with values in $\MM_{\nu,N}$, independent of random matrix $[\bfH^N]$, in which the columns $\bfW^1,\ldots,\bfW^N$ are $N$ independent copies of the normalized Wiener stochastic process $\bfW = (W_1,\ldots, W_\nu)$, defined on $(\Theta,\curT,\curP)$, indexed by $\RR^+$, with values in $\RR^\nu$, such that $\bfW(0) = 0_\nu$ a.s., $E\{\bfW(r)\} = 0_\nu$, and $E\{\bfW(r)\otimes\bfW(r')\} = \min (r,r')\, [I_\nu]$.
Let $f_0 > 0$ be a free parameter that will allow the dissipation term of the nonlinear ISDE (dissipative
Hamiltonian system)  to be controlled.
\end{notation}
\begin{theorem}[\textbf{ISDE as the MCMC generator of matrix} ${\relax{[\bfH^N]}}$] \label{theorem:6.2}
Using Notation~\ref{notation:6.1}, we consider the stochastic process $\{( [\bfU(r)],[\bfV(r)]) ,$ $r\geq 0\}$ with values in $\MM_{\nu,N}\times \MM_{\nu,N}$, which verifies the following  ISDE for $r > 0$, with the initial conditions for $r=0$,
\begin{subequations} \label{eq:6.1}
  \begin{align}
  d[\bfU(r)] & = [\bfV(r)]\, dr \, , \label{eq:6.1a}\\
  d[\bfV(r)] & =[L([\bfU(r)])]\, dr -\frac{1}{2} \, f_0\, [\bfV(r)]\, dr + \sqrt{f_0}\,\, d[\bfW(r)] \, ,\label{eq:6.1b}\\
  [\bfU(0)]  & = [\eta_d]\,\,  a.s. \quad , \quad [\bfV(0)]= [v_0] \,\, a.s. \, ,\label{eq:6.1c}
  \end{align}
  \end{subequations}
in which $[\eta_d]$ is defined by equation \eqref{eq:4.2} and  where  $[v_0]$ a given matrix in $\MM_{\nu,N}$. For $k=1,\ldots,\nu$ and $\ell=1,\ldots , N$, and for $\bfu^\ell = (u_1^\ell,\ldots ,u_\nu^\ell)$ with $u_k^\ell = [u]_{k\ell}$, the matrix $[L([u])]\in\MM_{\nu,N}$ is defined, as a function of a potential $\curV$, by
$ [L([u])]_{k\ell} = -\partial\curV(\bfu^\ell)/\partial u_k^\ell$ in which
$\curV(\bfu^\ell) = -\log\{ \frac{1}{N}\sum_{j=1}^N \exp\{-\frac{1}{2\,\widehat s^2}\,
                \Vert \frac{\widehat s}{s}\, \bfeta_d^j -\bfu^\ell \Vert^2  \} \}$.
The ISDE defined by equations \eqref{eq:6.1a} and \eqref{eq:6.1b} admits the unique invariant measure
$p_{[\bfH^N],[\bfV^N]}([\eta],[v])\, d[\eta]\otimes d[v]  =   (p_{[\bfH^N]}([\eta])\, d[\eta])\otimes (p_{[\bfV^N]}([v])\, d[v])$
on $\MM_{\nu,N}\times \MM_{\nu,N}$,
 in which $p_{[\bfV^N]}$ is the Gaussian density $[v]\mapsto (2\pi)^{-\nu N/2}$ $\exp\{-\Vert v \Vert^2/2\}$ on $\MM_{\nu,N}$ and where the pdf $p_{[\bfH^N]}([\eta])$ is defined by equation \eqref{eq:4.4}. Matrix $[v_0]$ is any realization of the Gaussian pdf $p_{[\bfV^N]}$, independent of $\{[\bfW(r)],r\geq 0\}$.
\end{theorem}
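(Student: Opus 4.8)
The plan is to recognize the ISDE \eqref{eq:6.1} as the underdamped Langevin (dissipative Hamiltonian) dynamics associated with the potential $\Phi([u]) = \sum_{\ell=1}^N \curV(\bfu^\ell)$, and to verify directly that the proposed product measure is stationary for the corresponding Fokker--Planck equation. First I would record the elementary identity linking the drift to $p_{[\bfH^N]}$: from \eqref{eq:4.4} and the definition of $\curV$ one has $p_{[\bfH^N]}([\eta]) = (\sqrt{2\pi}\,\widehat s)^{-\nu N}\exp\{-\Phi([\eta])\}$, and since $[L([u])]_{k\ell} = -\partial\curV(\bfu^\ell)/\partial u_k^\ell = -\partial\Phi([u])/\partial u_{k\ell}$, the drift is exactly $-\nabla_{[u]}\Phi$. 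Thus the candidate invariant density should be $\rho([u],[v]) = c\,\exp\{-\Phi([u]) - \frac{1}{2}\Vert v\Vert^2\}$, which factors into $p_{[\bfH^N]}([u])$ times the Gaussian $p_{[\bfV^N]}([v]) = (2\pi)^{-\nu N/2}\exp\{-\Vert v\Vert^2/2\}$ announced in the statement.

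The central step is to check $\curA^*\rho = 0$, where $\curA^*$ is the Fokker--Planck operator $\curA^*\rho = -\sum_{k,\ell}\partial_{u_{k\ell}}(v_{k\ell}\rho) - \sum_{k,\ell}\partial_{v_{k\ell}}((L_{k\ell}-\frac{1}{2} f_0 v_{k\ell})\rho) + \frac{1}{2} f_0\sum_{k,\ell}\partial^2_{v_{k\ell}}\rho$ read off from \eqref{eq:6.1a}--\eqref{eq:6.1b} (the columns of $[\bfW]$ being independent normalized Wiener processes, the diffusion matrix is $f_0\,[I_\nu]$ acting only on the $[v]$ block). Using $\partial_{u_{k\ell}}\rho = L_{k\ell}\rho$, $\partial_{v_{k\ell}}\rho = -v_{k\ell}\rho$, and $\partial^2_{v_{k\ell}}\rho = (v_{k\ell}^2-1)\rho$, the transport (Hamiltonian) contributions $-\sum v_{k\ell}L_{k\ell}\rho$ and $+\sum L_{k\ell}v_{k\ell}\rho$ cancel, while the dissipation term and the diffusion term annihilate each other term-by-term through the fluctuation--dissipation balance: the $\frac{1}{2} f_0 v_{k\ell}^2$ and $\frac{1}{2} f_0$ contributions coming from the friction are exactly cancelled by those coming from $\frac{1}{2} f_0\,\partial^2_{v_{k\ell}}\rho$. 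Hence $\curA^*\rho \equiv 0$, and since $p_{[\bfH^N]}$ is already a normalized density and the Gaussian factor integrates to one, $\rho$ is a probability density whose $[\bfU]$-marginal is $p_{[\bfH^N]}$, giving the stated product invariant measure.

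It then remains to justify well-posedness and uniqueness. The gradient $\nabla\curV$ is a softmax-weighted combination of the affine maps $\bfu\mapsto \bfu - (\widehat s/s)\,\bfeta_d^j$, hence globally Lipschitz with bounded Hessian; standard results then give existence and uniqueness of a strong solution of \eqref{eq:6.1} for any initial data, in particular for the prescribed $[\bfU(0)]=[\eta_d]$, $[\bfV(0)]=[v_0]$. For uniqueness of the invariant measure I would invoke the dissipative-Hamiltonian framework already established by the authors in \cite{Soize2008b,Soize2015,Soize2016}: the noise enters only the velocity block, but through the coupling $d[\bfU]=[\bfV]\,dr$ the Hörmander bracket condition holds, so the process is hypoelliptic with a smooth, everywhere-positive transition density; combined with the confining growth of $\Phi$ at infinity, which supplies a Lyapunov function, this yields irreducibility and geometric ergodicity and hence a unique invariant probability measure, necessarily the one exhibited above.

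The main obstacle is not the stationarity verification, which reduces to a clean fluctuation--dissipation cancellation, but the uniqueness/ergodicity argument: because the generator is only hypoelliptic (the diffusion being degenerate in the $[u]$ directions), uniqueness of the invariant measure is not automatic and genuinely requires the Hörmander and Lyapunov ingredients sketched above, which is precisely the part that leans on the earlier dissipative-Hamiltonian construction of \cite{Soize2008b,Soize2015,Soize2016}.
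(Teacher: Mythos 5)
Your proposal is correct, but it takes a genuinely different, more self-contained route than the paper. The paper's proof of Theorem~\ref{theorem:6.2} is essentially a citation: it recognizes equations \eqref{eq:6.1a}--\eqref{eq:6.1b} as a dissipative Hamiltonian ISDE with Hamiltonian $\curH(\bfu,\bfv) = \Vert\bfv\Vert^2/2 + \curV(\bfu)$ and invokes Theorems 4, 6, and 7 (pages 211--214) of \cite{Soize1994}, which deliver in one stroke the well-posedness, the uniqueness of the invariant measure, and its explicit Gibbs form. You instead carry out the verification yourself: the identity $p_{[\bfH^N]}([\eta]) = (\sqrt{2\pi}\,\widehat s)^{-\nu N}\exp\{-\Phi([\eta])\}$ with $\Phi([u])=\sum_{\ell=1}^N \curV(\bfu^\ell)$ is exactly right, the term-by-term cancellation in the stationary Fokker--Planck equation is correctly executed (the Hamiltonian transport terms cancel each other, and the friction coefficient $\tfrac{1}{2}f_0$ balances the noise intensity $\sqrt{f_0}$, i.e.\ fluctuation--dissipation at unit temperature), and your claim that $\nabla\curV$ is globally Lipschitz is true and worth having on record: the softmax-weighted mean $\sum_j w_j(\bfu)\,\bfeta_d^j$ has bounded derivative because the $\bfu$-dependence cancels inside the relevant bracket, so the drift is a linear restoring term plus a bounded Lipschitz perturbation. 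What your approach buys is transparency --- it exhibits precisely which analytic ingredients (hypoellipticity via H\"ormander brackets, quadratic confinement of $\Phi$ as a Lyapunov structure) the paper's citation conceals, and why this particular friction/noise pairing produces exactly $p_{[\bfH^N]}\otimes p_{[\bfV^N]}$. What the paper's approach buys is brevity and a correctly placed attribution: the uniqueness and ergodicity theorems for this degenerate (hypoelliptic) diffusion are actually proven in \cite{Soize1994}, not in the application papers \cite{Soize2008b,Soize2015,Soize2016} that you lean on for that step; as you yourself note, the uniqueness argument is the only part of your proof that remains a sketch, so a fully self-contained version would require writing out the H\"ormander/Lyapunov ergodicity argument or citing it from the hypoelliptic-diffusion literature rather than from those references. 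A last cosmetic point: the diffusion matrix acting on the $[v]$ block is $f_0$ times the identity of dimension $\nu N$, not $f_0\,[I_\nu]$, though your Fokker--Planck operator is written correctly.
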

\begin{proof}
Since the columns $\bfH^1,\ldots , \bfH^N$ of random matrix $[\bfH^N]$ are independent copies of random vector $\bfH^{(N)}$ (see Definition~\ref{definition:4.2}), and since the pdf of random matrix $[\bfH^N]$ is $p_{[\bfH^N]}$ defined by equation \eqref{eq:4.4}, Theorems 4, 6, and  7 in Pages 211 to 214 of \cite{Soize1994} and the expression of the invariant measure given in Page 211 of the same reference, for which the Hamiltonian is $\curH(\bfu,\bfv) = \Vert \bfv\Vert^2 /2 + \curV(\bfu)$, prove that the invariant measure is the one given in Theorem~\ref{theorem:6.2} and is unique.
\end{proof}
\subsection{Reduced representation $[\bfH_m^N]$ of random matrix $[\bfH^N]$}
\label{sec:6.2}
\begin{definition}[\textbf{Random matrix} ${\relax{[\bfH_m^N]}}$]\label{definition:6.3}
For given $\varepsilon_\DM$, $m$, and $\kappa$, the random matrix $[\bfH_m^N]$ on $(\Theta,\curT,\curP)$, with values in $\MM_{\nu,N}$, is defined by  $[\bfH^N_m] = [\bfZ_m]\, [g_m]^T$ with  $[g_m]\in\MM_{N,m}$ defined in Definition~\ref{definition:5.1} and where $[\bfZ_m]$ is a random matrix with values in $\MM_{\nu,m}$ for which its probability measure admits a pdf $p_{[\bfZ_m]}([z])$ with respect to $d[z]$.
\end{definition}
\begin{notation}[\textbf{Random vectors} ${\relax{\widehat\bfH^k}}$ \textbf{and} ${\relax{\widehat\bfZ^k}}$]\label{notation:6.4}
For $k\in\{1,\ldots, \nu\}$, let $\widehat\bfH^k = (\widehat H_1^k,\ldots,$ $\widehat H_N^k)$ be the random vector in $\RR^N$ such that
$\widehat H_j^k = [\bfH_m^N]_{kj}$ for $j\in\{1,\ldots, N\}$ and let $\widehat\bfZ^k = (\widehat Z_1^k,\ldots, \widehat Z_m^k)$
be the  random vector in $\RR^m$ such that  $\widehat Z_\alpha^k  = [\bfZ_m]_{k\alpha}$ for $\alpha\in\{1,\ldots,m\}$. Consequently,
$\widehat\bfH^k = \sum_{\alpha=1}^m  \widehat Z_\alpha^k \, \bfg^\alpha$ in which $\bfg^\alpha$  is defined in Definition~\ref{definition:5.1}.
\end{notation}
Let $\curL^0(\Theta,\RR^N)$ be the vector space of all the random variables, defined on $(\Theta,\curT,\curP)$, with values in $\RR^N$.
It can be seen that each $\RR^N$-valued random variable $\widehat\bfH^k$ belongs to the subspace $\curL^0(\Theta,\curE_m)\subset \curL^0(\Theta,\RR^N)$ in which $\curE_m\subset\RR^N$ is the subspace of $\RR^N$ spanned by $\{\bfg^1,\ldots \bfg^m\}$.
Note that, contrarily to the PCA that is a reduction following the physical coordinates axis, the  representation constructed with the reduced-order diffusion-maps basis is a reduction following the data axis.
\begin{remark}[\textbf{Relationship between} ${\relax{[\bfH^N_N]}}$ \textbf{and} ${\relax{[\bfH^N]}}$] \label{remark:6.5}
Since $[g_N]$ is a vector basis of $\RR^N$ (see Definition~\ref{definition:5.1}), for $m=N$, the random matrix $[\bfH_N^N]$ is an independent copy of random matrix $[\bfH^N]$ introduced in Definition~\ref{definition:4.2}, in which  $[\bfH^N_N] = [\bfZ_N]\, [g_N]^T$ is a representation of $[\bfH^N]$ with $[\bfZ_N] = [\bfH^N]\, [a_N]$, where $[a_N]$ is given by Definition~\ref{definition:5.3} for $m=N$.
\end{remark}
\subsection{Explicit expression of pdf $p_{[\bfZ_m]}$ and reduced-order ISDE}
\label{sec:6.3}
\begin{theorem}[\textbf{Reduced-order ISDE and  pdf} ${\relax{p_{[\bfZ_m]}}}$]\label{theorem:6.6}
The notations introduced in Definition~\ref{definition:5.3} and in Theorem~\ref{theorem:6.2} are used.
For given $\varepsilon_\DM$, $m$, and $\kappa$, let  $\{( [\bfcurZ(r)],[\bfcurY(r)]),$ $r\geq 0\}$ be the stochastic process defined on $(\Theta,\curT,\curP)$, with values in $\MM_{\nu,m}\times \MM_{\nu,m}$, which verifies the following reduced-order ISDE for all $r > 0$, with the initial conditions for $r=0$,
\begin{subequations} \label{eq:6.2}
  \begin{align}
  d[\bfcurZ(r)] & = [\bfcurY(r)]\, dr \, , \label{eq:6.2a}\\
  d[\bfcurY(r)] & =[\curL([\bfcurZ(r)])]\, dr -\frac{1}{2} \, f_0\, [\bfcurY(r)]\, dr + \sqrt{f_0}\,\, d[\bfW(r)] \, [a_m] \, ,\label{eq:6.2b}\\
  [\bfcurZ(0) ] & = [\eta_d]\,[a_m] \,\, a.s. \quad , \quad [\bfcurY(0)]= [v_0]\,[a_m] \,\, a.s. \, ,\label{eq:6.2c}
  \end{align}
\end{subequations}
in which, $\forall \,[z]\in\MM_{\nu,m}$,  $[\curL([z])] = [L([z]\,[g_m]^T)]\, [a_m]\in\MM_{\nu,m}$.
Equations \eqref{eq:6.2a} and \eqref{eq:6.2b} admit the unique invariant measure on $\MM_{\nu,m}\times \MM_{\nu,m}$,
 \begin{equation}\label{eq:6.3}
p_{[\bfZ_m],[\bfY_m]}([z],[y])\, d[z]\otimes d[y]  =   (p_{[\bfZ_m]}([z])\, d[z])\otimes (p_{[\bfY_m]}([y])\, d[y]) \, ,
\end{equation}
in which $p_{[\bfY_m]}$ is the Gaussian density $[y]\mapsto (2\pi)^{-\nu m/2}$ $\exp\{-\Vert y \Vert^2/2\}$ on $\MM_{\nu,m}$ and where the pdf $[z]\mapsto p_{[\bfZ_m]}([z])$ on $\MM_{\nu,m}$ is written as
 \begin{equation}\label{eq:6.4}
 p_{[\bfZ_m]}([z])  = c_{\nu m} \, \prod_{\ell=1}^N    \{ \sum_{j=1}^N
       \exp\{-\frac{1}{2\widehat s^2}\Vert\frac{\widehat s}{s}\bfeta_d^j - \sum_{\alpha=1}^m \bfz^\alpha g_\ell^\alpha\Vert^2\}     \} \, .
 \end{equation}
The positive parameter $c_{\nu m}$ is the constant of normalization,  $[z] = [\bfz^1 \ldots \bfz^m] \in \MM_{\nu,m}$ with $\bfz^\alpha = (z_1^\alpha,\ldots , z_\nu^\alpha)\in\RR^\nu$ and with
$z_k^\alpha = [z]_{k\alpha}$, and  $g_\ell^\alpha$ is given by Definition~\ref{definition:5.1}.
The reduced-order ISDE with initial conditions, defined by equations \eqref{eq:6.2a} to \eqref{eq:6.2c}, has a unique stochastic solution $\{( [\bfcurZ(r)],[\bfcurY(r)]), r\geq 0\}$ that is a second-order diffusion stochastic process, which is asymptotic, for $r\rightarrow +\infty$, to a stationary and ergodic stochastic process
$\{ ([\bfcurZ_\st(r_\st)],[\bfcurY_\st(r_\st)]),$ $r_\st\ge 0\}$ for the right-shift semi-group on $\RR^+ = [0,+\infty[$. For all $r_\st$ fixed in $\RR^+$, the joint probability measure of the random matrices $[\bfcurZ_\st(r_\st)]$ and $[\bfcurY_\st(r_\st)]$ is the invariant measure defined by equation \eqref{eq:6.3} and the pdf of random matrix $[\bfcurZ_\st(r_\st)]$ is defined by equation \eqref{eq:6.4}.
Consequently, equations \eqref{eq:6.2a} to \eqref{eq:6.2c} yield a MCMC generator of random matrix $[\bfZ_m]$ and parameter $f_0$ allows for killing the transient regime induced by initial conditions, in order to reach the stationary solution more quickly.
\end{theorem}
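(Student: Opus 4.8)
The plan is to show that the reduced-order ISDE \eqref{eq:6.2}, after a single constant linear change of the matrix variables, is a dissipative Hamiltonian ISDE in canonical form with an identity diffusion coefficient, i.e. the same object treated in Theorem~\ref{theorem:6.2} but with $m$ in place of $N$; the conclusion then follows by invoking the results of \cite{Soize1994} exactly as there. Writing $\widehat\curV([u]) = \sum_{\ell=1}^N\curV(\bfu^\ell)$ for the full-system potential, so that $p_{[\bfH^N]}([\eta])\propto\exp\{-\widehat\curV([\eta])\}$ by \eqref{eq:4.4}, the only structural difficulty is that the matrix $[a_m]$ sits in both the drift $[\curL]$ and the diffusion coefficient of \eqref{eq:6.2b} and breaks the canonical form; the first task is therefore to absorb it.

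To that end I would use Lemma~\ref{lemma:5.4}(iv): collect the orthonormal eigenvectors of $[G_m]$ for the eigenvalue $1$ into $[\Phi_m] = [\bfvarphi^1\ldots\bfvarphi^m]\in\MM_{N,m}$, so that $[\Phi_m]^T[\Phi_m] = [I_m]$, $[\Phi_m][\Phi_m]^T = [G_m]$, and $[G_m][\Phi_m] = [\Phi_m]$. Set $[T] = [g_m]^T[\Phi_m]\in\MM_m$, which is invertible because $[g_m]$ and $[\Phi_m]$ have the same column space (the range of $[G_m]$), and introduce $[\bfP(r)] = [\bfcurZ(r)]\,[T]$ and $[\boldsymbol{\Pi}(r)] = [\bfcurY(r)]\,[T]$. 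The two identities to check are $[a_m]\,[T] = [G_m]\,[\Phi_m] = [\Phi_m]$ and $[g_m]^T[G_m] = [g_m]^T$, the second giving $[\bfcurZ]\,[g_m]^T = [\bfP]\,[\Phi_m]^T$. Substituting into \eqref{eq:6.2a}--\eqref{eq:6.2b} and right-multiplying by $[T]$, the process $([\bfP],[\boldsymbol{\Pi}])$ satisfies
\begin{equation*}
d[\bfP] = [\boldsymbol{\Pi}]\,dr,\qquad d[\boldsymbol{\Pi}] = -[\nabla\widetilde\curV([\bfP])]\,dr - \tfrac12\,f_0\,[\boldsymbol{\Pi}]\,dr + \sqrt{f_0}\;d[\bfW]\,[\Phi_m],
\end{equation*}
where $\widetilde\curV([\bfp]) = \widehat\curV([\bfp]\,[\Phi_m]^T)$ and where $d[\bfW]\,[\Phi_m]$ is a normalized $\MM_{\nu,m}$-valued Wiener increment because $[\Phi_m]^T[\Phi_m] = [I_m]$. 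That the drift is exactly $-[\nabla\widetilde\curV]$ follows from the chain rule applied to $[L([\bfP][\Phi_m]^T)]\,[\Phi_m]$ together with the definition of $[L]$ in Theorem~\ref{theorem:6.2}.

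This is a dissipative Hamiltonian ISDE with Hamiltonian $\curH([\bfp],[\boldsymbol{\pi}]) = \tfrac12\Vert\boldsymbol{\pi}\Vert^2 + \widetilde\curV([\bfp])$ and identity diffusion, structurally identical to \eqref{eq:6.1}. Because $\curV$ is smooth and grows quadratically at infinity, and $[\Phi_m]$ is an isometric embedding, $\widetilde\curV$ inherits the smoothness and coercivity needed for Theorems~4, 6, and~7 of \cite{Soize1994}. Invoking them as in the proof of Theorem~\ref{theorem:6.2} yields the existence and uniqueness of a second-order diffusion solution, its asymptotic stationarity and ergodicity for the right-shift semi-group, and a unique invariant measure of product form, whose $[\bfP]$-marginal has density proportional to $\exp\{-\widetilde\curV([\bfp])\}$ and whose momentum marginal is the normalized Gaussian on $\MM_{\nu,m}$.

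Finally I would push this invariant measure back through the constant-Jacobian linear bijection $[\bfcurZ]\mapsto[\bfcurZ]\,[T]$; the product (factorized) structure announced in \eqref{eq:6.3} is preserved, the constant Jacobian being absorbed into the normalization. For the marginal of $[\bfZ_m]$ one uses $[T]\,[\Phi_m]^T = [g_m]^T[G_m] = [g_m]^T$, whence $\widetilde\curV([z]\,[T]) = \widehat\curV([z]\,[g_m]^T) = \sum_{\ell=1}^N\curV(\sum_{\alpha=1}^m\bfz^\alpha g_\ell^\alpha)$; exponentiating and normalizing gives exactly \eqref{eq:6.4} with $c_{\nu m}$ the normalization constant, and the asymptotic stationarity, ergodicity and the role of $f_0$ transfer verbatim from the canonical system. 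I expect the main obstacle to be purely the matrix bookkeeping, namely establishing $[a_m]\,[T] = [\Phi_m]$ and $[T]\,[\Phi_m]^T = [g_m]^T$, which is what makes the $[a_m]$-factor disappear and turns the driving noise into a normalized Wiener increment; once these identities are secured, the statement reduces to the invariant-measure theorem already used for Theorem~\ref{theorem:6.2}.
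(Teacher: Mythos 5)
Your proposal is correct, but it takes a genuinely different route from the paper's own proof. The paper stays in the variables $([\bfcurZ],[\bfcurY])$: it obtains \eqref{eq:6.4} by transporting $p_{[\bfH^N]}$ through the representation $[\bfH_m^N]=[\bfZ_m]\,[g_m]^T$ (image-measure argument), then rewrites the reduced-order ISDE as $\nu$ coupled second-order equations on $\RR^m$, left-multiplies by $[g_m]^T[g_m]$ so that the system appears as a dissipative Hamiltonian ISDE with \emph{mass matrix} $[g_m]^T[g_m]$, damping $\frac12 f_0\,[g_m]^T[g_m]$, and noise coefficient $\sqrt{f_0}\,[g_m]^T$ (whose covariance $f_0\,[g_m]^T[g_m]$ restores the fluctuation--dissipation balance), and invokes the general mass-matrix results of Chapter~XIII of \cite{Soize1994}. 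You instead eliminate the mass matrix at the outset: the constant change of variables $[\bfP]=[\bfcurZ]\,[T]$ with $[T]=[g_m]^T[\Phi_m]$, built from the orthonormal eigenvectors of $[G_m]$ (Lemma~\ref{lemma:5.4}-(iv)), turns \eqref{eq:6.2} into the canonical identity-diffusion system already treated in Theorem~\ref{theorem:6.2}, so that only the theorems of pages 211--216 of \cite{Soize1994} are needed, and the invariant measure is then pushed back through a constant-Jacobian linear bijection. Your key identities $[a_m][T]=[\Phi_m]$, $[g_m]^T[G_m]=[g_m]^T$, $[T][\Phi_m]^T=[g_m]^T$, the invertibility of $[T]$, the chain-rule identification of the drift with the negative gradient of the transformed potential, and the fact that $d[\bfW]\,[\Phi_m]$ is a normalized Wiener increment are all verified; the coercivity transfer through the isometry $[\bfp]\mapsto[\bfp][\Phi_m]^T$ is also sound. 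What your route buys is economy (pure linear algebra plus the same canonical theorems used for Theorem~\ref{theorem:6.2}, no mass-matrix theory) and a transparent fluctuation--dissipation structure; what the paper's route buys is that it never leaves the original coordinates and it exhibits \eqref{eq:6.4} directly as the transported measure of $p_{[\bfH^N]}$, which is the interpretation the PLoM methodology rests on. One caveat, which affects the paper exactly as much as you: pushing the normalized Gaussian momentum marginal of $[\boldsymbol{\Pi}]$ back through $[\bfY_m]=[\boldsymbol{\Pi}]\,[T]^{-1}$ gives a Gaussian with row covariance $([T][T]^T)^{-1}=([g_m]^T[g_m])^{-1}$, which is precisely the factor $\exp\{-\frac12\sum_{k}<\![g_m]^T[g_m]\,\widehat\bfy^k,\widehat\bfy^k\!>\}$ appearing in the paper's own proof; both arguments therefore agree with the literal normalized-Gaussian form of $p_{[\bfY_m]}$ in \eqref{eq:6.3} only when $[g_m]^T[g_m]=[I_m]$, and your change of variables actually makes this discrepancy easier to see, without affecting the claims about $p_{[\bfZ_m]}$, uniqueness, ergodicity, or the MCMC use of the generator.
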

\begin{proof}
We introduce the stochastic process $\{( [\bfcurZ(r)],[\bfcurY(r)]), r\geq 0\}$ with values in $\MM_{\nu,m}\times \MM_{\nu,m}$, such that, for all $r\geq 0$, $[\bfU(r)] = [\bfcurZ(r)] \, [g_m]^T$ and $[\bfV(r)] = [\bfcurY(r)] \, [g_m]^T$ in which $[g_m]\in\MM_{N,m}$ is given by Definition~\ref{definition:5.1} and where $\{ ( [\bfU(r)],$ $[\bfV(r)] ), r\geq 0\}$ is the stochastic process with values in $\MM_{\nu,N}\times \MM_{\nu,N}$, introduced in Theorem~\ref{theorem:6.2}. Considering this change of stochastic processes, substituting them in equations \eqref{eq:6.1a} and \eqref{eq:6.1b},  and right multiplying these two equations by matrix $[a_m]$, yield equations \eqref{eq:6.2a} and \eqref{eq:6.2b}.
The initial conditions defined by equation \eqref{eq:6.2c} are similarly obtained.

\noindent (i) Proof of equation \eqref{eq:6.4}. For $m$ fixed, since the reduced representation of random matrix $[\bfH^N]$ (for which its pdf $p_{[\bfH^N]}$ is given by equation \eqref{eq:4.4}) is defined as the random matrix $[\bfH_m^N] = [\bfZ_m]\,[g_m]^T$ (see Definition~\ref{definition:6.3}), the theorem of the image of a measure by a measurable mapping allows for deducing equation \eqref{eq:6.4} of the pdf
 $p_{[\bfZ_m]}$ of random matrix $[\bfZ_m]$ with values in $\MM_{\nu,m}$.

\noindent (ii) Proof that $p_{[\bfZ_m],[\bfY_m]}([z],[y])\, d[z]\otimes d[y]$ defined by equation \eqref{eq:6.3}, with  $p_{[\bfZ_m]}([z])$ given by equation \eqref{eq:6.4}, is the invariant measure of equations \eqref{eq:6.2a} and \eqref{eq:6.2b}. For proving that, there are several possibilities. We chose to use an algebraic-based demonstration, which allows for introducing notations that will be reused in Proposition~\ref{proposition:6.7}. For simplifying the writing, the It\^o equation \eqref{eq:6.2a}-\eqref{eq:6.2b} is rewritten as the following second-order  stochastic differential equation that has to be read as an equality of generalized stochastic processes (see for instance, Chapter XI of \cite{Kree1986}),
\begin{equation}\label{eq:6.5}
  D^2_r [\bfcurZ] + \frac{1}{2} \, f_0\, D_r [\bfcurZ] + [\curL([\bfcurZ])] =  \sqrt{f_0}\,\, D_r[\bfW] \, [a_m] \, ,
\end{equation}
in which $D_r[\bfW]$ is the generalized normalized Gaussian white process resulting from the generalized derivative with respect to $r$ of the $\MM_{\nu,N}$-valued Wiener stochastic process defined in Notation~\ref{notation:6.1}. For $k=1,\ldots,\nu$, for $\alpha=1,\ldots , m$, and for
$\ell=1,\ldots, N$, we define
$\bfz^\alpha=(z_1^\alpha,\ldots,z_\nu^\alpha)\in\RR^\nu$ and
$\widehat\bfz^k=(\widehat z_1^k,\ldots,\widehat z_m^k)\in\RR^m$
with $z_k^\alpha =\widehat z_\alpha^k = [z]_{k\alpha}$.
Similarly, we define the real functions $(\bfz^1,\ldots,\bfz^m)\mapsto \Phi(\bfz^1,\ldots ,\bfz^m)$ on $\RR^\nu\times \ldots\times \RR^\nu$ and $(\widehat\bfz^1,\ldots,\widehat\bfz^\nu)\mapsto \widehat\Phi(\widehat\bfz^1,\ldots ,\widehat\bfz^\nu)$ on $\RR^m\times \ldots\times \RR^m$, such that
\begin{equation}\label{eq:6.6}
  \Phi(\bfz^1,\ldots ,\bfz^m)  = \! \sum_{\ell=1}^N \curV(\bfu^\ell) \,\, , \,\, \bfu^\ell \! = \! \sum_{\alpha=1}^m \bfz^\alpha\, g_\ell^\alpha
  \,\, , \,\, \widehat\Phi(\widehat\bfz^1,\ldots ,\widehat\bfz^\nu)  = \Phi(\bfz^1,\ldots ,\bfz^m) \, .
\end{equation}
Equation \eqref{eq:6.5} can be rewritten as $\nu$ coupled generalized stochastic equations on $\RR^m$,
$D^2_r \widehat\bfcurZ^k + \frac{1}{2} \, f_0\, D_r \widehat\bfcurZ^k +
   ([g_m]^T\,[g_m])^{-1} \, \boldsymbol{\nabla}_{\widehat\bfcurZ^k} \widehat\Phi(\widehat\bfcurZ^1,\ldots ,\widehat\bfcurZ^\nu)
  =  \sqrt{f_0}\,[a_m]^T D_r \widehat\bfW^k$,
with $k\in\{1,\ldots ,\nu\}$ and where $\{\widehat \bfW^k\}_\alpha = [\bfW]_{k\alpha}$.  Left multiplying this last equation by the invertible matrix
$[g_m]^T\,[g_m]\in\MM_m$ yields, for $k\in\{1,\ldots,\nu\}$, the following coupled equations,
$ [g_m]^T\,[g_m]\,D^2_r \widehat\bfcurZ^k + \frac{1}{2} \, f_0\,[g_m]^T\,[g_m]\, D_r \widehat\bfcurZ^k +
  \boldsymbol{\nabla}_{\widehat\bfcurZ^k} \widehat\Phi(\widehat\bfcurZ^1, $ $\ldots ,\widehat\bfcurZ^\nu)
  =  \sqrt{f_0}\,[g_m]^T D_r \widehat\bfW^k$.
Using the mathematical results given in Chapter XIII of \cite{Soize1994}, it can be deduced that the ISDE corresponding to the previous $\nu$ coupled generalized stochastic equations admits a unique invariant measure on $(\Pi_{k=1}^\nu \RR^m)  \times (\Pi_{k=1}^\nu \RR^m)$, defined by the following density with respect to
$(\otimes_{k=1}^\nu d\widehat\bfz^k)\otimes (\otimes_{k=1}^\nu d\widehat\bfy^k)$, which is
$  p(\widehat \bfz^1,\ldots,\widehat \bfz^\nu; \widehat \bfy^1,\ldots,\widehat \bfy^\nu)
  = \widehat c_{2\,\nu m} \exp\{-\frac{1}{2}\sum_{k=1}^\nu $ $<\! [g_m]^T\,[g_m]\,\widehat\bfy^k , \widehat\bfy^k\! >
         - \widehat\Phi(\widehat\bfz^1,\ldots ,\widehat\bfz^\nu)\}$.
Consequently, the joint pdf of the $\RR^\nu$-valued random variables $\bfZ^1,\ldots ,\bfZ^m$ with respect to $\otimes_{\alpha=1}^m d\bfz^\alpha$ is given, using the third equation \eqref{eq:6.6}, by
$p_{\bfZ^1,\ldots ,\bfZ^m}(\bfz^1,\ldots ,\bfz^m) = \widehat c_{\nu m} \exp\{-\Phi(\bfz^1,\ldots ,\bfz^m)\}$ and thus, using equation \eqref{eq:6.6}, the pdf of random matrix $[\bfZ_m]$ with respect to $d[z]$ is
$p_{[\bfZ_m]}([z]) = \widehat c_{\nu m} \exp\{-\sum_{\ell=1}^N \curV(\sum_{\alpha=1}^m \bfz^\alpha g_\ell^\alpha) \}$.
Using the expression of $\curV(\bfu^\ell)$ defined in Theorem~\ref{theorem:6.2} and introducing
$c_{\nu m} = \widehat c_{\nu m} / N^N$, this pdf can be rewritten as equation \eqref{eq:6.4}.

\noindent (iii) Proof of uniqueness of an asymptotic stationary and ergodic solution of equations \eqref{eq:6.2a} to \eqref{eq:6.2c}.
The use of Theorem 9 in Page 216 of \cite{Soize1994} yields the proof that  equations \eqref{eq:6.2a} to \eqref{eq:6.2c}  has a unique solution $\{( [\bfcurZ(r)],[\bfcurY(r)]), r\geq 0\}$ that is a second-order diffusion stochastic process, which is asymptotic, for $r\rightarrow +\infty$, to a unique stationary stochastic process $([\bfcurZ_\st],[\bfcurY_\st])$ having the properties given in  Theorem~\ref{theorem:6.6}.
The ergodicity of the stationary solution is directly deduced from \cite{Doob1953} or from \cite{Khasminskii2012}.
\end{proof}
\begin{proposition}[\textbf{Explicit expression of the pdf} ${\relax{ p_{[\bfZ_m]} }}$ \textbf{of} ${\relax{ [\bfZ_m] }}$]\label{proposition:6.7}
(i) The pdf $p_{[\bfZ_m]}$ of random matrix $[\bfZ_m]$ defined by equation \eqref{eq:6.4} can be rewritten, for all $[z]$ in $\MM_{\nu,m}$, as
\begin{equation}\label{eq:6.7}
p_{[\bfZ_m]}([z]) =\sum_{\bfj\in\curJ} p_\bfj(m) \, \prod_{k=1}^\nu p_{\widehat\bfZ^k}(\widehat\bfz^k;\bfj) \,\,\, , \,\,\,\widehat\bfz^k=(\widehat z^k_1,\ldots ,\widehat z^k_m)\in\RR^m \,\,\, ,\,\,\, \widehat z^k_\alpha = [z]_{k \alpha} \, ,
\end{equation}
in which for all $\bfj$ in $\curJ$ (see Definition~\ref{definition:4.1}),
\begin{subequations} \label{eq:6.8}
  \begin{align}
   p_\bfj(m) & = \gamma_\bfj(m) \, (\sum_{\bfj'\in\curJ} \gamma_{\bfj'}(m))^{-1} \quad , \quad \sum_{\bfj\in\curJ} p_\bfj(m)= 1 \, , \label{eq:6.8a}\\
   \gamma_\bfj(m) & =\exp\{-\frac{1}{2\, s^2} <  [I_N] - [G_m]\, ,\, [M_d(\bfj)] >_F\} \, . \label{eq:6.8b}
   \end{align}
\end{subequations}
Matrix $[G_m]\in\MM_N$ (see Definition~\ref{definition:5.3}) and the matrix $[M_d(\bfj)]\in\MM_N^{+0}$ is defined by
\begin{equation}\label{eq:6.9}
 [M_d(\bfj)]  = [\eta_d(\bfj)]^T\, [\eta_d(\bfj)] \quad ,\quad
 [M_d(\bfj)]_{\ell\ell'} = <\!\bfeta_d^{j_\ell} , \bfeta_d^{j_{\ell'}}\! > \, ,
\end{equation}
in which $[\eta_d(\bfj)]\in\MM_{\nu,N}$ is defined by equation \eqref{eq:4.3}. For all $k$ in $\{1,\ldots,\nu\}$,
$p_{\widehat\bfZ^k}(\cdot;\bfj)$ is the Gaussian pdf, such that, for all $\widehat\bfz^k$ in $\RR^m$,
\begin{equation}\label{eq:6.10}
p_{\widehat\bfZ^k}(\widehat\bfz^k;\bfj)= ((2\pi)^m \det[C_m])^{-1/2}\exp\{-\frac{1}{2}
\! <\! [C_m]^{-1}(\widehat\bfz^k \!- \!{\underline{\widehat{\bfz}}}^k(\bfj))\, , \, \widehat\bfz^k\! - \!{\underline{\widehat{\bfz}}}^k(\bfj)\!>\}
\, ,
\end{equation}
in which $[C_m] = \widehat s^2([g_m]^T\, [g_m])^{-1}\in\MM_m^+$, where
${\underline{\widehat{\bfz}}}^k(\bfj)= (\widehat s / s)\, [a_m]^T\, \widehat\bfeta_d^k(\bfj)\in \RR^m$
with $[a_m]\in\MM_{N,m}$ given by Definition~\ref{definition:5.3}, and where
$\widehat\bfeta_d^k(\bfj)= ( \widehat\eta_{d,1}^k(\bfj), \ldots , \widehat\eta_{d,N}^k(\bfj))$ $\in\RR^N$ with
$\widehat\eta_{d,\ell}^k(\bfj) = \eta_{d,k}^{j_\ell} = [\eta_d(\bfj)]_{k\ell}$ (see equation \eqref{eq:4.3}).
(ii) For all $\bfj$ in $\curJ$ and for all $1\leq m \leq N-1$, we have
$a_\bfj(m)\,  {\buildrel\hbox{\ppppcarac def}\over{=}} < \![I_N] - [G_m]\, , \,  [M_d(\bfj)]\! >_F \,\,\geq 0$,
$0 \! < \! \gamma_\bfj(m)\! < \! 1$,
$0 \! < \! p_\bfj(m) \! < \!1$,
and for $m=N$,  $a_\bfj(N) = 0$, $\gamma_\bfj(N) = 1$, and $ p_\bfj(N) = 1/N^N$.
\end{proposition}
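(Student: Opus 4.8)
The plan is to establish (i) by expanding the product over $\ell$ in \eqref{eq:6.4} into a single sum over the multi-index set $\curJ$ and then completing the square row by row, and to obtain (ii) directly from the projection properties of $[G_m]$ collected in Lemma~\ref{lemma:5.4}.

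First I would apply the distributive identity $\prod_{\ell=1}^N\sum_{j=1}^N(\cdots)=\sum_{\bfj\in\curJ}\prod_{\ell=1}^N(\cdots)$ to \eqref{eq:6.4}, which replaces the product of $N$ sums by a sum over $\bfj=(j_1,\ldots,j_N)\in\curJ$ of $\exp\{-\frac{1}{2\widehat s^2}\sum_{\ell=1}^N\Vert\frac{\widehat s}{s}\bfeta_d^{j_\ell}-\sum_{\alpha=1}^m\bfz^\alpha g_\ell^\alpha\Vert^2\}$. I would then reorganise the exponent by rows instead of columns: the $k$-th component reads $\sum_\alpha z_k^\alpha g_\ell^\alpha$, so that the $\RR^N$-vector $(\sum_\alpha z_k^\alpha g_1^\alpha,\ldots,\sum_\alpha z_k^\alpha g_N^\alpha)$ equals $[g_m]\,\widehat\bfz^k$, while $(\eta_{d,k}^{j_1},\ldots,\eta_{d,k}^{j_N})$ is precisely $\widehat\bfeta_d^k(\bfj)$. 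The exponent thus becomes $-\frac{1}{2\widehat s^2}\sum_{k=1}^\nu\Vert\frac{\widehat s}{s}\widehat\bfeta_d^k(\bfj)-[g_m]\,\widehat\bfz^k\Vert^2$, a decoupled sum of $\nu$ quadratics, one in each row $\widehat\bfz^k\in\RR^m$.

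Next I would complete the square in each $\widehat\bfz^k$. Since $[g_m]^T[g_m]$ is invertible (Definition~\ref{definition:5.3}), the minimiser of $\Vert\frac{\widehat s}{s}\widehat\bfeta_d^k(\bfj)-[g_m]\,\widehat\bfz^k\Vert^2$ is $\frac{\widehat s}{s}([g_m]^T[g_m])^{-1}[g_m]^T\widehat\bfeta_d^k(\bfj)=\frac{\widehat s}{s}[a_m]^T\widehat\bfeta_d^k(\bfj)={\underline{\widehat{\bfz}}}^k(\bfj)$, and, after division by $\widehat s^2$, the quadratic part carries the matrix $\widehat s^{-2}[g_m]^T[g_m]=[C_m]^{-1}$; this is exactly the Gaussian $p_{\widehat\bfZ^k}(\cdot;\bfj)$ of \eqref{eq:6.10}, up to its normalisation $((2\pi)^m\det[C_m])^{-1/2}$. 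The residual minimum value of the $k$-th quadratic equals $\frac{\widehat s^2}{s^2}\,\widehat\bfeta_d^k(\bfj)^T([I_N]-[g_m]([g_m]^T[g_m])^{-1}[g_m]^T)\widehat\bfeta_d^k(\bfj)$, in which I recognise $[G_m]=[g_m]([g_m]^T[g_m])^{-1}[g_m]^T$; note that the prefactor $\widehat s^2/s^2$ cancels the $1/\widehat s^2$ and leaves $1/s^2$. Summing these residual exponents over $k$, and using that the $\widehat\bfeta_d^k(\bfj)$ are the rows of $[\eta_d(\bfj)]$ together with the cyclicity of the trace, gives $\sum_k\widehat\bfeta_d^k(\bfj)^T([I_N]-[G_m])\widehat\bfeta_d^k(\bfj)=\tr\{([I_N]-[G_m])[\eta_d(\bfj)]^T[\eta_d(\bfj)]\}=<[I_N]-[G_m]\,,\,[M_d(\bfj)]>_F$, so the combined residual constant is exactly $\gamma_\bfj(m)$ of \eqref{eq:6.8b}. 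This yields $p_{[\bfZ_m]}([z])=c_{\nu m}\,((2\pi)^m\det[C_m])^{\nu/2}\sum_{\bfj\in\curJ}\gamma_\bfj(m)\prod_{k=1}^\nu p_{\widehat\bfZ^k}(\widehat\bfz^k;\bfj)$. Integrating over $\MM_{\nu,m}$ and using that $p_{[\bfZ_m]}$ and each $p_{\widehat\bfZ^k}(\cdot;\bfj)$ have unit mass forces $c_{\nu m}\,((2\pi)^m\det[C_m])^{\nu/2}=(\sum_{\bfj'\in\curJ}\gamma_{\bfj'}(m))^{-1}$, which reproduces the weights $p_\bfj(m)$ of \eqref{eq:6.8a} and establishes \eqref{eq:6.7}.

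For (ii), nonnegativity $a_\bfj(m)\geq 0$ follows because $[I_N]-[G_m]\in\MM_N^{+0}$ by Lemma~\ref{lemma:5.4}(v) and $[M_d(\bfj)]=[\eta_d(\bfj)]^T[\eta_d(\bfj)]\in\MM_N^{+0}$ as a Gram matrix, so that $\tr\{([I_N]-[G_m])[M_d(\bfj)]\}\geq 0$; writing it as $\sum_k\Vert([I_N]-[G_m])\widehat\bfeta_d^k(\bfj)\Vert^2$ (legitimate since $[I_N]-[G_m]$ is an orthogonal projection by Lemma~\ref{lemma:5.4}(iii)) makes this transparent. Consequently $0<\gamma_\bfj(m)=\exp\{-a_\bfj(m)/(2s^2)\}\leq 1$, the upper bound being strict exactly when some row of $[\eta_d(\bfj)]$ is not contained in the range of $[g_m]$, which is the generic case for $m<N$. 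Since every $\gamma_\bfj(m)>0$ and $\curJ$ has $N^N\geq 2$ elements, each normalised weight obeys $0<p_\bfj(m)<1$. For $m=N$, Lemma~\ref{lemma:5.4}(ii) gives $[G_N]=[I_N]$, whence $a_\bfj(N)=0$, $\gamma_\bfj(N)=1$, and $p_\bfj(N)=1/N^N$. I expect the main obstacle to be the algebraic bookkeeping in the completing-the-square step — specifically, correctly identifying the residual constant with $\gamma_\bfj(m)$ through the projection identity for $[G_m]$ and the trace manipulation, and tracking how $\widehat s^2/s^2$ collapses to $1/s^2$ — while part (ii) is then an immediate consequence of the spectral structure of $[G_m]$.
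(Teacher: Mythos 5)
Your proof is correct and follows essentially the same route as the paper's own proof: expand the product of sums in \eqref{eq:6.4} into a sum over the multi-index set $\curJ$, reorganize the exponent row-wise so it decouples into $\nu$ quadratics in the $\widehat\bfz^k$, complete the square in each one to extract the Gaussian factors $p_{\widehat\bfZ^k}(\cdot;\bfj)$ of \eqref{eq:6.10} together with the residual constants $\gamma_\bfj(m)$ (with the same cancellation $\frac{1}{2\widehat s^2}\cdot\frac{\widehat s^2}{s^2}=\frac{1}{2s^2}$), and then determine $c_{\nu m}$ by normalization to obtain the weights $p_\bfj(m)$. Your part (ii) is in fact more careful than the paper's terse "easy to do": as your genericity remark implicitly reveals, the strict bound $\gamma_\bfj(m)<1$ cannot hold for every $\bfj$ — for a constant multi-index $\bfj=(j,\ldots,j)$ every row of $[\eta_d(\bfj)]$ is proportional to $\bfun$, which is collinear with $\bfg^1$ and hence lies in the range of $[g_m]$ for all $m\geq 1$, giving $a_\bfj(m)=0$ and $\gamma_\bfj(m)=1$ — while the unconditional conclusions $a_\bfj(m)\geq 0$, $0<\gamma_\bfj(m)\leq 1$, and $0<p_\bfj(m)<1$ are exactly what your argument establishes.
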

\begin{proof}
(i) Equation \eqref{eq:6.4} writes
$ p_{[\bfZ_m]}([z])  = c_{\nu m} \sum_{\bfj\in\curJ} \prod_{k=1}^\nu
       \exp\{-\frac{1}{2\widehat s^2}\Vert\frac{\widehat s}{s}\widehat\bfeta_d^k(\bfj) - [g_m]\, \widehat\bfz^k\Vert^2\}$.
On the other hand,
$\forall k \in\{1,\ldots, \nu\}$, we have
$-(2\,\widehat s^2)^{-1}\Vert (\widehat s/s)\,\widehat\bfeta_d^k(\bfj) - [g_m]\, \widehat\bfz^k\Vert^2
= -(1/2) <\! [C_m]^{-1}(\widehat\bfz^k \!- \!{\underline{\widehat{\bfz}}}^k(\bfj))\, , \, \widehat\bfz^k\! - \!{\underline{\widehat{\bfz}}}^k(\bfj)\!>
+ (2\,s^2)^{-1} (<\! [G_m]\widehat\bfeta_d^k(\bfj)\, ,$ $\widehat\bfeta_d^k(\bfj)\!> - \Vert \widehat\bfeta_d^k(\bfj)\Vert^2 )$. Combining the two previous equations allows $p_{[\bfZ_m]}([z])$ to be rewritten as
$p_{[\bfZ_m]}([z])= c_{\nu m}\sum_{\bfj\in\curJ} \gamma_\bfj(m)\prod_{k=1}^\nu
\exp\{-\frac{1}{2}
\! <\! [C_m]^{-1}(\widehat\bfz^k \!- \!{\underline{\widehat{\bfz}}}^k(\bfj))\, , \, \widehat\bfz^k\! - \!{\underline{\widehat{\bfz}}}^k(\bfj)\!>\}$,
in which $\gamma_\bfj(m) = \prod_{k=1}^\nu \exp\{-\frac{1}{2\widehat s^2}( \Vert\widehat\bfeta_d^k(\bfj)\Vert^2
- <\! [G_m]\widehat\bfeta_d^k(\bfj)\, ,$ $\widehat\bfeta_d^k(\bfj)\!>)\}$, which can, finally, be rewritten as equation \eqref{eq:6.8b} with \eqref{eq:6.9}. 
The above expression of $p_{[\bfZ_m]}([z])$ is rewritten as
$p_{[\bfZ_m]}([z])=c_{\nu m} (2\pi)^{\nu m/2} (\det[C_m])^{\nu/2}$ $\sum_{\bfj\in\curJ}\gamma_\bfj(m)\prod_{k=1}^\nu
p_{\widehat\bfZ^k} (\widehat\bfz^k ; \bfj)$. The constant $c_{\nu m}$ of normalization is calculated by
$\int_{\MM_{\nu,m}} p_{[\bfZ_m]}([z]) \, d[z] = 1$. Since $\int_{\RR^m} p_{\widehat\bfZ^k} (\widehat\bfz^k ; \bfj)
\, d \widehat\bfz^k$ $ = 1$, we obtain
$c_{\nu m} = \{(2\pi)^{\nu m/2} (\det[C_m])^{\nu/2} \sum_{\bfj\in\curJ}\gamma_\bfj(m)\}^{-1}$. Equation \eqref{eq:6.7} can then be deduced in which $p_\bfj(m)$ is given by equation \eqref{eq:6.8a}.
Finally, $[M_d(\bfj)] =\sum_{k=1}^\nu \widehat\bfeta_d^k(\bfj)\otimes \widehat\bfeta_d^k(\bfj)  = [\eta_d(\bfj)]^T\, [\eta_d(\bfj)]$, which shows  that $[M_d(\bfj)]\in \MM_N^{+0}$ because  $\nu < N$ (see \eqref{sec:2}).
(ii) From Lemma~\ref{lemma:5.4}-(v), and using equation \eqref{eq:6.9}, it can be seen that $a_\bfj(m) \geq 0$. The end of the proof is easy to do.
\end{proof}
\begin{remark} [\textbf{About the algebraic representation of pdf} ${\relax{p_{[\bfZ_m]}}}$ \textbf{and its generator}] \label{remark:6.8}
(i) Equation \eqref{eq:6.7} shows that pdf $p_{[\bfZ_m]}$ on $\MM_{\nu,m}$ is a linear combination of $N^N$ products of $\nu$ Gaussian pdf on $\RR^N$. Consequently,  the use of the reduced-order ISDE given by Theorem~\ref{theorem:6.6} effectively allows realizations of random matrix $[\bfZ_m]$ to be generated, while a Gaussian generator that would be based on the representation given by equation \eqref{eq:6.7} is unthinkable.
(ii) The generation of $n_\MC \gg 1$ independent realizations $\{ [z^\ell], \ell=1,\ldots, n_\MC\}$ of random matrix $[\bfZ_m]$ is performed by using the MCMC generator defined by Theorem~\ref{theorem:6.6} in which the  reduced-order stochastic equations \eqref{eq:6.2a} to \eqref{eq:6.2c} are solved using the St\"{o}rmer-Verlet scheme \cite{Hairer2006,Burrage2007}, which is well adapted to stochastic Hamiltonian dynamical systems and which is detailed in \cite{Soize2016}. We can then deduce the learned dataset $\{ [\eta_\ar^\ell], \ell=1,\ldots, n_\MC\}$ of random matrix $[\bfH_m^N]$ such that $[\eta_\ar^\ell] = [z^\ell]\, [g_m]^T$, with an arbitrary value of realizations.
\end{remark}
\section{$L^2$-distance of random matrix $[\bfH_m^N]$ to matrix $[\eta_d]$ of the initial dataset and its analysis}
\label{sec:7}
In this section, $N$, $\nu$, $\kappa$, and $\varepsilon_\DM = \varepsilon_\opt$ are fixed. The optimal value of $m$ associated with $\varepsilon_\opt$ is $m_\opt$ as defined in \eqref{sec:5.2}. Integer $m$ varies in $\{1,\ldots , N\}$. The measure of the concentration of the probability measure $p_{[\bfH_m^N]}([\eta])\, d[\eta]$, which is informed by the initial dataset represented by matrix $[\eta_d]$, will be analyzed as a function of $m$ by using  the square $d_N^2(m)$ of the $L^2(\Theta,\MM_{\nu,N})$-distance between random matrix $[\bfH_m^N]$ and matrix $[\eta_d]$.
\begin{definition}[\textbf{Square of the relative distance} $d_N^2(m)$ \textbf{of} ${\relax{[\bfH_m^N]}}$ \textbf{to} ${\relax{[\eta_d]}}$]\label{definition:7.1}
For $m$ fixed, the square of the relative distance  of random matrix $[\bfH_m^N]$ with values in $\MM_{\nu,N}$ to matrix $[\eta_d]\in\MM_{\nu,N}$ is defined as $d_N^2(m) = E\{\Vert [\bfH_m^N] - [\eta_d]\Vert^2\} /E\{\Vert [\eta_d]\Vert^2\}$.
\end{definition}
The following Lemma gives the value of $d_N^2(N)$, which corresponds to the value of the distance if the PLoM method is not used ($m=N$). In this case, the MCMC generator of random matrix $[\bfH_N^N]$  is  given by Theorem~\ref{theorem:6.2}.
\begin{lemma}[\textbf{Value of} $d_N^2(m)$ \textbf{for} $m=N$]\label{lemma:7.2}
For $m=N$, the random matrix $[\bfH_N^N]$, which is an independent copy of random matrix $[\bfH^N]$ (see Definition~\ref{definition:4.2} and  Remark~\ref{remark:6.5}) is such that $E\{[\bfH_N^N]\} = [0_{\nu,N}]$, $E\{\Vert\, [\bfH_N^N]\,\Vert^2\} = \nu N$, and
the value of $d_N^2(m)$  for $m=N$ is
$d_N^2(N) = 1 + N/(N-1)$.
\end{lemma}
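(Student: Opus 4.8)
The plan is to assemble the three claims from structural facts already in hand, treating them as successive moment computations. First I would invoke Remark~\ref{remark:6.5}, which identifies $[\bfH_N^N]$ with an independent copy of $[\bfH^N]$, so that its columns $\bfH^1,\ldots,\bfH^N$ are independent copies of $\bfH^{(N)}$. Definition~\ref{definition:4.2} then supplies the two moments I need, namely $E\{\bfH^{(N)}\} = 0_\nu$ and $E\{\bfH^{(N)}\otimes\bfH^{(N)}\} = [I_\nu]$. The first assertion $E\{[\bfH_N^N]\} = [0_{\nu,N}]$ is immediate, since taking the expectation column by column gives $E\{\bfH^\ell\} = 0_\nu$ for each $\ell$. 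For the second assertion, I would write the Frobenius norm as a sum over columns, $\Vert [\bfH_N^N]\Vert^2 = \sum_{\ell=1}^N \Vert\bfH^\ell\Vert^2$, and use $E\{\Vert\bfH^\ell\Vert^2\} = \tr\{E\{\bfH^{(N)}\otimes\bfH^{(N)}\}\} = \tr\{[I_\nu]\} = \nu$; summing over the $N$ columns yields $E\{\Vert [\bfH_N^N]\Vert^2\} = \nu N$.

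The one step that is not purely automatic is the numerator of $d_N^2(N)$. I would expand the squared Frobenius distance as $\Vert [\bfH_N^N] - [\eta_d]\Vert^2 = \Vert [\bfH_N^N]\Vert^2 - 2 <\![\bfH_N^N],[\eta_d]\!>_F + \Vert [\eta_d]\Vert^2$, take expectations, and observe that the cross term vanishes: since $[\eta_d]$ is deterministic and $E\{[\bfH_N^N]\} = [0_{\nu,N}]$, we have $E\{<\![\bfH_N^N],[\eta_d]\!>_F\} = <\!E\{[\bfH_N^N]\},[\eta_d]\!>_F = 0$. This leaves $E\{\Vert [\bfH_N^N] - [\eta_d]\Vert^2\} = \nu N + \Vert\eta_d\Vert^2$, and by equation~\eqref{eq:2.4} the quantity $\Vert\eta_d\Vert^2$ equals $\nu(N-1)$.

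For the denominator of Definition~\ref{definition:7.1} I would note that $[\eta_d]$ is deterministic, so $E\{\Vert [\eta_d]\Vert^2\} = \Vert\eta_d\Vert^2 = \nu(N-1)$ by the same equation~\eqref{eq:2.4}. Assembling the ratio then gives $d_N^2(N) = (\nu N + \nu(N-1))/(\nu(N-1)) = (2N-1)/(N-1)$, which I would finally rewrite as $1 + N/(N-1)$ to match the stated form. There is no genuine obstacle here; the only point requiring any care is the cancellation of the cross term, which follows directly from the zero-mean property and the fact that $[\eta_d]$ is a fixed realization rather than a random matrix.
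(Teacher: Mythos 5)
Your proposal is correct and follows essentially the same route as the paper: column-wise moments of $\bfH^{(N)}$ for the first two claims, then expansion of the squared Frobenius distance with the cross term killed by the zero-mean property and $\Vert\eta_d\Vert^2 = \nu(N-1)$ from equation~\eqref{eq:2.4}. No gaps; the arithmetic $(\nu N + \nu(N-1))/(\nu(N-1)) = 1 + N/(N-1)$ matches the paper's conclusion exactly.
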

\begin{proof}
Note that $\bfH^1,\ldots ,\bfH^N$ are independent copies of $\bfH^{(N)}$ (see Definition~\ref{definition:4.2}).
(i) $E\{[\bfH_N^N]\} = E\{[\bfH^N]\} = [E\{\bfH^{(N)}\} \ldots E\{\bfH^{(N)}\}]$, and since $E\{\bfH^{(N)}\}=0_\nu$, we have $E\{[\bfH_N^N]\} = [0_{\nu,N}]$.
(ii) $E\{\Vert\, [\bfH_N^N]\, \Vert^2\}= E\{\Vert\,[\bfH^N]\,\Vert^2\} = \sum_{j=1}^N E\{\Vert\bfH^j\Vert^2\}$
$= N\, E\{ \Vert \bfH^{(N)}\Vert^2\}$, therefore, $E\{\Vert\,\bfH^{(N)}\,\Vert^2\}$ $ = \tr\{[I_\nu]\} = \nu$, and consequently, we have $E\{\Vert \,[\bfH_N^N]\, \Vert^2\} = \nu N$.
(iii) Using Definition~\ref{definition:7.1} and equation \eqref{eq:2.4} yields
$d_N^2(N) = (\nu(N-1))^{-1}\, (\, E\{\Vert \, [\bfH_N^N] \,\Vert^2\} -2 <\! E\{[\bfH_N^N]\} , [\eta_d] \!>_F + \Vert \eta_d\Vert^2)$. The result is obtained using (i), (ii), and equation \eqref{eq:2.4}.
\end{proof}
\begin{proposition}[\textbf{Expression of} $d_N^2(m)$]\label{proposition:7.3}
Let $m$ be fixed.  We have
\begin{subequations}\label{eq:7.1}
    \begin{align}
    E\{[\bfH_m^N]\} & = \sum_{\bfj\in\curJ} p_\bfj(m)\, \frac{\widehat s}{s} [\eta_d(\bfj)]\, [G_m]\in \MM_{\nu,N}\, , \label{eq:7.1a}\\
    E\{\Vert [\bfH_m^N] \vert^2\}  & = \sum_{\bfj\in\curJ} p_\bfj(m)\,(\nu\widehat s^2 m  +\frac{\widehat s^2}{s^2}
            < \![G_m]\, , [M_d(\bfj)] \!>_F) \, ,  \label{eq:7.1b}
    \end{align}
\end{subequations}
 in which $p_\bfj(m)$ is given by equation \eqref{eq:6.8a}, $[\eta_d(\bfj)]$ is defined by equation \eqref{eq:4.3}, $[G_m]$ by Definition~\ref{definition:5.3},  and $[M_d(\bfj)]$ by equation \eqref{eq:6.9}, and we have
 \begin{subequations}\label{eq:7.2}
   \begin{align}
   d_N^2(m) &= 1 +\frac{m\widehat s^2}{N-1} + \frac{1}{\Vert\eta_d\Vert^2}
   \sum_{\bfj\in \curJ} p_\bfj(m) < \! [G_m] \, , [B_d(\bfj)]\! >_F \, , \label{eq:7.2a}\\
   [B_d(\bfj)] &= \frac{\widehat s^2}{ s^2} \,[M_d(\bfj)] - 2 \frac{\widehat s}{s}\, [\eta_d(\bfj)]^T\,[\eta_d]  \in \MM_N  \, . \label{eq:7.2b}
   \end{align}
 \end{subequations}
The entries of $[B_d(\bfj)]$ are $[B_d(\bfj)]_{\ell\ell'} = (\widehat s^2/ s^2) < \!\bfeta_d^{j_\ell} , \bfeta_d^{j_{\ell'}}\! > - (2\widehat s/ s)< \!\bfeta_d^{j_\ell} , \bfeta_d^{\ell'}\! >$.
\end{proposition}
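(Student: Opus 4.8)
The plan is to push every computation through the linear relation $[\bfH_m^N]=[\bfZ_m]\,[g_m]^T$ of Definition~\ref{definition:6.3}, using the explicit Gaussian-mixture form of $p_{[\bfZ_m]}$ from Proposition~\ref{proposition:6.7}. First I would \emph{condition on the mixture index} $\bfj\in\curJ$: by equations \eqref{eq:6.7}--\eqref{eq:6.10}, given $\bfj$ the rows $\widehat\bfZ^1,\ldots,\widehat\bfZ^\nu$ of $[\bfZ_m]$ are independent $\RR^m$-valued Gaussian vectors, each with mean ${\underline{\widehat{\bfz}}}^k(\bfj)=(\widehat s/s)\,[a_m]^T\,\widehat\bfeta_d^k(\bfj)$ and common covariance $[C_m]=\widehat s^2([g_m]^T[g_m])^{-1}$, while $\bfj$ carries the weight $p_\bfj(m)$. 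The unconditional moments of $[\bfH_m^N]$ then follow by averaging the conditional moments against $p_\bfj(m)$, so that the whole proof reduces to (a) computing the conditional first and second moments and (b) re-expressing the resulting quadratic forms through the matrices $[G_m]$ and $[M_d(\bfj)]$.

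For the mean, since the $k$-th row of $[\bfH_m^N]$ is $(\widehat\bfZ^k)^T[g_m]^T$, conditioning on $\bfj$ and using the identity $[a_m]\,[g_m]^T=[G_m]$ (Definition~\ref{definition:5.3}) gives $E\{[\bfH_m^N]\mid\bfj\}=(\widehat s/s)\,[\eta_d(\bfj)]\,[G_m]$, because stacking the rows $(\widehat\bfeta_d^k(\bfj))^T[G_m]$ over $k$ reproduces $[\eta_d(\bfj)]\,[G_m]$; averaging over $\bfj$ yields \eqref{eq:7.1a}. For the second moment I would write $\Vert[\bfH_m^N]\Vert^2=\sum_{k=1}^\nu<\![g_m]^T[g_m]\,\widehat\bfZ^k,\widehat\bfZ^k\!>$ and apply the standard formula $E\{<\![A]X,X\!>\}=\tr\{[A]\,\mathrm{Cov}(X)\}+<\![A]E\{X\},E\{X\}\!>$ to each Gaussian row with $[A]=[g_m]^T[g_m]$. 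The covariance part collapses to $\tr\{[g_m]^T[g_m]\,[C_m]\}=\widehat s^2 m$ by the definition of $[C_m]$, contributing $\nu\widehat s^2 m$; the mean part, after using $[a_m]\,([g_m]^T[g_m])\,[a_m]^T=[G_m]$, becomes $(\widehat s/s)^2<\![G_m]\widehat\bfeta_d^k(\bfj),\widehat\bfeta_d^k(\bfj)\!>$, and summing over $k$ together with $\sum_{k=1}^\nu\widehat\bfeta_d^k(\bfj)\otimes\widehat\bfeta_d^k(\bfj)=[\eta_d(\bfj)]^T[\eta_d(\bfj)]=[M_d(\bfj)]$ turns it into $(\widehat s^2/s^2)<\![G_m],[M_d(\bfj)]\!>_F$. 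Averaging over $\bfj$ gives \eqref{eq:7.1b}.

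Finally, for $d_N^2(m)$ I would expand $\Vert[\bfH_m^N]-[\eta_d]\Vert^2=\Vert[\bfH_m^N]\Vert^2-2<\![\bfH_m^N],[\eta_d]\!>_F+\Vert\eta_d\Vert^2$, take expectations, and divide by $E\{\Vert[\eta_d]\Vert^2\}=\Vert\eta_d\Vert^2=\nu(N-1)$ from \eqref{eq:2.4}. The leading $\Vert\eta_d\Vert^2/\Vert\eta_d\Vert^2=1$ and the covariance piece $\nu\widehat s^2 m/(\nu(N-1))=m\widehat s^2/(N-1)$ produce the first two terms of \eqref{eq:7.2a}. The remaining two contributions---the $[M_d(\bfj)]$-term from $E\{\Vert[\bfH_m^N]\Vert^2\}$ and the cross term $-2<\!E\{[\bfH_m^N]\},[\eta_d]\!>_F$, the latter rewritten via $<\![\eta_d(\bfj)][G_m],[\eta_d]\!>_F=<\![G_m],[\eta_d(\bfj)]^T[\eta_d]\!>_F$---are both of the form $<\![G_m],\cdot\,\!>_F$, and combining them assembles exactly the matrix $[B_d(\bfj)]$ of \eqref{eq:7.2b}, giving the last term of \eqref{eq:7.2a}.

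I expect the main obstacle to be purely bookkeeping: correctly propagating the row/column conventions through $[\bfH_m^N]=[\bfZ_m][g_m]^T$ and verifying the three algebraic identities $[a_m][g_m]^T=[G_m]$, $[g_m]^T[g_m][C_m]=\widehat s^2[I_m]$, and $[a_m]([g_m]^T[g_m])[a_m]^T=[G_m]$, which are what convert the Gaussian parameters $({\underline{\widehat{\bfz}}}^k(\bfj),[C_m])$ into the intrinsic quantities $[G_m]$ and $[M_d(\bfj)]$. The one genuinely delicate point is the mixture averaging: because all rows share the \emph{same} index $\bfj$, the unconditional second moment is not the conditional one with averaged parameters, so the sum $\sum_{\bfj\in\curJ} p_\bfj(m)(\cdot)$ must be kept outside the quadratic forms rather than pushed into the means.
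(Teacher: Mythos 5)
Your proposal is correct and takes essentially the same route as the paper's proof: both exploit the Gaussian-mixture representation of $p_{[\bfZ_m]}$ from Proposition~\ref{proposition:6.7}, compute the per-component Gaussian first and second moments (mean ${\underline{\widehat{\bfz}}}^k(\bfj)$, covariance $[C_m]$), convert them into $[G_m]$ and $[M_d(\bfj)]$ via $[a_m]\,[g_m]^T=[G_m]$ and the definition of $[C_m]$, average over $\bfj$ with weights $p_\bfj(m)$, and finally expand $E\{\Vert [\bfH_m^N]-[\eta_d]\Vert^2\}$. The only cosmetic differences are that you phrase the mixture average as conditioning on $\bfj$ and invoke the standard quadratic-form identity $E\{<\![A]X,X\!>\}=\tr\{[A]\,\mathrm{Cov}(X)\}+<\![A]E\{X\},E\{X\}\!>$, whereas the paper writes the same computation as explicit integrals $[\curM_1^N(m)]$, $[\curM_2^N(m)]$ and uses the idempotence $[G_m]^2=[G_m]$ where you use $[a_m]\,([g_m]^T[g_m])\,[a_m]^T=[G_m]$.
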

\begin{proof}
Since $[\bfH_m^N] = [\bfZ_m]\, [g_m]^T$ (see Definition~\ref{definition:6.3}), it can be seen that
$E\{[\bfH_m^N]\}$ $ = [\curM_1^N(m)]\, [g_m]^T$ and $E\{\Vert [\bfH_m^N] \vert^2\} =  < \![\curM_2^N(m)]\, , [g_m]^T\, [g_m] \! >_F$, in which, using equation \eqref{eq:6.7} for $p_{[\bfZ_m]}$,
\begin{align*}
 [\curM_1^N(m)]&  = \sum_{\bfj\in\curJ} p_\bfj(m)
     \int_{\RR^m} \ldots \int_{\RR^m} [\widehat\bfz^1 \ldots \widehat\bfz^\nu]^T\,
       \otimes_{k=1}^\nu \{p_{\widehat\bfZ^k}(\widehat\bfz^k;\bfj) \,d\widehat\bfz^k \}\in\MM_{\nu,m} \, , \\
       [\curM_2^N(m)] & = \sum_{k'=1}^\nu \sum_{\bfj\in\curJ} p_\bfj(m)
            \prod_{k=1}^\nu \left\{ \int_{\RR^m} \widehat\bfz^{k'}\!\otimes \widehat\bfz^{k'}\,
                  p_{\widehat\bfZ^k}(\widehat\bfz^k;\bfj)\, d\widehat\bfz^k \right\}\in\MM_{\nu,m} \, .
 \end{align*}

\noindent (i) Calculation of $E\{[\bfH_m^N]\}$. Using equation \eqref{eq:6.10} and the expression of
${\underline{\widehat{\bfz}}}^k(\bfj)$ defined in Proposition~\ref{proposition:6.7} yield
$[\curM_1^N(m)]= \sum_{\bfj\in\curJ} p_\bfj(m)\, [{\underline{\widehat{\bfz}}}(\bfj)]^T$ with
$[{\underline{\widehat{\bfz}}}(\bfj)]=
[{\underline{\widehat{\bfz}}}^1(\bfj)\ldots {\underline{\widehat{\bfz}}}^\nu(\bfj) ]$ $ =
(\widehat s / s)\, [a_m]^T\, [\eta_d(\bfj)]^T$. Since $[a_m]\, [g_m]^T = [G_m]$ (see Definition~\ref{definition:5.3}), we obtain equation \eqref{eq:7.1a}.

 \noindent (ii) Calculation of $E\{\Vert [\bfH_m^N] \vert^2\}$. Equation \eqref{eq:6.10} shows that
 $\int_{\RR^m}  \widehat\bfz^{k}\!\otimes \widehat\bfz^{k} \, p_{\widehat\bfZ^k}(\widehat\bfz^k;\bfj)\, d\widehat\bfz^k$ $= [C_m] + {\underline{\widehat{\bfz}}}^k(\bfj) \otimes {\underline{\widehat{\bfz}}}^k(\bfj)$.
 Since $\int_{\RR^m} p_{\widehat\bfZ^k}(\widehat\bfz^k;\bfj)\, d\widehat\bfz^k  = 1$
 and that  $\sum_{k'=1}^\nu {\underline{\widehat{\bfz}}}^{k'}(\bfj) \otimes {\underline{\widehat{\bfz}}}^{k'}(\bfj)
 = [{\underline{\widehat{\bfz}}}(\bfj)]\, [{\underline{\widehat{\bfz}}}(\bfj)]^T$,
 we have  $[\curM_2^N(m)]= \sum_{\bfj\in\curJ} p_\bfj(m)\,( \nu\, [C_m] + [{\underline{\widehat{\bfz}}}(\bfj)]\, [{\underline{\widehat{\bfz}}}(\bfj)]^T)$.
 It can then be deduced that  $ E\{\Vert [\bfH_m^N] \vert^2\}   = \sum_{\bfj\in\curJ} p_\bfj(m)\,(\nu\, \curI_1(m) + \curI_2(m,\bfj))$.
 Using $[C_m] =\widehat s^2\, ([g_m]^T\, [g_m])^{-1}$ (see Proposition~\ref{proposition:6.7}) and the expression of $[G_m]$ given in Definition~\ref{definition:5.3} yield
 $\curI_1(m)= <\! [C_m]\, ,[g_m]^T [g_m] \!>_F = \widehat s^2\, \tr\{[G_m]\} = \widehat s^2 \, m$.
 On the other hand,
 $\curI_2(m,\bfj)= \Vert [{\underline{\widehat{\bfz}}}(\bfj)]^T \, [g_m]^T\Vert^2$
 and since $[{\underline{\widehat{\bfz}}}(\bfj)]^T = (\widehat s / s)\,  [\eta_d(\bfj)]\, [a_m]$,
 we have  $[{\underline{{\bfz}}}(\bfj)]^T\, [g_m]^T = (\widehat s / s)\,  [\eta_d(\bfj)]\, [G_m]$. Therefore,
  $\curI_2(m,\bfj)= (\widehat s^2 / s^2)\Vert [\eta_d(\bfj)]\,$ $[G_m]\Vert^2$.
 Since $[G_m]^2 = [G_m]$ (see Lemma~\ref{lemma:5.4}-(iii))
  and that $[M_d(\bfj)] = [\eta_d(\bfj)]^T\, [\eta_d(\bfj)]$ (see equation \eqref{eq:6.9}),
  we obtain  $\curI_2(m,\bfj)=  (\widehat s^2 / s^2) < \! [G_m]\, ,[M_d(\bfj)] \!>_F$. By substitution, we obtain equation \eqref{eq:7.1b}.

 \noindent (iii) Calculation of $d_N^2(m)$. From Definition~\ref{definition:7.1} of $d_N^2(m)$,  we have
 $d_N^2(m) = \Vert\eta_d\Vert^{-2}$ $ (  E\{ \Vert [\bfH_m^N ]\Vert^2\} -2 < \! E\{[\bfH_m^N]\} \, , [\eta_d] \!>_F + \Vert \eta_d\Vert^2)$. Using equations \eqref{eq:7.1a} and \eqref{eq:7.1b} allows for proving equation  \eqref{eq:7.2a} with \eqref{eq:7.2b}.
\end{proof}
In order to apply Proposition~\ref{proposition:7.3} for the case $m=N$, we need the results given in the following lemma.
\begin{lemma}\label{lemma:7.4}
Using  Definition~\ref{definition:4.1} and equations \eqref{eq:6.9} and  \eqref{eq:7.2b}, we have
\begin{subequations}\label{eq:7.3}
\begin{align}
  \frac{1}{N^N} & \, \sum_{\bfj\in\curJ}\,  [\eta_d(\bfj)] = [0_{\nu,N}]   \, , \label{eq:7.3a} \\
  \frac{1}{N^N} & \, \sum_{\bfj\in\curJ}\,  [M_d(\bfj)] = \frac{1}{N}  \Vert\eta_d\Vert^2 \, [I_N]   \,  \label{eq:7.3b} \\
  \frac{1}{N^N} & \, \sum_{\bfj\in\curJ}\,  [B_d(\bfj)] = \frac{\widehat s^2}{s^2}\frac{1}{N}  \Vert\eta_d\Vert^2 \, [I_N]  \, . \label{eq:7.3c}
\end{align}
\end{subequations}
\end{lemma}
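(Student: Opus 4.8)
The plan is to prove the three identities of equation \eqref{eq:7.3} in order, exploiting the product structure of the index set $\curJ = \{1,\ldots,N\}^N$ together with the two empirical moment conditions recorded in equation \eqref{eq:2.3}, namely $\bfm_N = \frac{1}{N}\sum_{j=1}^N\bfeta_d^j = 0_\nu$ and (via equation \eqref{eq:2.4}) $\Vert\eta_d\Vert^2 = \sum_{j=1}^N\Vert\bfeta_d^j\Vert^2$. The central observation is that, because $\sum_{\bfj\in\curJ} = \sum_{j_1=1}^N\cdots\sum_{j_N=1}^N$, any summand depending on $\bfj$ only through a small number of its components $j_\ell$ factorizes: each of the remaining free summations contributes a factor $N$.

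First I would establish \eqref{eq:7.3a} entrywise. The $(k,\ell)$ entry of $\sum_{\bfj\in\curJ}[\eta_d(\bfj)]$ is $\sum_{\bfj\in\curJ}\eta_{d,k}^{j_\ell}$, and since this summand depends on $\bfj$ only through the single index $j_\ell$, the sums over the other $N-1$ components give $N^{N-1}\sum_{j=1}^N\eta_{d,k}^{j} = N^{N}(\bfm_N)_k = 0$. Dividing by $N^N$ yields \eqref{eq:7.3a}.

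Next, for \eqref{eq:7.3b} I would compute the $(\ell,\ell')$ entry $\sum_{\bfj\in\curJ}<\!\bfeta_d^{j_\ell},\bfeta_d^{j_{\ell'}}\!>$ by distinguishing two cases. When $\ell\neq\ell'$ the summand involves the two independent indices $j_\ell$ and $j_{\ell'}$, so summing over the remaining $N-2$ components gives $N^{N-2}<\!\sum_{j=1}^N\bfeta_d^j,\sum_{j'=1}^N\bfeta_d^{j'}\!> = 0$, since $\sum_{j=1}^N\bfeta_d^j = N\bfm_N = 0_\nu$. When $\ell=\ell'$ the summand is $\Vert\bfeta_d^{j_\ell}\Vert^2$, depending on one index, so the sum equals $N^{N-1}\sum_{j=1}^N\Vert\bfeta_d^j\Vert^2 = N^{N-1}\Vert\eta_d\Vert^2$ by \eqref{eq:2.4}. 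Hence the $(\ell,\ell')$ entry of $N^{-N}\sum_{\bfj\in\curJ}[M_d(\bfj)]$ equals $N^{-1}\Vert\eta_d\Vert^2\,\delta_{\ell\ell'}$, which is exactly \eqref{eq:7.3b}.

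Finally, \eqref{eq:7.3c} follows by linearity from the definition \eqref{eq:7.2b}: averaging gives $N^{-N}\sum_{\bfj\in\curJ}[B_d(\bfj)] = (\widehat s^2/s^2)\,N^{-N}\sum_{\bfj\in\curJ}[M_d(\bfj)] - (2\widehat s/s)\,\bigl(N^{-N}\sum_{\bfj\in\curJ}[\eta_d(\bfj)]\bigr)^T[\eta_d]$. The first term is handled by \eqref{eq:7.3b}, while the second vanishes because \eqref{eq:7.3a} gives $N^{-N}\sum_{\bfj\in\curJ}[\eta_d(\bfj)] = [0_{\nu,N}]$, whose transpose times $[\eta_d]$ is the zero matrix. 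I do not expect a genuine obstacle here; the only point requiring care is the case split in \eqref{eq:7.3b} and the correct bookkeeping of the combinatorial multiplicities $N^{N-1}$ and $N^{N-2}$ produced by the free indices, both ultimately reducing to the centering $\bfm_N = 0_\nu$ and the norm identity \eqref{eq:2.4}.
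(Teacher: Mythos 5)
Your proof is correct and follows essentially the same route as the paper's: entrywise evaluation exploiting the product structure of $\curJ$ (free indices each contributing a factor $N$), the diagonal/off-diagonal case split for \eqref{eq:7.3b} reduced to the centering in equation \eqref{eq:2.3} and the norm identity \eqref{eq:2.4}, and linearity via \eqref{eq:7.2b} for \eqref{eq:7.3c}. Your version merely spells out the combinatorial bookkeeping that the paper states more tersely.
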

\begin{proof}
(i) Proof of equation \eqref{eq:7.3a}: for all fixed $\ell$, we have $N^{-N}\sum_{\bfj\in\curJ}  \bfeta_d^{j_\ell} =   N^{-N}\sum_{j_1}^N \ldots \sum_{j_N}^N  \bfeta_d^{j_\ell} = N^{-1}\sum_{j_\ell=1}^N  \bfeta_d^{j_\ell}=  0_\nu$, taking into account equation \eqref{eq:2.3}.
(ii) Proof of equation \eqref{eq:7.3b}: for $\ell=\ell'$, it can be seen that $N^{-N}\sum_{\bfj\in\curJ} [M_d(\bfj)] = \! N^{-N}\!\sum_{j_1}^N \ldots \sum_{j_N}^N \Vert \bfeta_d^{j_\ell}\Vert^2 = N^{-1}\sum_{j_\ell=1}^N  \Vert\bfeta_d^{j_\ell}\Vert^2$; for $\ell\not = \ell'$, it can bee seen that
$N^{-N}\!\sum_{\bfj\in\curJ} [M_d(\bfj)]$ $ =N^{-N+2} \sum_{\bfj\not = \{\bfj_\ell\cap\bfj_{\ell'}\}}\!
< \! N^{-1}\!\sum_{j_\ell=1}^N  \bfeta_d^{j_\ell} \, , N^{-1}\sum_{j_{\ell'}=1}^N  \bfeta_d^{j_{\ell'}} \!>  = 0$ due to equation \eqref{eq:7.3a};
grouping the two cases yields equation \eqref{eq:7.3b}.
(iii) Proof of equation \eqref{eq:7.3c}: this result can easily be deduced from equations \eqref{eq:7.2b}, \eqref{eq:7.3a}, and \eqref{eq:7.3b}.
\end{proof}
\begin{corollary}[\textbf{Value of} $d_N^2(N)$ \textbf{as a corollary of} Proposition~\ref{proposition:7.3}] \label{corollary:7.5}
Taking $m=N$ in equation \eqref{eq:7.2a} yields the value $d_N^2(N) = 1 + N/(N-1)$ given in Lemma~\ref{lemma:7.2}.
\end{corollary}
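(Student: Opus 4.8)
The plan is to specialize equation \eqref{eq:7.2a} to the value $m=N$ and to collapse the sum over the multi-index set $\curJ$ by combining the three facts that become available in this case. First I would invoke Lemma~\ref{lemma:5.4}-(ii) to replace $[G_N]$ by the identity $[I_N]$, and Proposition~\ref{proposition:6.7}-(ii) to replace each weight $p_\bfj(N)$ by the uniform value $1/N^N$. Because the projector $[G_N]=[I_N]$ no longer depends on $\bfj$ and the Frobenius inner product is linear in its second argument, the last term of \eqref{eq:7.2a} then becomes $\Vert\eta_d\Vert^{-2}\,<\! [I_N]\, , \, N^{-N}\sum_{\bfj\in\curJ}[B_d(\bfj)] \!>_F$.

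Next I would bring in Lemma~\ref{lemma:7.4}, whose equation \eqref{eq:7.3c} evaluates exactly this averaged sum as $N^{-N}\sum_{\bfj\in\curJ}[B_d(\bfj)] = (\widehat s^2/s^2)\, N^{-1}\Vert\eta_d\Vert^2\,[I_N]$. Substituting this and using $<\![I_N]\, ,[I_N]\!>_F = \tr\{[I_N]\} = N$, the last term of \eqref{eq:7.2a} reduces to $(\widehat s^2/s^2)\,N^{-1}\cdot N = \widehat s^2/s^2$, with the factor $\Vert\eta_d\Vert^2$ cancelling. Combining this with the middle term of \eqref{eq:7.2a} taken at $m=N$, I would then obtain $d_N^2(N) = 1 + N\widehat s^2/(N-1) + \widehat s^2/s^2$.

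The single step where one must genuinely use the definition of the modified bandwidth, rather than purely formal manipulations, is to verify that the two $\widehat s$-dependent terms collapse to $N/(N-1)$; this is the only real obstacle, and it is mild. For this I would use the explicit relation \eqref{eq:3.2}, namely $\widehat s^2 = s^2/(s^2 + (N-1)/N)$, so that $\widehat s^2/s^2 = 1/(s^2 + (N-1)/N)$. Writing $a = (N-1)/N$ and hence $N/(N-1) = 1/a$, the sum of the two terms is $s^2/(a(s^2+a)) + 1/(s^2+a) = (s^2+a)/(a(s^2+a)) = 1/a = N/(N-1)$, in which the bandwidth $s$ cancels entirely. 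This yields $d_N^2(N) = 1 + N/(N-1)$, matching the value of Lemma~\ref{lemma:7.2} and confirming that the general expression of Proposition~\ref{proposition:7.3} is consistent with the direct computation.
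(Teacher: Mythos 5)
Your proposal is correct and follows essentially the same route as the paper: specialize \eqref{eq:7.2a} to $m=N$ using Lemma~\ref{lemma:5.4}-(ii) ($[G_N]=[I_N]$) and Proposition~\ref{proposition:6.7}-(ii) ($p_\bfj(N)=1/N^N$), collapse the sum over $\curJ$ via equation \eqref{eq:7.3c} of Lemma~\ref{lemma:7.4} (which the paper introduces immediately before the corollary for exactly this purpose), and then use the definition \eqref{eq:3.2} of $\widehat s$ to reduce $N\widehat s^2/(N-1)+\widehat s^2/s^2$ to $N/(N-1)$. Your write-up simply fills in the algebraic details that the paper's one-line proof leaves to the reader.
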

\begin{proof}
The proof is easy to obtain using Proposition~\ref{proposition:6.7}-(ii), Lemma~\ref{lemma:5.4}-(ii),   equations \eqref{eq:7.2a} and \eqref{eq:7.2b} with $m=N$, and equation \eqref{eq:3.2}.
\end{proof}
\begin{definition}[\textbf{Matrix} ${\relax{[\eta_d^m]}}$ \textbf{and function} $\varepsilon_d$]\label{definition:7.6}
Let $[\eta_d]\in \MM_{\nu,N}$ be the matrix defined by equation \eqref{eq:4.2} and let $[G_m]\in\MM_N $ be the matrix  defined in Definition~\ref{definition:5.3}. For all $m$ such that $1\leq m\leq N$, the matrix $[\eta_d^m]\in\MM_N$ is defined by
\begin{equation}\label{eq:7.4}
 [\eta_d^m]  =[\eta_d]\, [G_m] \, .
\end{equation}
The function $m\mapsto \varepsilon_d(m)$ with values in $\RR^+$ is defined by
\begin{equation}\label{eq:7.5}
  \Vert [\eta_d] - [\eta_d^m]\Vert = \varepsilon_d(m)\, \Vert \eta_d\Vert \, .
\end{equation}
\end{definition}
\begin{lemma}[\textbf{Properties of } $m\mapsto\varepsilon_d(m)$ \textbf{and expression of} ${\relax{\Vert \eta_d^m\Vert^2}}$]\label{lemma:7.7}
(i) $m\mapsto \varepsilon_d(m)$ is a decreasing function from $\{1,\ldots , N\}$ into  $[0\, , 1]$ and we have
$\varepsilon_d(1) = 1$ and $\varepsilon_d(N) = 0$. (ii) For all $m$ such that $1\leq m\leq N$, the square of the Frobenius norm of matrix $[\eta_d^m]$ that is defined by equation \eqref{eq:7.4}, is written as
\begin{equation}\label{eq:7.6}
  \Vert \eta_d^m\Vert^2 = (1-\varepsilon_d(m)^2)\, \Vert \eta_d\Vert^2\, .
\end{equation}
\end{lemma}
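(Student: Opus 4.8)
The plan is to recognize $[G_m]$ as the orthogonal projector onto the nested family of subspaces $\curE_m = \mathrm{span}\{\bfg^1,\ldots,\bfg^m\}\subset\RR^N$ and to exploit the Pythagorean decomposition induced by the complementary projector $[I_N]-[G_m]$. All the structural input comes from Lemma~\ref{lemma:5.4}: $[G_m]$ is symmetric and idempotent, $[I_N]-[G_m]\in\MM_N^{+0}$, and $[G_N]=[I_N]$; recall also that $\Vert\eta_d\Vert^2=\nu(N-1)>0$ by \eqref{eq:2.4}, so all ratios are well defined.

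First I would prove (ii). Writing $[\eta_d]-[\eta_d^m]=[\eta_d]([I_N]-[G_m])$ and using the symmetry and idempotency of both $[G_m]$ and $[I_N]-[G_m]$, together with the cyclic invariance of the trace, I would obtain
\[\Vert\eta_d^m\Vert^2=\tr\{[\eta_d]^T[\eta_d]\,[G_m]\},\qquad \Vert[\eta_d]-[\eta_d^m]\Vert^2=\tr\{[\eta_d]^T[\eta_d]\,([I_N]-[G_m])\}.\]
Adding these and using $[G_m]+([I_N]-[G_m])=[I_N]$ gives the Pythagorean identity $\Vert\eta_d\Vert^2=\Vert\eta_d^m\Vert^2+\Vert[\eta_d]-[\eta_d^m]\Vert^2$. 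Substituting the definition \eqref{eq:7.5} of $\varepsilon_d(m)$ yields $\Vert\eta_d^m\Vert^2=(1-\varepsilon_d(m)^2)\Vert\eta_d\Vert^2$, which is \eqref{eq:7.6}.

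For the monotonicity in (i), the decisive observation is that $\curE_m\subset\curE_{m+1}$, so that $[G_{m+1}][G_m]=[G_m][G_{m+1}]=[G_m]$; hence $[G_{m+1}]-[G_m]$ is again a symmetric idempotent, i.e.\ an element of $\MM_N^{+0}$. Since $[\eta_d]^T[\eta_d]\in\MM_N^{+0}$ as well, and since $\tr\{AB\}\geq 0$ whenever $A,B\in\MM_N^{+0}$ (write $\tr\{AB\}=\tr\{A^{1/2}BA^{1/2}\}$), the increment $\Vert\eta_d^{m+1}\Vert^2-\Vert\eta_d^m\Vert^2=\tr\{[\eta_d]^T[\eta_d]([G_{m+1}]-[G_m])\}$ is nonnegative. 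Thus $m\mapsto\Vert\eta_d^m\Vert^2$ is nondecreasing and, by \eqref{eq:7.6}, $\varepsilon_d(m)$ is decreasing.

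The two endpoint values finish (i). For $m=N$, Lemma~\ref{lemma:5.4}-(ii) gives $[G_N]=[I_N]$, so $[\eta_d^N]=[\eta_d]$ and $\varepsilon_d(N)=0$. For $m=1$, the first diffusion-maps vector is the constant vector $\bfg^1=\bfpsi^1\propto\bfun$, so $[G_1]$ is the orthogonal projector onto $\mathrm{span}\{\bfun\}$, namely $[G_1]=N^{-1}\bfun\,\bfun^T$; then $[\eta_d^1]=N^{-1}([\eta_d]\,\bfun)\,\bfun^T=[0_{\nu,N}]$ because $[\eta_d]\,\bfun=\sum_{j=1}^N\bfeta_d^j=N\bfm_N=0_\nu$ by the centering \eqref{eq:2.3}. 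Hence $\Vert[\eta_d]-[\eta_d^1]\Vert=\Vert\eta_d\Vert$ and $\varepsilon_d(1)=1$. Finally $0\leq\Vert\eta_d^m\Vert^2\leq\Vert\eta_d\Vert^2$, the upper bound coming from the Pythagorean identity, forces $\varepsilon_d(m)^2\in[0,1]$, so the range is $[0,1]$. I expect the only genuinely delicate point to be the endpoint $\varepsilon_d(1)=1$: it is not a generic projection estimate but hinges specifically on $\bfg^1$ being the constant vector, combined with the fact that the PCA-normalized data are centered, so that the projection onto $\curE_1$ annihilates $[\eta_d]$.
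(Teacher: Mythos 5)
Your proof is correct, and for part (ii) and the two endpoint values it follows essentially the paper's own route: the identity $\Vert\eta_d^m\Vert^2 = (1-\varepsilon_d(m)^2)\Vert\eta_d\Vert^2$ rests in both cases on the symmetry and idempotency of $[G_m]$ (your Pythagorean packaging and the paper's expansion of \eqref{eq:7.5} together with $<\![\eta_d],[\eta_d^m]\!>_F=\Vert\eta_d^m\Vert^2$ are the same computation), and $\varepsilon_d(1)=1$, $\varepsilon_d(N)=0$ are obtained identically from $[G_1]=N^{-1}\bfun\otimes\bfun$ with the centering \eqref{eq:2.3}, and from $[G_N]=[I_N]$. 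Where you genuinely depart from the paper is the monotonicity step. The paper writes $[G_m]=\sum_{\alpha=1}^m\bfvarphi^\alpha\otimes\bfvarphi^\alpha$ (Lemma~\ref{lemma:5.4}-(iv)) and reads off that $\Vert[\eta_d]-[\eta_d^m]\Vert^2 = \Vert\eta_d\Vert^2-\sum_{\alpha=1}^m\Vert[\eta_d]\bfvarphi^\alpha\Vert^2$ loses one nonnegative term per unit decrease of $m$; this implicitly requires that a single orthonormal family $\{\bfvarphi^\alpha\}$ serves simultaneously as eigenvectors of all the $[G_m]$, i.e.\ that the projectors are nested, a fact the paper does not spell out. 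You instead prove the nesting directly: $\curE_m\subset\curE_{m+1}$ gives $[G_{m+1}][G_m]=[G_m][G_{m+1}]=[G_m]$, hence $[G_{m+1}]-[G_m]$ is a symmetric idempotent in $\MM_N^{+0}$, and the trace inequality $\tr\{AB\}\geq 0$ for $A,B\in\MM_N^{+0}$ yields $\Vert\eta_d^{m+1}\Vert^2\geq\Vert\eta_d^m\Vert^2$. Your version is marginally longer but makes explicit the one structural fact (nestedness of the projectors onto the $\curE_m$) on which the paper's shorter spectral argument silently relies; both establish monotonicity only in the broad (nonincreasing) sense, which is all the paper uses.
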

\begin{proof}
(i) From equation \eqref{eq:7.4} and Lemma~\ref{lemma:5.4}-(ii), it can be seen that $[\eta_d^N] = [\eta_d]$. Therefore, equation \eqref{eq:7.5} yields $\varepsilon_d(N)=0$. From \eqref{sec:5.1}, it can be seen that $\bfg^1= N^{-1/2}\Vert \bfpsi^1\Vert \,\bfun$ and using Definition~\ref{definition:5.3} yield $[G_1] = N^{-1}\, \bfun\otimes\bfun$. We have
$\Vert [\eta_d] - [\eta_d^1]\Vert^2=\Vert[\eta_d]([I_N] - [G_1])\Vert^2 = <\! [\eta_d]([I_N] - [G_1])\, , [\eta_d]([I_N] - [G_1])\!>_F
= $ $< \! [\eta_d]^T\, [\eta_d]\, , [I_N] - [G_1]\! >_F$ because $([I_N] - [G_1])^2 \! = [I_N] - 2[G_1] + [G_1]^2 = [I_N] - [G_1]$. Hence,
$\Vert  [\eta_d]  - [\eta_d^1] \Vert^2 \! = \Vert \eta_d \Vert^2 \! - N^{-1} \! < \! [\eta_d]^T  [\eta_d]\, , \bfun\otimes\bfun \! >_F$ and
$ < \![\eta_d]^T\, [\eta_d]\, , \bfun\otimes\bfun \! >_F = \Vert \sum_{j=1}^N \bfeta_d^j \Vert^2 = 0$
(due to the first equation \eqref{eq:2.3}), we deduce that $\Vert [\eta_d] - [\eta_d^1]\Vert^2 = \Vert\eta_d\Vert^2$, which proves that $\varepsilon_d(1) = 1$. Since $[G_m] = \sum_{\alpha=1}^m \bfvarphi^\alpha\otimes\bfvarphi^\alpha$ (see Lemma~\ref{lemma:5.4}-(iv)), we have $\Vert [\eta_d] - [\eta_d^m]\Vert^2
=< \! [\eta_d]^T\, [\eta_d]\, , [I_N] - [G_m]\! >_F
=\Vert\eta_d\Vert^2 -\sum_{\alpha=1}^m \Vert [\eta_d]\, \bfvarphi^\alpha\Vert^2$, which proves that $m\mapsto \varepsilon_d(m)$ is a decreasing function.
(ii) Developing the left-hand side of equation \eqref{eq:7.5} yields
$\Vert\eta_d\Vert^2 - 2 <\![\eta_d] \, , [\eta_d^m] \! >_F + \Vert\eta_d^m\Vert^2 = \varepsilon_d(m)^2 \Vert\eta_d\Vert^2$.
On the other hand, $<\![\eta_d] \, , [\eta_d^m] \! >_F  = <\![\eta_d] \, , [\eta_d]\, [G_m] \! >_F $ and since $[G_m] = [G_m]^2$ (see Lemma~\ref{lemma:5.4}-(iii)), it can be deduced that, $<\![\eta_d] \, , [\eta_d^m] \! >_F  = \Vert\eta_d^m\Vert^2$, and then equation \eqref{eq:7.6} holds.
\end{proof}
In Hypothesis~\ref{hypothesis:5.2}, based on the properties of $\{\lambda_\alpha(\varepsilon_\DM)\}_\alpha$, we have introduced the existence of optimal values $\varepsilon_\opt$ and $m_\opt$ of $\varepsilon_\DM$ and $m$, respectively. In the following hypothesis, we introduce the connection between $m\mapsto \varepsilon_d(m)$ and $\{\lambda_\alpha(\varepsilon_\DM)\}_\alpha$.
\begin{hypothesis}[\textbf{Relative to} $m\mapsto \varepsilon_d(m)$] \label{hypothesis:7.8}
Under Hypothesis~\ref{hypothesis:5.2}, it is assumed that $m_\opt$ is such that
$1 = \varepsilon_d(1) >  \ldots >\varepsilon_d(m_\opt-1) \gg \varepsilon_d(m_\opt) > \varepsilon_d(m_\opt+1)
> \ldots > \varepsilon_d(N) = 0$.
\end{hypothesis}
\begin{remark}[\textbf{Comments about hypotheses} \ref{hypothesis:5.2} \textbf{and} \ref{hypothesis:7.8} \textbf{related to} $m_\opt$]\label{remark:7.9}
Regarding Hypothesis~\ref{hypothesis:5.2} devoted to the existence of the optimal values $\varepsilon_\opt$ of $\varepsilon_\DM$ and  $m_\opt$ of $m$, \eqref{figure:1} (left) displays the graph of function $\alpha \mapsto \log(\lambda_\alpha(\varepsilon_\opt))$.
The graph of function $m\mapsto \varepsilon_d(m)$ corresponding to Hypothesis~\ref{hypothesis:7.8} is shown in \eqref{figure:1} (right). For $m> m_\opt$, we have $\varepsilon_d(m) \ll 1$ while $\varepsilon_d(1)=1$ and $\varepsilon_d(N)=0$.
\begin{figure}[htbp]
  \centering
  \includegraphics[width=6.0cm]{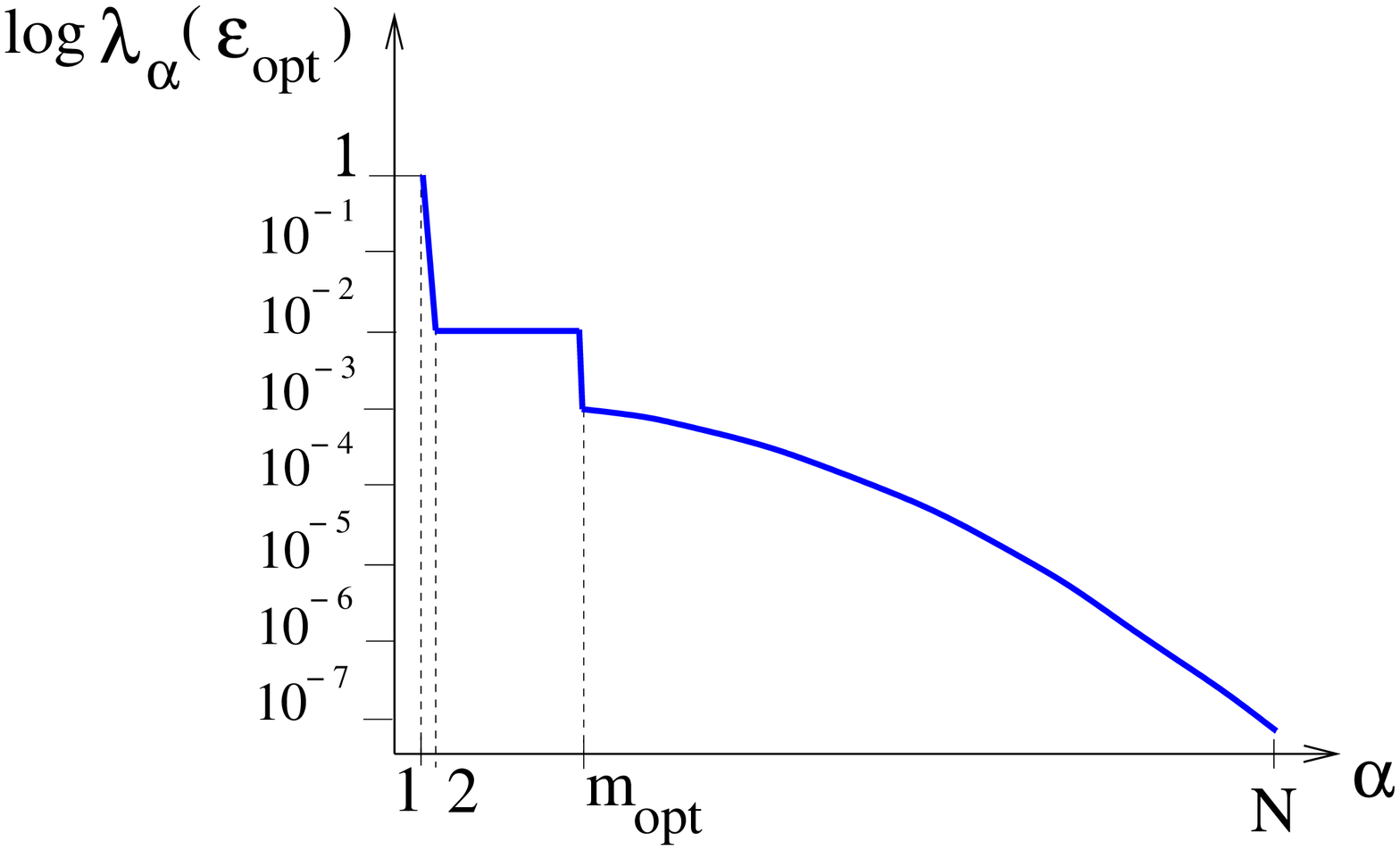} \includegraphics[width=5.2cm]{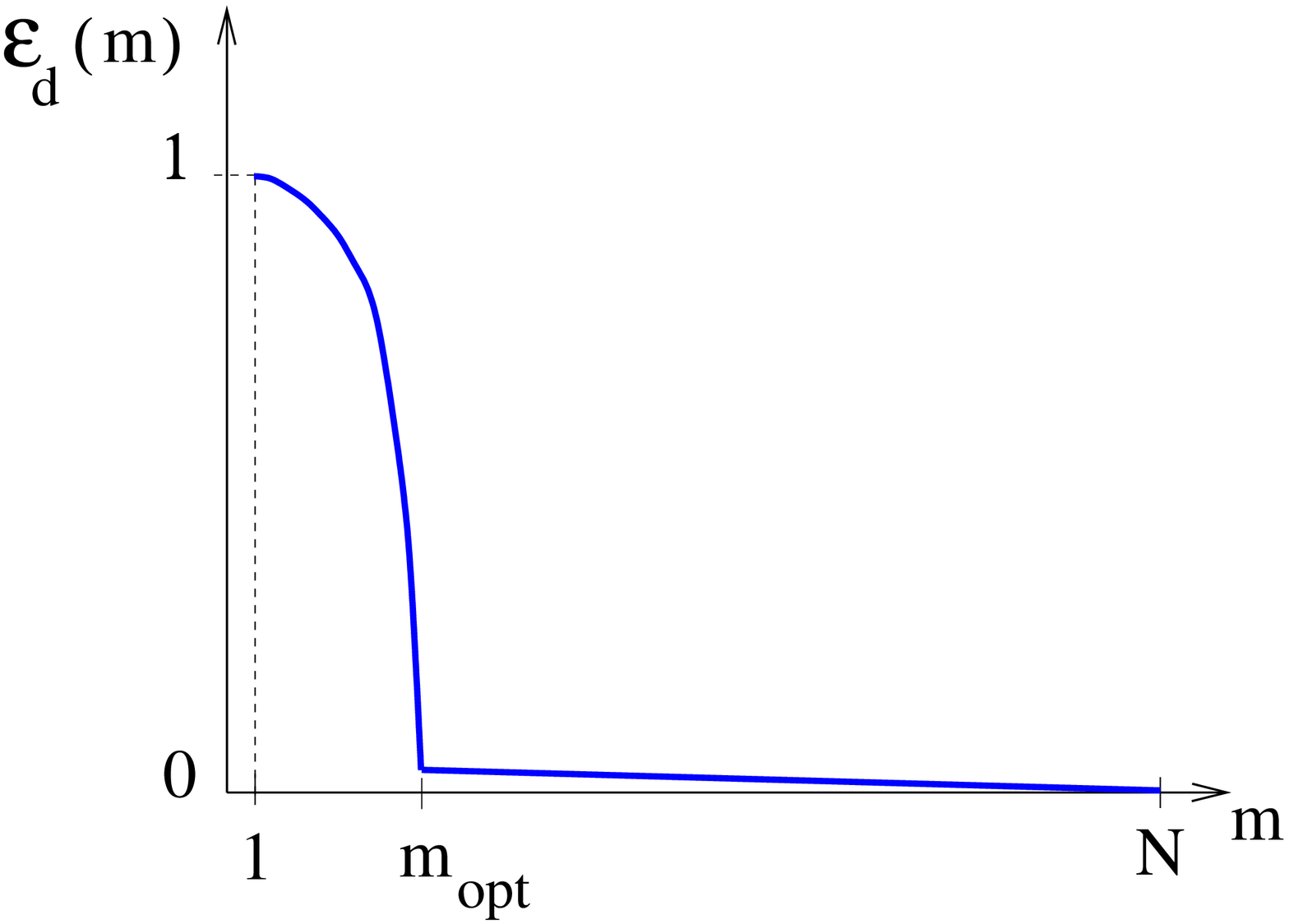}
  \caption{Left figure: for $\varepsilon_\DM = \varepsilon_\opt$, distribution of the eigenvalues $\lambda_\alpha(\varepsilon_\opt)$ in log scale as a function of rank $\alpha$. Right figure: graph of function $m\mapsto \varepsilon_d(m)$.}
  \label{figure:1}
\end{figure}
\end{remark}
\begin{proposition}[\textbf{Adapted expression of} $d_N^2(m)$]\label{proposition:7.10}
For all $m$ such that $1\leq m\leq N$, $d_N^2(m)$ given by equation \eqref{eq:7.2a} can be written as
\begin{subequations}\label{eq:7.7}
\begin{align}
  d_N^2(m) & = f_d(m) + h_d(m)\, , \label{eq:7.7a} \\
  f_d(m)   & = \frac{m\, \widehat s^2}{N-1} + \varepsilon_d(m)^2\, , \label{eq:7.7b} \\
  h_d(m)   & = \sum_{\bfj\in\curJ} p_\bfj(m)\, \frac{1}{\Vert\eta_d\Vert^2} \, \Vert [\eta_d^m] - \frac{\widehat s}{s} [\eta_d^m(\bfj)]\Vert^2\, , \label{eq:7.7c}
\end{align}
\end{subequations}
in which, $\forall \, \bfj\in\curJ$, $p_\bfj(m)$ is defined by equation \eqref{eq:6.8a} as a function of $\gamma_\bfj(m)$ (defined by equation \eqref{eq:6.8b}) that can be rewritten as
\begin{equation}\label{eq:7.8}
 \gamma_\bfj(m)  = \exp\{ -\frac{1}{2\, s^2} \Vert [\eta_d(\bfj)] - [\eta^m_d(\bfj)] \Vert^2\} \, ,
\end{equation}
where $[\eta^m_d(\bfj)] \in \MM_{\nu,N}$ is defined by
\begin{equation}\label{eq:7.9}
 [\eta^m_d(\bfj)]  = [\eta_d(\bfj)]\, [G_m]\, ,
\end{equation}
with $[\eta_d(\bfj)]\in\MM_{\nu,N}$ defined by equation \eqref{eq:4.3}.
\end{proposition}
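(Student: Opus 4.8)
The plan is to obtain both the reformulation \eqref{eq:7.8} of $\gamma_\bfj(m)$ and the decomposition \eqref{eq:7.7} directly from the expression \eqref{eq:7.2a} already proved in Proposition~\ref{proposition:7.3}, by systematically exploiting that $[G_m]$ is symmetric and idempotent (Lemma~\ref{lemma:5.4}-(i),(iii)), hence so is $[I_N]-[G_m]$, together with $[M_d(\bfj)] = [\eta_d(\bfj)]^T\,[\eta_d(\bfj)]$ from \eqref{eq:6.9}. Every identity below is a trace manipulation; the only thing needing attention is the bookkeeping of transposes and the placement of the $[G_m]$ factors inside the traces, and I do not expect any genuine obstacle.

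First I would establish \eqref{eq:7.8}. Starting from the exponent $a_\bfj(m) = \,<\![I_N]-[G_m]\,,\,[M_d(\bfj)]\!>_F\, = \tr\{([I_N]-[G_m])\,[\eta_d(\bfj)]^T\,[\eta_d(\bfj)]\}$, I insert a second factor $([I_N]-[G_m])$ at no cost by idempotency and move it by the cyclic property of the trace, obtaining $\tr\{([I_N]-[G_m])\,[\eta_d(\bfj)]^T\,[\eta_d(\bfj)]\,([I_N]-[G_m])\} = \Vert [\eta_d(\bfj)]\,([I_N]-[G_m])\Vert^2$. Since $[\eta^m_d(\bfj)] = [\eta_d(\bfj)]\,[G_m]$ by \eqref{eq:7.9}, this is exactly $\Vert [\eta_d(\bfj)] - [\eta^m_d(\bfj)]\Vert^2$, and substituting into \eqref{eq:6.8b} gives \eqref{eq:7.8}.

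Next I would rewrite the Frobenius term appearing in \eqref{eq:7.2a}. Using the definition \eqref{eq:7.2b} of $[B_d(\bfj)]$ and the same symmetric--idempotent manipulations, I would show $<\![G_m]\,,\,[M_d(\bfj)]\!>_F\, = \Vert [\eta^m_d(\bfj)]\Vert^2$ and $<\![G_m]\,,\,[\eta_d(\bfj)]^T\,[\eta_d]\!>_F\, = \,<\![\eta^m_d(\bfj)]\,,\,[\eta_d^m]\!>_F$, the latter by inserting $[G_m]$ on both sides of the product and recalling $[\eta_d^m]=[\eta_d]\,[G_m]$ from \eqref{eq:7.4}. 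Substituting these two identities and completing the square in $\tfrac{\widehat s}{s}[\eta^m_d(\bfj)]$ then yields $<\![G_m]\,,\,[B_d(\bfj)]\!>_F\, = \Vert [\eta_d^m] - \tfrac{\widehat s}{s}[\eta^m_d(\bfj)]\Vert^2 - \Vert [\eta_d^m]\Vert^2$.

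Finally I would sum this identity against $p_\bfj(m)$ and divide by $\Vert\eta_d\Vert^2$. The quadratic term reproduces exactly $h_d(m)$ of \eqref{eq:7.7c}. For the remaining term I use the normalization $\sum_{\bfj\in\curJ} p_\bfj(m) = 1$ from \eqref{eq:6.8a}, so it collapses to $-\Vert [\eta_d^m]\Vert^2/\Vert\eta_d\Vert^2$, and then Lemma~\ref{lemma:7.7}-(ii) replaces $\Vert [\eta_d^m]\Vert^2$ by $(1-\varepsilon_d(m)^2)\Vert\eta_d\Vert^2$. The constant $1$ carried over from \eqref{eq:7.2a} cancels the resulting $-1$, leaving $d_N^2(m) = \tfrac{m\,\widehat s^2}{N-1} + \varepsilon_d(m)^2 + h_d(m) = f_d(m)+h_d(m)$, which is precisely \eqref{eq:7.7}.
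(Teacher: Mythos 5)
Your proposal is correct and follows essentially the same route as the paper's own proof: both rewrite $<\![G_m],[B_d(\bfj)]\!>_F$ and $<\![I_N]-[G_m],[M_d(\bfj)]\!>_F$ as squared Frobenius norms via the symmetry and idempotency of $[G_m]$ (Lemma~\ref{lemma:5.4}), then substitute into \eqref{eq:7.2a} using $\sum_{\bfj\in\curJ} p_\bfj(m)=1$ and \eqref{eq:7.6}. The only difference is the order of the two steps (you prove \eqref{eq:7.8} first, the paper proves it last), which is immaterial.
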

\begin{proof}
(i) From equations \eqref{eq:7.2b} and \eqref{eq:6.9}, it can be seen that
$< \! [G_m] \, ,  [B_d(\bfj)] \! >_F $ $ =
(\widehat s / s)^2 <\! [G_m]\, ,  [\eta_d(\bfj)]^T [\eta_d(\bfj)] \!>_F - 2\, (\widehat s / s) <\! [G_m]\, ,  [\eta_d(\bfj)]^T [\eta_d] \!>_F $.
Since $[G_m] = [G_m]^2$ (see Lemma~\ref{lemma:5.4}-(iii)), we obtain
$< \! [G_m] \, ,  [B_d(\bfj)] \! >_F =
(\widehat s / s)^2 \Vert [ \eta_d(\bfj)]\,$ $[G_m] \Vert^2  - 2\, (\widehat s / s) <\! [\eta_d(\bfj)]\,[G_m]\, ,   [\eta_d] \,[G_m]\!>_F $,
which can be rewritten, using equations \eqref{eq:7.4} and \eqref{eq:7.9}, as
$< \! [G_m] \, ,  [B_d(\bfj)] \! >_F =
 \Vert  [\eta^m_d]  -  (\widehat s / s) \,[\eta^m_d(\bfj)]\Vert^2 - \Vert\eta^m_d\Vert^2$.
 By substitution into equation \eqref{eq:7.2a}, since $\sum_{\bfj\in\curJ} p_\bfj(m)=1$, and using equation \eqref{eq:7.6} yield
 equations \eqref{eq:7.7a} to \eqref{eq:7.7b}.
 (ii) We have $([I_N] - [G_m])^2 = [I_N] -2\, [G_m] + [G_m]^2 = [I_N] - [G_m]$.
 Consequently,  $< \! [I_N] - [G_m]\, , [M_d(\bfj)] \!>_F = $ $< \! ([I_N] - [G_m])^2\, , [\eta_d(\bfj)]^T [\eta_d(\bfj)] \!>_F
 =\Vert [\eta_d(\bfj)]\, ([I_N ]- [G_m])\Vert^2$. Using equation \eqref{eq:7.9} allows
 $< \! [I_N] - [G_m]\, , [M_d(\bfj)] \!>_F $ $= \Vert [\eta_d(\bfj)]  - [\eta^m_d(\bfj)]\Vert^2$
 to be written.
 By substitution  into equation \eqref{eq:6.8b} yields \eqref{eq:7.8}.
\end{proof}
\begin{lemma}[\textbf{Rewriting function} $h_d$]\label{lemma:7.11}
For all $m$ such that $1\leq m\leq N$ and for all $\bfj$ in $\curJ$, let $g_\bfj(m)$, $\underline g(m)$, $\underline \gamma(m)$, and $r(m)$ be defined by
\begin{equation}\label{eq:7.10}
 g_\bfj(m) = \!\frac{\Vert [\eta_d^m] \! - \!\frac{\widehat s}{s} [\eta_d^m(\bfj)]\Vert^2}{\Vert\eta_d\Vert^2} \, , \,\,
 \underline g(m) =\! \frac{1}{N^N}\sum_{\bfj\in\curJ} g_\bfj(m) \, , \,\,
 \underline \gamma(m)=\!\frac{1}{N^N}\sum_{\bfj\in\curJ}\gamma_\bfj(m) \, ,
\end{equation}
in which $\gamma_\bfj(m)$ is given by equation \eqref{eq:7.8}, and
\begin{equation}\label{eq:7.11}
  r(m) \, {\buildrel\hbox{\ppppcarac def}\over{=}} \,\sum_{\bfj\in\curJ} p_\bfj(m)\, \frac{g_\bfj(m)}{\underline g(m)} =
                  \frac{\frac{1}{N^N}\sum_{\bfj\in\curJ} \gamma_\bfj(m)\, g_\bfj(m) }{\underline \gamma(m)\,\underline g(m)} \, ,
\end{equation}
in which  $p_\bfj(m)$ is defined by equation \eqref{eq:6.8a}. Using these definitions,
we have
\begin{equation}\label{eq:7.12}
 \underline g(m) = 1 +\frac{\widehat s^2}{s^2}\frac{m}{N} -\varepsilon_d(m)^2 \quad , \quad  \underline g(m) > 0 \quad , \quad
 \underline g(N) = 1 +\frac{\widehat s^2}{s^2}\, ,
\end{equation}
in which $\varepsilon_d(m)$ is defined by equation \eqref{eq:7.5}, and
\begin{equation}\label{eq:7.13}
 \underline \gamma(m)> 0 \quad , \quad   \underline \gamma(N) =1 \quad , \quad r(m) >0 \quad , \quad  r(N) = 1 \, .
\end{equation}
Function $m\mapsto h_d(m)$ defined by equation \eqref{eq:7.7c} can be rewritten as
\begin{equation}\label{eq:7.14}
h_d(m) = r(m)\, \underline g(m)\, .
\end{equation}
\end{lemma}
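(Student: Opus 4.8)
The plan is to treat the four identities in the order \eqref{eq:7.14}, then \eqref{eq:7.12}, then \eqref{eq:7.13}, using throughout that $[G_m]$ is a symmetric idempotent of rank $m$ (Lemma~\ref{lemma:5.4}) and that the averaging identities \eqref{eq:7.3a} and \eqref{eq:7.3b} of Lemma~\ref{lemma:7.4} collapse the sums over $\curJ$ of $[\eta_d(\bfj)]$ and $[M_d(\bfj)]$ into closed forms. First I would dispose of \eqref{eq:7.14}, which is purely formal: comparing the definition \eqref{eq:7.7c} of $h_d(m)$ with the first expression for $r(m)$ in \eqref{eq:7.11} shows at once that $h_d(m) = \sum_{\bfj\in\curJ} p_\bfj(m)\, g_\bfj(m) = r(m)\,\underline g(m)$. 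The equality of the two forms of $r(m)$ in \eqref{eq:7.11} follows by substituting $p_\bfj(m) = \gamma_\bfj(m)/(N^N\,\underline\gamma(m))$, which is \eqref{eq:6.8a} combined with the definition \eqref{eq:7.10} of $\underline\gamma(m)$.

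The computational core is \eqref{eq:7.12}. I would expand the squared Frobenius norm defining $g_\bfj(m)$ as $\Vert\eta_d^m\Vert^2 - 2(\widehat s/s)<[\eta_d^m]\, ,[\eta_d^m(\bfj)]>_F + (\widehat s^2/s^2)\Vert[\eta_d^m(\bfj)]\Vert^2$ and then average over $\bfj$. Using \eqref{eq:7.4}, \eqref{eq:7.9}, the symmetry $[G_m]^T=[G_m]$ and the idempotence $[G_m]^2=[G_m]$ (Lemma~\ref{lemma:5.4}), the cross term reduces to $<[G_m]\, ,[\eta_d]^T\,[\eta_d(\bfj)]>_F$ and the quadratic term to $<[G_m]\, ,[M_d(\bfj)]>_F$. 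Then \eqref{eq:7.3a} gives $N^{-N}\sum_\bfj[\eta_d(\bfj)] = [0_{\nu,N}]$, which annihilates the cross term, while \eqref{eq:7.3b} gives $N^{-N}\sum_\bfj[M_d(\bfj)] = N^{-1}\Vert\eta_d\Vert^2[I_N]$, so the quadratic term averages to $N^{-1}\Vert\eta_d\Vert^2\,\tr\{[G_m]\} = (m/N)\Vert\eta_d\Vert^2$ using $\tr\{[G_m]\}=m$. Combining with \eqref{eq:7.6} for $\Vert\eta_d^m\Vert^2$ and dividing by $\Vert\eta_d\Vert^2$ yields $\underline g(m) = 1 + (\widehat s^2/s^2)(m/N) - \varepsilon_d(m)^2$; setting $m=N$ and using $\varepsilon_d(N)=0$ (Lemma~\ref{lemma:7.7}) gives $\underline g(N) = 1 + \widehat s^2/s^2$. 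Strict positivity $\underline g(m)>0$ then follows because $\varepsilon_d(m)\in[0,1]$ (Lemma~\ref{lemma:7.7}) forces $1-\varepsilon_d(m)^2\geq 0$, whereas the remaining term $(\widehat s^2/s^2)(m/N)$ is strictly positive.

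Finally, for \eqref{eq:7.13} I would note that $\underline\gamma(m)>0$ because each $\gamma_\bfj(m)$ in \eqref{eq:7.8} is an exponential, hence strictly positive, and that $\underline\gamma(N)=1$ and $r(N)=1$ both follow from Proposition~\ref{proposition:6.7}-(ii), which gives $\gamma_\bfj(N)=1$ and $p_\bfj(N)=1/N^N$ for every $\bfj$ (so $r(N) = N^{-N}\sum_\bfj g_\bfj(N)/\underline g(N) = \underline g(N)/\underline g(N) = 1$). For $r(m)>0$ I would observe that the numerator $N^{-N}\sum_\bfj\gamma_\bfj(m)\,g_\bfj(m)$ in \eqref{eq:7.11} is a sum of nonnegative terms $g_\bfj(m)$ with strictly positive weights $\gamma_\bfj(m)$, and it cannot vanish: were all $g_\bfj(m)$ zero, then $\underline g(m)$ would be zero, contradicting the strict positivity just established; hence at least one $g_\bfj(m)>0$, the numerator is positive, and the denominator $\underline\gamma(m)\,\underline g(m)$ is positive as well.

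The only genuinely non-formal step is the averaging in \eqref{eq:7.12}; everything else is bookkeeping. I therefore expect the main care to go into correctly reducing the cross and quadratic terms to Frobenius pairings against $[G_m]$ (via symmetry and idempotence) before invoking Lemma~\ref{lemma:7.4}, since a sign or transpose slip there is the one place the argument could go wrong.
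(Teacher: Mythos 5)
Your proposal is correct and follows essentially the same route as the paper's proof: the core computation for \eqref{eq:7.12} (expanding the squared Frobenius norm, reducing the cross and quadratic terms to pairings against $[G_m]$ via symmetry and idempotence, then collapsing the averages with \eqref{eq:7.3a} and \eqref{eq:7.3b} and inserting \eqref{eq:7.6}) is exactly the paper's argument. The only difference is that you spell out the points the paper dismisses as ``easy to prove'' --- in particular your derivation of $\underline g(m)>0$ from the explicit formula \eqref{eq:7.12} and your contradiction argument for $r(m)>0$ are cleaner than the paper's terse appeal to \eqref{eq:7.10}, which strictly speaking only gives nonnegativity.
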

\begin{proof}
Using equation \eqref{eq:7.10} yields
$\underline g(m) = \Vert\eta_d\Vert^{-2} \{ N^{-N}\sum_{\bfj\in\curJ}\Vert\eta_d^m\Vert^2 +
    (\widehat s^2 / s^2)$ $ N^{-N}  \sum_{\bfj\in\curJ}\Vert\eta_d^m(\bfj)\Vert^2
    - 2 (\widehat s /s) < \! [\eta_d^m] \, , N^{-N}\sum_{\bfj\in\curJ} [\eta_d^m(\bfj)]\! >_F   \}$.
It can be seen that $\Vert \eta_d^m(\bfj)\Vert^2 = <\! [\eta_d(\bfj)]\, [G_m]\, , [\eta_d(\bfj)]\, [G_m]\! >_F$. Since $[G_m]^2=[G_m]$ (Lemma~\ref{lemma:5.4}-(iii)) and
$[\eta_d(\bfj)]^T [\eta_d(\bfj)] = [M_d(\bfj)]$ (equation \eqref{eq:6.9}), we have
$N^{-N} \!\sum_{\bfj\in\curJ}\Vert\eta_d^m(\bfj)\Vert^2 = $ $<\! N^{-N} \sum_{\bfj\in\curJ}[M_d(\bfj)]\, , [G_m]\! >_F$
that can be rewritten, using \eqref{eq:7.3b} and Lemma~\ref{lemma:5.4}-(i), as
$N^{-N} \!\! \sum_{\bfj\in\curJ}\! \Vert\eta_d^m(\bfj)\Vert^2 $ $= N^{-1} \Vert\eta_d\Vert^2 \,\tr\{ [G_m]\} = (m/N)\Vert\eta_d\Vert^2$.
 Substituting $\Vert\eta_d^m\Vert^2$ given by equation \eqref{eq:7.6} in the above expression of $\underline g(m)$ and using equation \eqref{eq:7.3a}
yield equation \eqref{eq:7.12}.
 Due to equation \eqref{eq:7.10}, $\underline g(m) > 0$ and since $\varepsilon_d(N)=0$ (see Lemma~\ref{lemma:7.7}-(i)), the first equation \eqref{eq:7.12} with $m=N$ yields the third equation \eqref{eq:7.12}.
Since $[G_N] = [I_N]$ (see Lemma~\ref{lemma:5.4}-(ii)) and using the definition of $\gamma_\bfj(m)$ given by equation \eqref{eq:6.8b} yield $\gamma_\bfj(N)=1$.
 The other results of the lemma are easy to prove.
\end{proof}
\begin{lemma}[\textbf{Property of function} $f_d$] \label{lemma:7.12}
For  $N$ and $\nu$ fixed, let $\widehat s$ be defined by equation \eqref{eq:3.2}.
Function $m\mapsto f_d(m)$ from $\{1,\ldots , N\}$ into $\RR^+$, defined by equation \eqref{eq:7.7b}, is such that
$f_d(1) = 1 +\widehat s^2/(N-1)$ and $f_d(N) = N\,\widehat s^2 / (N-1)$.  Let $m_\opt$ be the value of $m$ defined in Hypothesis~\ref{hypothesis:5.2}.
If
\begin{equation}\label{eq:7.15}
  \varepsilon_d(m_\opt)^2 < \frac{\widehat s^2}{N-1} < \varepsilon_d(m_\opt -1)^2 \, ,
\end{equation}
then function $m\mapsto f_d(m)$ has a unique local minimum that is a global minimum, which is reached for $m=m_\opt$,
\begin{equation}\label{eq:7.16}
  m_\opt = \arg \min_{1\leq m\leq N} f_d(m)\, .
\end{equation}
\end{lemma}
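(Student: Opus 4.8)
The plan is to study $f_d$ through its forward difference. Writing $\Delta(m):=f_d(m)-f_d(m-1)$ for $2\le m\le N$ and using the definition \eqref{eq:7.7b},
\[
\Delta(m)=\frac{\widehat s^2}{N-1}-\bigl(\varepsilon_d(m-1)^2-\varepsilon_d(m)^2\bigr),
\]
so that, setting $\delta(m):=\varepsilon_d(m-1)^2-\varepsilon_d(m)^2\ge 0$ (nonnegativity coming from the monotonicity of $m\mapsto\varepsilon_d(m)^2$ in Lemma~\ref{lemma:7.7}(i)), the sign of $\Delta(m)$ is decided by whether the one-step decrement $\delta(m)$ exceeds the constant slope $\widehat s^2/(N-1)$ of the linear part of $f_d$. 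The endpoint values are immediate: substituting $\varepsilon_d(1)=1$ and $\varepsilon_d(N)=0$ from Lemma~\ref{lemma:7.7}(i) into \eqref{eq:7.7b} gives $f_d(1)=1+\widehat s^2/(N-1)$ and $f_d(N)=N\widehat s^2/(N-1)$. It then remains to prove unimodality with turning point $m_\opt$, i.e. $\delta(m)>\widehat s^2/(N-1)$ for $2\le m\le m_\opt$ and $\delta(m)<\widehat s^2/(N-1)$ for $m>m_\opt$.

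The right branch is clean and uses only the right inequality in \eqref{eq:7.15} together with monotonicity. For $m\ge m_\opt+1$ we have $m-1\ge m_\opt$, hence $\delta(m)\le\varepsilon_d(m-1)^2\le\varepsilon_d(m_\opt)^2<\widehat s^2/(N-1)$, so $\Delta(m)>0$. Thus $f_d$ is strictly increasing on $\{m_\opt,\ldots,N\}$, and in particular $f_d(m_\opt)<f_d(m)$ for every $m>m_\opt$.

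The left branch is the main obstacle. Here I must show $\delta(m)>\widehat s^2/(N-1)$, equivalently $\Delta(m)<0$, for $2\le m\le m_\opt$. The left inequality in \eqref{eq:7.15} gives, via the decrease of $\varepsilon_d^2$, only a bound on the \emph{level} $\varepsilon_d(m-1)^2\ge\varepsilon_d(m_\opt-1)^2>\widehat s^2/(N-1)$, not on the decrement $\delta(m)$; so \eqref{eq:7.15} alone is insufficient and this is exactly where Hypothesis~\ref{hypothesis:7.8} is needed. The big gap $\varepsilon_d(m_\opt-1)\gg\varepsilon_d(m_\opt)$ forces $\delta(m_\opt)=\varepsilon_d(m_\opt-1)^2-\varepsilon_d(m_\opt)^2$ to dominate $\widehat s^2/(N-1)$, since $\varepsilon_d(m_\opt)^2<\widehat s^2/(N-1)\ll\varepsilon_d(m_\opt-1)^2$; more generally, recalling from the proof of Lemma~\ref{lemma:7.7} that $\delta(\alpha)=\Vert[\eta_d]\,\bfvarphi^\alpha\Vert^2/\Vert\eta_d\Vert^2$, the hypothesis encodes that the dominant contributions $\Vert[\eta_d]\,\bfvarphi^\alpha\Vert^2$ occur precisely at indices $\alpha\le m_\opt$, which keeps the decrements above the threshold up to $m_\opt$. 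Granting this, $\Delta(m)<0$ on $\{2,\ldots,m_\opt\}$, so $f_d$ is strictly decreasing there. Combining the two branches, $m_\opt$ is the unique index at which $\Delta$ changes sign, hence the unique local minimum and the global minimum, which is \eqref{eq:7.16}. I expect the single quantitative inequality $f_d(m_\opt)<f_d(m_\opt-1)$ to be the crux, as it is the one place where the qualitative big-gap assumption must be converted into the strict bound $\delta(m_\opt)>\widehat s^2/(N-1)$.
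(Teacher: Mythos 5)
Your proposal follows essentially the same route as the paper's proof: the same endpoint evaluation from $\varepsilon_d(1)=1$, $\varepsilon_d(N)=0$, the same forward-difference decomposition, the increasing branch obtained from monotonicity of $\varepsilon_d$ together with $\varepsilon_d(m_\opt)^2<\widehat s^2/(N-1)$, and the decreasing branch resting on the qualitative gap $\varepsilon_d(m_\opt-1)\gg\varepsilon_d(m_\opt)$ of Hypothesis~\ref{hypothesis:7.8} combined with \eqref{eq:7.15} --- indeed you are more explicit than the paper about the fact that this last step is exactly where the non-quantitative ``$\gg$'' must be converted into $\delta(m)>\widehat s^2/(N-1)$, and your spectral reading $\delta(\alpha)=\Vert[\eta_d]\,\bfvarphi^\alpha\Vert^2/\Vert\eta_d\Vert^2$ is a correct and useful gloss on that hypothesis. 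One cosmetic slip: the two inequalities of \eqref{eq:7.15} are labelled backwards in your text (the bound $\varepsilon_d(m_\opt)^2<\widehat s^2/(N-1)$ used for the increasing branch is the \emph{left} inequality, and $\widehat s^2/(N-1)<\varepsilon_d(m_\opt-1)^2$ used for the decreasing branch is the \emph{right} one), but this does not affect the mathematics.
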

\begin{proof}
The value of $f_d(1)$ and $f_d(N)$ are directly deduced from equation \eqref{eq:7.7b} and also from the values $\varepsilon_d(1)=1$ and $\varepsilon_d(N)=0$ (see Lemma~\ref{lemma:7.7}).
(i) Let $m$ be such that $m_\opt \leq m\leq N-1$, and let
$\Delta_m^+ = f_d(m+1) - f_d(m) = \widehat s^2/(N-1) +\varepsilon_d(m+1)^2 - \varepsilon_d(m)^2$.
Since $\varepsilon_d$ is a decreasing function (see Lemma~\ref{lemma:7.7}), $\varepsilon_d(m)^2\leq \varepsilon_d(m_\opt)^2$, and consequently,
$\Delta_m^+ \geq \widehat s^2/(N-1) +\varepsilon_d(m+1)^2 - \varepsilon_d(m_\opt)^2$.
Since $\varepsilon_d(m+1)^2 > 0$ and since $\widehat s^2/(N-1)-\varepsilon_d(m_\opt)^2 > 0$ (due to equation \eqref{eq:7.15}), we have
$\Delta_m^+ > 0$ and therefore, $f_d$ is an increasing function on $\{m_\opt,\ldots , N\}$.
(ii) Let $m$ be such that $1 \leq m\leq m_\opt$, and let
$\Delta_m^- = f_d(m) - f_d(m-1) = \widehat s^2/(N-1) +\varepsilon_d(m)^2 - \varepsilon_d(m-1)^2$.
From Hypothesis~\ref{hypothesis:7.8}, it can be deduced that $\varepsilon_d(m)^2 - \varepsilon_d(m-1)^2 < 0$.
For all $2 \leq m\leq m_\opt$, we have
$\varepsilon_d(m-1)^2 \geq \varepsilon_d(m_\opt -1)^2 \gg \varepsilon_d(m_\opt)^2$ and equation \eqref{eq:7.15} shows that  $\Delta_m^- <0$.
(iii) Since $\Delta_m^- <0$ for $1\leq m \leq m_\opt$ and $\Delta_m^+ > 0$ for $m_\opt \leq m \leq N$ yield equation \eqref{eq:7.16}.
\end{proof}
\begin{theorem}[\textbf{Existence of a minimum of} $d_N^2(m)$ \textbf{for} $m < N$]\label{theorem:7.13}
Let $\curM_\opt =\{m_\opt,$ $m_\opt+1,\ldots , N\}$ in which $m_\opt$ is defined in Hypothesis~\ref{hypothesis:5.2}.
\begin{subequations}\label{eq:7.17}
\begin{align}
  \hbox{If}   & \quad  \forall \, m \in \curM_\opt\quad , \quad r(m) \leq 1 \, , \label{eq:7.17a} \\
  \hbox{then} & \quad \min_{m\in\curM_\popt} d_N^2(m) \leq \min_{m\in\curM_\popt} d_N^{2,\psup}(m) < d_N^2(N) \, ,\label{eq:7.17b}
\end{align}
\end{subequations}
in which $d_N^{2,\psup}(m)$ is written as
\begin{equation}\label{eq:7.18}
  d_N^{2,\psup}(m) = 1 +\frac{m}{N-1} \, .
\end{equation}
Equations \eqref{eq:7.17a} and \eqref{eq:7.17b} shows that
\begin{equation}\label{eq:7.19}
 \min_{m\in\curM_\popt} d_N^2(m)  \leq 1 +\frac{m_\opt}{N-1} < d_N^2(N) \, ,
\end{equation}
which means that, if hypothesis defined by equation \eqref{eq:7.17a} holds, then the PLoM method is a better method than the usual one corresponding to $d_N^2(N)$.
\end{theorem}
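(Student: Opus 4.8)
The plan is to combine the decomposition $d_N^2(m) = f_d(m) + h_d(m)$ from Proposition~\ref{proposition:7.10} with the rewriting $h_d(m) = r(m)\,\underline g(m)$ from Lemma~\ref{lemma:7.11}, and then to control $h_d(m)$ from above by $\underline g(m)$ using precisely the hypothesis $r(m)\leq 1$. The whole argument reduces to one exact algebraic identity plus a monotonicity observation, so I would first isolate and prove that identity.

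First I would show the key identity $f_d(m) + \underline g(m) = 1 + m/(N-1) = d_N^{2,\psup}(m)$. Substituting $f_d(m) = m\widehat s^2/(N-1) + \varepsilon_d(m)^2$ from equation \eqref{eq:7.7b} and $\underline g(m) = 1 + (\widehat s^2/s^2)(m/N) - \varepsilon_d(m)^2$ from equation \eqref{eq:7.12}, the two $\varepsilon_d(m)^2$ terms cancel and one is left with $1 + m\widehat s^2\bigl(\tfrac{1}{N-1} + \tfrac{1}{s^2 N}\bigr)$. Inserting $\widehat s^2 = s^2/(s^2 + (N-1)/N) = s^2 N/(s^2 N + (N-1))$ from equation \eqref{eq:3.2}, the factor in parentheses is $\bigl(s^2 N + (N-1)\bigr)/\bigl((N-1)s^2 N\bigr)$, which cancels against $\widehat s^2$ to leave exactly $1/(N-1)$, hence $f_d(m)+\underline g(m) = 1 + m/(N-1)$. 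This cancellation is the heart of the matter, and it is where the specific choice of the modified bandwidth $\widehat s$ in \eqref{eq:3.2} is used in an essential way; this is the step I expect to require the most care.

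With the identity in hand, the rest is routine. Under hypothesis \eqref{eq:7.17a}, for every $m \in \curM_\opt$ I have $r(m)\leq 1$, and since $\underline g(m) > 0$ by equation \eqref{eq:7.12}, it follows that $h_d(m) = r(m)\,\underline g(m) \leq \underline g(m)$. Combining this with the decomposition \eqref{eq:7.7a} gives, for all $m\in\curM_\opt$, the pointwise bound $d_N^2(m) = f_d(m) + h_d(m) \leq f_d(m) + \underline g(m) = d_N^{2,\psup}(m)$, which is equation \eqref{eq:7.18}. Taking the minimum over $\curM_\opt$ on both sides yields $\min_{m\in\curM_\popt} d_N^2(m) \leq \min_{m\in\curM_\popt} d_N^{2,\psup}(m)$.

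Finally I would evaluate the right-hand minimum and close the strict inequality. Because $m \mapsto 1 + m/(N-1)$ is strictly increasing, its minimum over $\curM_\opt = \{m_\opt,\ldots,N\}$ is attained at the left endpoint $m = m_\opt$ and equals $1 + m_\opt/(N-1)$. Invoking $d_N^2(N) = 1 + N/(N-1)$ from Lemma~\ref{lemma:7.2} (equivalently Corollary~\ref{corollary:7.5}) together with $m_\opt < N$ (guaranteed by Hypothesis~\ref{hypothesis:5.2}, since $\lambda_{m_\popt+1}(\varepsilon_\opt)$ exists) gives $1 + m_\opt/(N-1) < 1 + N/(N-1) = d_N^2(N)$. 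Chaining these relations establishes both \eqref{eq:7.17b} and \eqref{eq:7.19}, completing the proof.
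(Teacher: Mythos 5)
Your proposal is correct and follows essentially the same route as the paper: the decomposition $d_N^2(m) = f_d(m) + r(m)\,\underline g(m)$, the bound $h_d(m) \leq \underline g(m)$ from the hypothesis $r(m)\leq 1$ together with $\underline g(m)>0$, the identity $f_d(m)+\underline g(m) = 1 + m/(N-1)$ via equation \eqref{eq:3.2}, and the evaluation of the minimum at $m_\opt < N$ against $d_N^2(N) = 1 + N/(N-1)$ from Lemma~\ref{lemma:7.2}. The only difference is presentational: you work out explicitly the cancellation involving $\widehat s^2$ that the paper leaves as ``can be rewritten, using equation \eqref{eq:3.2}'', which is a useful verification but not a new argument.
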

\begin{proof}
Equations \eqref{eq:7.7a} and \eqref{eq:7.14} yield $d_N^2(m) = f_d(m)+r(m)\, \underline g(m)$.
If $r(m) \leq 1$ for all $m$ in $\curM_\opt$, then
\begin{equation}\label{eq:7.20}
 d_N^2(m)  \leq d_N^{2,\psup}(m) \quad , \quad \forall\, m \in \curM_\opt \, ,
\end{equation}
in which $d_N^{2,\psup}(m) = f_d(m) + \underline g(m)$. From equations \eqref{eq:7.7b} and \eqref{eq:7.12}, it can be seen that
$d_N^{2,\psup}(m) = 1 +m\,\widehat s^2/(N-1) + (\widehat s^2 /s^2)(m/N)$ that can be rewritten, using equation \eqref{eq:3.2}, as equation \eqref{eq:7.18}.
For $m=N$, equation \eqref{eq:7.18} and Lemma~\ref{lemma:7.2} yield
$d_N^{2,\psup}(N) = d_N^2(N) = 1 + N/(N-1)$. From equations \eqref{eq:7.18} and \eqref{eq:7.20}, it can then be deduced that
$\min_{m\in\curM_\popt} d_N^2(m) \leq \min_{m\in\curM_\popt} d_N^{2,\psup}(m) =1 + m_\opt/(N-1)$, and since $d_N^2(N) = 1 + N/(N-1)$, we obtain equation \eqref{eq:7.19}.
\end{proof}
\begin{remark}[\textbf{Concerning the hypothesis} $r(m)\leq 1, \forall m\in \curM_\opt$]\label{remark:7.14}
For $m\in\{1, \ldots,$ $N\}$, $r(m)$ defined by equation \eqref{eq:7.11} does not seem to be calculable either explicitly or numerically (since there are $N^N$ elements in set $\curJ$). This is the reason why we have introduced the hypothesis defined by equation \eqref{eq:7.17a} in order to formulate the theorem. Obviously, this hypothesis has numerically been verified by a direct Monte Carlo simulation of $d_N^2(m)$ given in Definition~\ref{definition:7.1} using \eqref{sec:6.3} and Remark~\ref{remark:6.8}. In \eqref{sec:8}, we give additional developments and comments about this hypothesis.
\end{remark}
\section{Justification of the  hypothesis introduced in Theorem~\ref{theorem:7.13}}
\label{sec:8}
As explained in Remark~\ref{remark:7.14}, $r(m)$ defined by equation \eqref{eq:7.11}  cannot explicitly be calculated for $1\leq m\leq N-1$ (for $m=N$, we have $r(N) = 1$).
In this section, we give a preliminary remark showing the difficulty. We then propose  an estimation of $r(m)$ using the maximum entropy principle from Information Theory, and finally, we propose a rough approximation of $r(m)$. In \eqref{sec:9}, devoted to a numerical illustration, we will compare the two last estimations of $d_N^2(m)$ with the "true" function $d_N^2(m)$ estimated as explained in Remark~\ref{remark:7.14}.
\begin{remark}[\textbf{Preliminary remark}]\label{remark:8.1}
For all $\bfj$ in $\curJ$ and for all $m$ in $\{1,\ldots , N\}$, let $a_\bfj(m) \geq 0$ be defined in Proposition~\ref{proposition:6.7}-(ii).
Therefore, $\gamma_\bfj(m)$, which is defined by equation \eqref{eq:6.8b}, can be rewritten as
$\gamma_\bfj(m) = \exp\{-\frac{1}{2 s^2} \, a_\bfj(m)\}$ and consequently, $p_\bfj (m)$ defined by equation \eqref{eq:6.8a}, can also be rewritten as
\begin{equation}\label{eq:8.1}
  p_\bfj(m) = \frac{ \exp\{-\frac{1}{2s^2}  \, a_\bfj(m)\} }
                   { \sum_{\bfj'\in\curJ} \exp\{-\frac{1}{2s^2}  \, a_{\bfj'}(m)\}} \quad , \quad \sum_{\bfj\in\curJ} p_\bfj(m) = 1 \, .
\end{equation}
The discrete random variable $A(m)$ with values in $\{ a_\bfj(m),\bfj\in\curJ\}$, whose probability distribution $\{p_\bfj(m),\bfj\in\curJ\}$ is defined by equation \eqref{eq:8.1}, is a Maxwell-Boltzmann distribution. Its mean value is $\underline a(m) = E\{A(m)\} = \sum_{\bfj\in\curJ} a_\bfj(m)\, p_\bfj(m)$, and its entropy is written as $S( \{ p_\bfj(m) \}_\bfj ) = -\sum_{\bfj\in\curJ} p_\bfj(m)\, \log p_\bfj(m) = \frac{1}{2 s^2}\, \underline a(m) + \log (\sum_{\bfj\in\curJ} \exp\{-\frac{1}{2s^2}  \, a_{\bfj}(m)\})$.
From equation \eqref{eq:7.11}, we then have to calculate, for all $m\in\{1,\ldots, N-1\}$,
$r(m) = \sum_{\bfj\in\curJ} (g_\bfj(m) / \underline g(m))\, p_\bfj(m)$, in which $g_\bfj(m)$ is defined by equation \eqref{eq:7.10}.
As we have explained, such a calculation cannot be performed neither explicitly nor numerically (there are $N^N$ elements in $\curJ$).
\end{remark}

In the following, we construct an estimation of $r(m)$ using the maximum entropy principle.

\begin{definition}[\textbf{Discrete random matrix} ${\relax{ [\bfA]}}$]\label{definition:8.2}
Let $[\bfA]$ be the discrete random variable with values  in $\{[\eta_d(\bfj)] ,\bfj\in\curJ\}$ with $[\eta_d(\bfj)]\in \MM_{\nu,N}$ defined by equation \eqref{eq:4.3}, and for which the probability distribution is $\{\widehat p_\bfj ,\bfj\in\curJ\}$ with $\widehat p_\bfj = 1/N^N$,
\begin{equation}\label{eq:8.2}
  P_{[\bfA]}(d[a]) = \sum_{\bfj\in\curJ} \widehat p_\bfj\,  \delta_{0_{\MM_{\nu,N}}}([a] - [\eta_d(\bfj)]) \, .
\end{equation}
\end{definition}
\begin{lemma}[\textbf{Second-order moments of random matrix} ${\relax{ [\bfA] }}$]\label{lemma:8.3}
\begin{equation}\label{eq:8.3}
 E\{[\bfA]\}  =[0_{\nu,N}] \quad , \quad E\{[\bfA]^T\,[\bfA]\} = \frac{1}{N} \, \Vert\eta_d\Vert^2\, [I_N]\, .
\end{equation}
\end{lemma}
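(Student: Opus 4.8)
The plan is to compute both moments directly from the definition of the expectation of the discrete random matrix $[\bfA]$ given by equation \eqref{eq:8.2}, and then to recognize each resulting sum over $\curJ$ as one of the averages already evaluated in Lemma~\ref{lemma:7.4}. Since those combinatorial identities have been established, no new summation over the $N^N$ multi-indices needs to be carried out.

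First I would treat the mean. Because $[\bfA]$ takes the value $[\eta_d(\bfj)]$ with probability $\widehat p_\bfj = 1/N^N$ for each $\bfj\in\curJ$, the expectation is
\[
  E\{[\bfA]\} = \sum_{\bfj\in\curJ} \widehat p_\bfj\, [\eta_d(\bfj)] = \frac{1}{N^N}\sum_{\bfj\in\curJ}[\eta_d(\bfj)] ,
\]
which is exactly the left-hand side of equation \eqref{eq:7.3a}. Hence $E\{[\bfA]\} = [0_{\nu,N}]$, giving the first identity of \eqref{eq:8.3}.

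Next I would compute the second-order moment by the same device, using linearity of the expectation together with the fact that $[\eta_d(\bfj)]^T[\eta_d(\bfj)] = [M_d(\bfj)]$ by equation \eqref{eq:6.9}. This yields
\[
  E\{[\bfA]^T\,[\bfA]\} = \sum_{\bfj\in\curJ} \widehat p_\bfj\, [\eta_d(\bfj)]^T[\eta_d(\bfj)] = \frac{1}{N^N}\sum_{\bfj\in\curJ}[M_d(\bfj)] ,
\]
and the right-hand side is precisely the quantity evaluated in equation \eqref{eq:7.3b}, namely $\tfrac{1}{N}\Vert\eta_d\Vert^2\,[I_N]$. This establishes the second identity of \eqref{eq:8.3}.

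There is essentially no obstacle here: both statements reduce to identities already proven in Lemma~\ref{lemma:7.4}, so the only point requiring attention is the bookkeeping step of recognizing, via equation \eqref{eq:6.9}, that $E\{[\bfA]^T[\bfA]\}$ is the uniform average of $[M_d(\bfj)]$ over $\curJ$. Once that identification is made, the sum-over-$\curJ$ computations need not be redone and the result is immediate.
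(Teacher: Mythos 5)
Your proof is correct and follows essentially the same route as the paper's: both compute $E\{[\bfA]\}$ and $E\{[\bfA]^T[\bfA]\}$ directly from the discrete measure in equation \eqref{eq:8.2} (the paper writes this as integrals against the Dirac-sum measure, you use the equivalent discrete-distribution form), identify $[\eta_d(\bfj)]^T[\eta_d(\bfj)]$ with $[M_d(\bfj)]$ via equation \eqref{eq:6.9}, and conclude by the identities \eqref{eq:7.3a} and \eqref{eq:7.3b} of Lemma~\ref{lemma:7.4}. No gap; the argument is complete.
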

\begin{proof}
We have
$E\{[\bfA]\} = \int_{\MM_{\nu,N}} [a] \sum_{\bfj\in\curJ}\widehat p_\bfj \,\delta_{0_{\MM_{\nu,N}}}([a] - [\eta_d(\bfj)])$ and
$E\{[\bfA]^T[\bfA]\}$ $= \int_{\MM_{\nu,N}} [a]^T\, [a]\sum_{\bfj\in\curJ}\widehat p_\bfj \,  \delta_{0_{\MM_{\nu,N}}}([a] - [\eta_d(\bfj)])$ yielding
$E\{[\bfA]\} = N^{-N}\sum_{\bfj\in\curJ} [\eta_d(\bfj)]$ and $E\{[\bfA]^T\,[\bfA]\} = N^{-N}\sum_{\bfj\in\curJ} [\eta_d(\bfj)]^T \, [\eta_d(\bfj)]$.
Using equations \eqref{eq:7.3a} and \eqref{eq:6.9} with equation \eqref{eq:7.3b} yield equation \eqref{eq:8.3}.
\end{proof}
\begin{lemma}[\textbf{Expression of} $r(m)$ \textbf{as a function of random matrix} ${\relax{[\bfA]}}$]\label{lemma:8.4}
Let $m$ be fixed in $\{1,\ldots , N\}$. Function $h_d(m) = r(m)\, \underline g(m)$  defined by equation \eqref{eq:7.14} in which $r(m)$ is defined by equation \eqref{eq:7.11} can be rewritten as
\begin{subequations}\label{eq:8.4}
  \begin{align}
    r(m)\, \underline g(m) & = 1 \! - \!\varepsilon_d(m)^2 \! + \frac{1}{\underline \gamma(m)\, \Vert\eta_d\Vert^2}
                              <\! \frac{\widehat s^2}{s^2}[T_2(m)] - 2 \frac{\widehat s}{s}[T_1(m)]\, [\eta_d]\, , [G_m]\! >_F \, , \label{eq:8.4a}\\
    \underline \gamma(m)   & = E\{ \exp(-\frac{1}{2 s^2} <\![I_N] - [G_m] \, , [\bfA]^T \, [\bfA]\! >_F )\} \, , \label{eq:8.4b}\\
    [T_1(m)]               & = E\{ [\bfA]^T \exp(-\frac{1}{2 s^2} <\![I_N] - [G_m] \, , [\bfA]^T \, [\bfA]\! > )\}\, , \label{eq:8.4c}\\
    [T_2(m)]               & = E\{ [\bfA]^T [\bfA] \exp(-\frac{1}{2 s^2} <\![I_N] - [G_m] \, , [\bfA]^T \, [\bfA]\! >_F )\}\, . \label{eq:8.4d}
  \end{align}
\end{subequations}
\end{lemma}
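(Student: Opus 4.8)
The plan is to start from $h_d(m)=r(m)\,\underline g(m)$ (equation~\eqref{eq:7.14}) together with the explicit form~\eqref{eq:7.7c} of $h_d$, and to recast the weighted sum over $\curJ$ as an expectation with respect to the random matrix $[\bfA]$ of Definition~\ref{definition:8.2}. First I would expand the Frobenius norm occurring in~\eqref{eq:7.7c},
\begin{equation*}
 \Vert [\eta_d^m] - \frac{\widehat s}{s}\, [\eta_d^m(\bfj)]\Vert^2
 = \Vert\eta_d^m\Vert^2 - 2\,\frac{\widehat s}{s} <\! [\eta_d^m]\, , [\eta_d^m(\bfj)]\! >_F
   + \frac{\widehat s^2}{s^2}\,\Vert\eta_d^m(\bfj)\Vert^2 \, ,
\end{equation*}
and simplify the three terms using $[\eta_d^m]=[\eta_d]\,[G_m]$ and $[\eta_d^m(\bfj)]=[\eta_d(\bfj)]\,[G_m]$ (equations~\eqref{eq:7.4} and~\eqref{eq:7.9}) together with $[G_m]^2=[G_m]=[G_m]^T$ (Lemma~\ref{lemma:5.4}-(iii)). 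By the cyclicity of the trace this gives $<\![\eta_d^m]\, ,[\eta_d^m(\bfj)]\!>_F = <\![\eta_d(\bfj)]^T[\eta_d]\, ,[G_m]\!>_F$ and $\Vert\eta_d^m(\bfj)\Vert^2 = <\![M_d(\bfj)]\, ,[G_m]\!>_F$, while~\eqref{eq:7.6} turns the first term into $(1-\varepsilon_d(m)^2)\Vert\eta_d\Vert^2$.

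Next, because $\sum_{\bfj\in\curJ}p_\bfj(m)=1$, the contribution of $\Vert\eta_d^m\Vert^2$ yields precisely the $1-\varepsilon_d(m)^2$ summand of~\eqref{eq:8.4a}. For the two remaining terms I would use $p_\bfj(m)=\gamma_\bfj(m)/(N^N\,\underline\gamma(m))$, which follows from~\eqref{eq:6.8a} and the definition of $\underline\gamma(m)$ in~\eqref{eq:7.10}. Since $[\bfA]$ is uniform on $\{[\eta_d(\bfj)],\bfj\in\curJ\}$ with weights $1/N^N$, every weighted sum $\sum_{\bfj\in\curJ}p_\bfj(m)\,X_\bfj$ becomes $\underline\gamma(m)^{-1}\,E\{\Gamma\,X([\bfA])\}$, where $\Gamma=\exp\{-\frac{1}{2s^2}<\![I_N]-[G_m]\, ,[\bfA]^T[\bfA]\!>_F\}$ is the random version of $\gamma_\bfj(m)$ read off from~\eqref{eq:6.8b} and~\eqref{eq:6.9}. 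Taking $X_\bfj\equiv 1$ identifies $\underline\gamma(m)$ with~\eqref{eq:8.4b}.

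Finally, since $[\eta_d]$ and $[G_m]$ are deterministic, I would pull the expectation through the linear map $[\,\cdot\,]\mapsto <\!\,\cdot\,,[G_m]\!>_F$. The cross term produces $\underline\gamma(m)^{-1}<\![T_1(m)]\,[\eta_d]\, ,[G_m]\!>_F$ with $[T_1(m)]=E\{[\bfA]^T\,\Gamma\}$ as in~\eqref{eq:8.4c}, and the quadratic term produces $\underline\gamma(m)^{-1}<\![T_2(m)]\, ,[G_m]\!>_F$ with $[T_2(m)]=E\{[\bfA]^T[\bfA]\,\Gamma\}$ as in~\eqref{eq:8.4d}; here one uses $E\{\Gamma\,[\bfA]\}=[T_1(m)]^T$ and the symmetry of $[\bfA]^T[\bfA]$ and $[T_2(m)]$. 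Collecting the three contributions and dividing by $\Vert\eta_d\Vert^2$ gives~\eqref{eq:8.4a}. The computation is essentially bookkeeping; the only delicate points are the consistent use of cyclicity and of $[G_m]^2=[G_m]=[G_m]^T$ to push every occurrence of $[G_m]$ into the single slot $<\!\,\cdot\,,[G_m]\!>_F$, and the normalization $p_\bfj(m)=\gamma_\bfj(m)/(N^N\underline\gamma(m))$ that turns the weighted average over $\curJ$ into the ratio of expectations appearing in~\eqref{eq:8.4a}.
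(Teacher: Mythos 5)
Your proposal is correct and follows essentially the same route as the paper's proof: expand the squared Frobenius norm in \eqref{eq:7.7c} (the paper gets this expansion by citing the proof of Proposition~\ref{proposition:7.10} rather than redoing it), convert the first term via \eqref{eq:7.6}, use $p_\bfj(m)=\gamma_\bfj(m)/(N^N\underline\gamma(m))$, and recognize the resulting $\widehat p_\bfj$-weighted sums as expectations with respect to $P_{[\bfA]}$ from \eqref{eq:8.2}. No gaps.
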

\begin{proof}
For all $\bfj$ in $\curJ$, $g_\bfj(m)$ (defined by equation \eqref{eq:7.10}) can be rewritten, using the proof of Proposition~\ref{proposition:7.10} and equation \eqref{eq:7.2b}, as
follows, $g_\bfj(m) = \Vert\eta_d\Vert^{-2} \{ \Vert\eta_d^m\Vert^2  +  $ $<\! (\widehat s^2 / s^2) [M_d(\bfj)] - 2(\widehat s /s) [\eta_d(\bfj)]^T [\eta_d] \, , [G_m]\!>_F\}$, or using equation \eqref{eq:7.6},
$g_\bfj(m) = 1 - \varepsilon_d(m)^2 + \Vert\eta_d\Vert^{-2}\{ <\! (\widehat s^2 / s^2) [\eta_d(\bfj)]^T [\eta_d(\bfj)]- 2(\widehat s /s) [\eta_d(\bfj)]^T [\eta_d] \, , [G_m]\!>_F\}$. Substituting this expression of $g_\bfj(m)$ into equation \eqref{eq:7.11} and using equation \eqref{eq:7.10} yield equation \eqref{eq:8.4a} in which
$\underline \gamma(m)  = \sum_{\bfj\in\curJ} \widehat p_\bfj \, \exp(-\frac{1}{2 s^2} <\![I_N] - [G_m] \, , [\eta_d(\bfj)]^T$ $[\eta_d(\bfj)]\! >_F )\}$, where
$[T_1(m)]  = \sum_{\bfj\in\curJ} \widehat p_\bfj \, [\eta_d(\bfj)]^T \exp(-\frac{1}{2 s^2} <\![I_N] - [G_m] \, , [\eta_d(\bfj)]^T$ $[\eta_d(\bfj)]\! >_F )\}$, and where
$[T_2(m)]  = \sum_{\bfj\in\curJ} \widehat p_\bfj \, [\eta_d(\bfj)]^T [\eta_d(\bfj)] \exp(-\frac{1}{2 s^2} <\![I_N] - [G_m] \, , [\eta_d(\bfj)]^T [\eta_d(\bfj)]\! >_F )\}$.
Using equation \eqref{eq:8.2} allows  $\underline \gamma(m)$, $[T_1(m)]$, and $[T_2(m)]$ to rewritten as equations \eqref{eq:8.4b}, \eqref{eq:8.4c}, and \eqref{eq:8.4d}.
\end{proof}
Below, an approximation $[\bfA^c]$ of random matrix $[\bfA]$ is constructed using the maximum entropy principle \cite{Shannon1948,Jaynes1957a,Jaynes1957b,Cover2006,Soize2017b} under the available information defined by equation \eqref{eq:8.3}.
\begin{definition}[\textbf{Random matrix} ${\relax{[\bfA^c]}}$]\label{definition:8.5}
Let $[\bfA^c]$ be the random matrix with values in $\MM_{\nu,N}$ whose probability measure $P_{[\bfA^c]}(d[a])$ is defined by a pdf $[a]\mapsto p_{[\bfA^c]}([a])$ on $\MM_{\nu,N}$ with respect to $d[a]$. This pdf is constructed as the unique solution of the following maximum entropy (MaxEnt) problem,
\begin{equation}\label{eq:8.5}
 p_{[\bfA^c]} = \max_{p\in\curC_\pad} S(p)\, ,
\end{equation}
in which the entropy is written as $S(p)= -\int_{\MM_{\nu,N}} p([a])\, \log(p([a])) \, d[a]$ and where the admissible set is defined by
$\curC_\ad =\{ [a]\mapsto p([a]):\MM_{\nu,N}\rightarrow \RR^+$, $\int_{\MM_{\nu,N}} p([a])\, d[a]= 1$,
$ \int_{\MM_{\nu,N}} [a]\, p([a])\, d[a] = [0_{\nu,N}]$, $\int_{\MM_{\nu,N}} [a]^T\, [a]\, p([a])\, d[a] = (1/N) \Vert\eta_d\Vert^2\, [I_N] \}$.
\end{definition}
\begin{proposition}[\textbf{Explicit expression of pdf} ${\relax{p_{[\bfA^c]}}}$ ]\label{proposition:8.6}
The optimization problem defined by equation \eqref{eq:8.5} has a unique solution written, for all $[a]$ in $\MM_{\nu,N}$, as
\begin{equation}\label{eq:8.6}
  p_{[\bfA^c]}([a]) = \frac{1}{(2\pi)^{\nu N/2}} \frac{1}{\sigma^{\nu N}} \exp\{-\frac{1}{2\sigma^2}\Vert a\Vert^2\}
  \,\, , \,\, \sigma^2 = \frac{1}{\nu N} \Vert\eta_d\Vert^2 = 1- \frac{1}{N}\, .
\end{equation}
\end{proposition}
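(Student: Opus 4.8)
The plan is to bypass an explicit resolution of the Lagrange multipliers and instead to exhibit the candidate density directly, then to establish its optimality and uniqueness in one stroke via the classical Gibbs (relative-entropy) inequality. First I would guess, from the isotropic structure of the available information $E\{[\bfA]\}=[0_{\nu,N}]$ and $E\{[\bfA]^T\,[\bfA]\}=(1/N)\Vert\eta_d\Vert^2\,[I_N]$ recorded in Lemma~\ref{lemma:8.3}, that the MaxEnt density on $\MM_{\nu,N}\simeq\RR^{\nu N}$ is the centered isotropic Gaussian with a common variance $\sigma^2$ on each of the $\nu N$ entries, i.e.\ exactly the density $p_{[\bfA^c]}$ displayed in equation~\eqref{eq:8.6}. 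The only free parameter is $\sigma^2$, which I would fix from the second-moment constraint.

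Second, I would verify that $p_{[\bfA^c]}$ belongs to $\curC_\ad$. Normalization and the vanishing of the first moment hold trivially by symmetry of the Gaussian. For the second moment, under $p_{[\bfA^c]}$ the entries $[a]_{k\ell}$ are independent and $N(0,\sigma^2)$, so the diagonal entries of $E\{[a]^T\,[a]\}$ equal $\sum_{k=1}^\nu E\{[a]_{k\ell}^2\}=\nu\sigma^2$ while the off-diagonal entries vanish; hence $E\{[a]^T\,[a]\}=\nu\sigma^2\,[I_N]$. Matching this with $(1/N)\Vert\eta_d\Vert^2\,[I_N]$ gives $\sigma^2=(1/(\nu N))\Vert\eta_d\Vert^2$, and since equation~\eqref{eq:2.4} gives $\Vert\eta_d\Vert^2=\nu(N-1)$, I obtain $\sigma^2=(N-1)/N=1-1/N$, precisely the value claimed.

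Third, for optimality and uniqueness I would invoke the Gibbs inequality. The crucial observation is that $\log p_{[\bfA^c]}([a])$ is, up to an additive constant, a multiple of $\Vert a\Vert^2=\tr\{[a]^T\,[a]\}=<\![I_N]\, ,[a]^T\,[a]\!>_F$; that is, $\log p_{[\bfA^c]}$ is affine in the moment functions $1$ and $[a]^T\,[a]$, whose expectations are pinned down on $\curC_\ad$. Consequently, for every $p\in\curC_\ad$ the quantity $-\int_{\MM_{\nu,N}} p([a])\,\log p_{[\bfA^c]}([a])\,d[a]$ takes one and the same value, equal to $S(p_{[\bfA^c]})$, because $\int_{\MM_{\nu,N}} p\,d[a]=1$ and $\tr\{\int_{\MM_{\nu,N}} p\,[a]^T\,[a]\,d[a]\}=\Vert\eta_d\Vert^2$ are fixed on the admissible set. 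Since the relative entropy satisfies $\int_{\MM_{\nu,N}} p\,\log(p/p_{[\bfA^c]})\,d[a]\ge 0$, with equality if and only if $p=p_{[\bfA^c]}$ almost everywhere, I would conclude $S(p)\le -\int_{\MM_{\nu,N}} p\,\log p_{[\bfA^c]}\,d[a]=S(p_{[\bfA^c]})$ for all admissible $p$, with equality only for $p=p_{[\bfA^c]}$. This simultaneously yields that $p_{[\bfA^c]}$ solves~\eqref{eq:8.5} and that the solution is unique.

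I expect the only genuinely delicate conceptual point to be the disappearance of the linear Lagrange multiplier and the isotropy of the quadratic one—that is, showing the optimizer is the \emph{isotropic} Gaussian rather than a general centered Gaussian. The relative-entropy argument above sidesteps this entirely: by producing an admissible density of the correct exponential form and showing it dominates the entropy of every competitor, one never has to solve the stationarity equations, so isotropy is forced automatically by the isotropic form of the second-moment constraint in~\eqref{eq:8.3}.
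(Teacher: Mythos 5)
Your proof is correct. Note that the paper itself does not prove this proposition at all: its ``proof'' reads ``left to the reader'' with a citation to a reference in which the MaxEnt solution is obtained by the standard Lagrange-multiplier/calculus-of-variations route (introduce multipliers for the normalization, the first-moment, and the matrix-valued second-moment constraints, solve the stationarity equation to get an exponential-quadratic form, then identify the multipliers from the constraints). Your argument replaces that machinery with the Gibbs (relative-entropy) inequality: you exhibit the candidate, check admissibility --- where the computation $E\{[a]^T[a]\}=\nu\sigma^2[I_N]$ together with $\Vert\eta_d\Vert^2=\nu(N-1)$ from equation~\eqref{eq:2.4} correctly pins down $\sigma^2=1-1/N$ --- and then observe that, because $\log p_{[\bfA^c]}$ is affine in the constrained statistics $1$ and $\tr\{[a]^T[a]\}$, the cross-entropy $-\int p\,\log p_{[\bfA^c]}\,d[a]$ is constant on $\curC_\ad$ and equals $S(p_{[\bfA^c]})$, so that $S(p)\le S(p_{[\bfA^c]})$ with equality iff $p=p_{[\bfA^c]}$ a.e. This buys you optimality and uniqueness in a single stroke, with no stationarity equations to solve, and your closing observation is a genuine bonus the multiplier route obscures: since $\log p_{[\bfA^c]}$ contains no linear term, the mean-zero constraint is never used in the inequality, so the isotropic Gaussian maximizes entropy even over the larger set without that constraint and happens to satisfy it. The only caveats are routine measure-theoretic ones (competitors $p$ with $S(p)=-\infty$ or infinite relative entropy are handled by the usual conventions), which match the paper's level of rigor.
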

\begin{proof}
The proof is  left to the reader (see for instance, \cite{Soize2017b}).
\end{proof}
\begin{remark}[\textbf{Independence of the entries of random matrix} ${\relax{[\bfA^c]}}$]\label{remark:8.7}
Equation \eqref{eq:8.6} shows that the real-valued random variables $\{[\bfA^c]_{k\ell}\, ; k=1,\ldots , \nu\, ; \ell=1,\ldots , N\}$
are independent and, each one $\bfA^c_{k\ell} =[\bfA^c]_{k\ell}$, is a second-order, centered, Gaussian random variable for which its variance is $\sigma^2$,
\begin{equation}\label{eq:8.7}
 p_{\bfA^c_{k\ell}} (a_{k\ell} )= \frac{1}{\sqrt{2\pi}\, \sigma} \exp\{-\frac{1}{2\sigma^2} a_{k\ell}^2\}\quad , \quad
 E\{\bfA^c_{k\ell}\} = 0 \quad , \quad  E\{(\bfA^c_{k\ell})^2\} = \sigma^2\, .
\end{equation}
A simple calculation shows that $[\bfA^c]$ effectively satisfies the constraints defined by the available information, that is,
$E\{[\bfA^c]\} \! =[0_{\nu,N}]$ and $E\{[\bfA^c]^T[\bfA^c]\} = (1/N) \, \Vert\eta_d\Vert^2\, [I_N]$.
\end{remark}
\begin{remark}[\textbf{Comparison of the entropy of measures} ${\relax{P_{[\bfA]}}}$ \textbf{and} ${\relax{P_{[\bfA^c]}}}$]\label{remark:8.8}
Let us compare the entropy of $P_{[\bfA]}(d[a])$  defined by equation \eqref{eq:8.2} with the entropy of $P_{[\bfA^c]}(d[a])$ $= p_{[\bfA^c]}([a])\, d[a]$ whose pdf is defined by equation \eqref{eq:8.6}. Since $\widehat p_\bfj = 1/N^N$, we have
$(P_{[\bfA]}) = -\sum_{\bfj\in\curJ} \widehat p_\bfj \, \log \widehat p_\bfj = N\log N$. On the other hand, we have $ S( P_{[\bfA^c]} ) = -\int_{\MM_{\nu,N}} p_{[\bfA^c]} ([a])\, \log p_{[\bfA^c]}([a])$ $d[a] = (\nu N/2) ( \log (2 \pi e) + \log(1-1/N) )$.
Consequently, $ S( P_{[\bfA^c]} ) / S(P_{[\bfA]}) = \nu (2\log N)^{-1} (\log(2\pi e) + \log(1-1/N))$.
Clearly, the approximation will be optimal if $S( P_{[\bfA^c]} ) \sim  S(P_{[\bfA]})$, which, for $\nu\geq 2$, is reached if $N\sim (2\pi e)^{\nu/2}$.  In general, $N < (2\pi e)^{\nu/2}$ and consequently, the level of uncertainties associated with probability measure $P_{[\bfA^c]}(d[a])$ is larger than the one for the probability measure $P_{[\bfA]}$. For instance, in \eqref{sec:9} devoted to the numerical illustration, we have $\nu=9$ and $N=200$, which  yields
$ S( P_{[\bfA^c]} ) / S(P_{[\bfA]}) = 2.4$.
\end{remark}
\begin{definition}[\textbf{Approximation} $r^c(m)$ \textbf{of} $r(m)$]\label{definition:8.9}
For all $m$ in $\{1,\ldots , m\}$, the approximations $r^c(m)$, $\underline \gamma^c(m)$, $[T_1^c(m)]$, and $[T_2^c(m)]$ of $r(m)$, $\underline \gamma(m)$, $[T_1(m)]$, and $[T_2(m)]$  is obtained by replacing  $[\bfA]$ by $[\bfA^c]$ in equations \eqref{eq:8.4a} to \eqref{eq:8.4d},
\begin{subequations}\label{eq:8.8}
  \begin{align}
    r^c(m) \underline g(m) \! & = \!1 \! - \!\varepsilon_d(m)^2 \! + \!\frac{1}{\underline \gamma^c(m) \Vert\eta_d\Vert^2}
                             \! <\! \frac{\widehat s^2}{s^2}[T_2^c(m)]\! - \! 2 \frac{\widehat s}{s}[T_1^c(m)]\, [\eta_d]\, , [G_m]\! >_F  , \label{eq:8.8a}\\
    \underline \gamma^c(m)   & = E\{ \exp(-\frac{1}{2 s^2} <\![I_N] - [G_m] \, , [\bfA^c]^T \, [\bfA^c]\! >_F )\} \, , \label{eq:8.8b}\\
    [T_1^c(m)]               & = E\{ [\bfA^c]^T \exp(-\frac{1}{2 s^2} <\![I_N] - [G_m] \, , [\bfA^c]^T \, [\bfA^c]\! >_F )\}\, , \label{eq:8.8c}\\
    [T_2^c(m)]               & = E\{ [\bfA^c]^T [\bfA^c] \exp(-\frac{1}{2 s^2} <\![I_N] - [G_m] \, , [\bfA^c]^T \, [\bfA^c]\! >_F )\}\, . \label{eq:8.8d}
  \end{align}
\end{subequations}
\end{definition}
\begin{lemma}[\textbf{Explicit calculation of} ${\relax{\underline \gamma^c(m), [T_1^c(m)]}}$, \textbf{and} ${\relax{[T_2^c(m)]}}$]\label{lemma:8.10}
For all integer $m$ in $\{1,\ldots , N\}$, we have
\begin{equation}\label{eq:8.9}
 \underline \gamma^c(m)  = (1+\frac{\sigma^2}{s^2})^{-\nu(N-m)/2}\,\, , \,\, [T_1^c(m)]  =[0_{N,\nu}] \,\, , \,\, [T_2^c(m)]
                                           = \underline \gamma^c(m) \, \nu \,[b_m]^{-1} \,
\end{equation}
in which $\sigma^2$ is defined by the second equation \eqref{eq:8.6} and where the matrix $[b_m]$ is defined by $[b_m] = s^{-2}([I_N]-[G_m]) +\sigma^{-2} [I_N]$ and belongs to $\MM_N^+$ (and thus is invertible).
\end{lemma}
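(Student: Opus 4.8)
The plan is to exploit the i.i.d.\ structure of the entries of $[\bfA^c]$ granted by Remark~\ref{remark:8.7}, so that every expectation in Definition~\ref{definition:8.9} factorizes over the $\nu$ rows of $[\bfA^c]$. Writing $\bfa_k\in\RR^N$ for the $k$-th row of $[\bfA^c]$ (so that $[\bfA^c]_{k\ell}=(\bfa_k)_\ell$), the vectors $\bfa_1,\ldots,\bfa_\nu$ are independent and each is a centered Gaussian vector in $\RR^N$ with covariance $\sigma^2[I_N]$. Since $[I_N]-[G_m]$ is symmetric, $<\![I_N]-[G_m]\, , [\bfA^c]^T[\bfA^c]\!>_F=\tr\{([I_N]-[G_m])\,[\bfA^c]^T[\bfA^c]\}=\sum_{k=1}^\nu <\!([I_N]-[G_m])\,\bfa_k\, ,\bfa_k\!>$, so the weight $\exp(-\frac{1}{2s^2}<\![I_N]-[G_m]\, , [\bfA^c]^T[\bfA^c]\!>_F)$ splits as the product over $k$ of the one-row weights $w(\bfa_k):=\exp(-\frac{1}{2s^2}<\!([I_N]-[G_m])\,\bfa_k\, ,\bfa_k\!>)$. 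First I would record this reduction, which reduces all three claims to three elementary Gaussian integrals for a single row $\bfa$.

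Next I would evaluate those integrals. Completing the square, the product of the weight $w(\bfa)$ with the density of $\bfa$ is, up to a constant, the density of a centered Gaussian whose precision matrix is exactly $s^{-2}([I_N]-[G_m])+\sigma^{-2}[I_N]=[b_m]$; this is where $[b_m]$ arises naturally, and since $[I_N]-[G_m]\in\MM_N^{+0}$ (Lemma~\ref{lemma:5.4}-(v)) while $\sigma^{-2}[I_N]\in\MM_N^+$, we have $[b_m]\in\MM_N^+$, which guarantees convergence and invertibility. The standard normalizing-constant and second-moment identities then give $E\{w(\bfa)\}=(\det(\sigma^2[b_m]))^{-1/2}$ and $E\{\bfa\otimes\bfa\,w(\bfa)\}=E\{w(\bfa)\}\,[b_m]^{-1}$, while $E\{\bfa\,w(\bfa)\}=0_N$ because $w$ is invariant and $\bfa$ is odd under $\bfa\mapsto -\bfa$.

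I would then evaluate the determinant using the spectral structure of $[G_m]$. By Lemma~\ref{lemma:5.4}-(iii) and (iv), $[I_N]-[G_m]$ is an orthogonal projection of rank $N-m$, so $\sigma^2[b_m]=[I_N]+(\sigma^2/s^2)([I_N]-[G_m])$ has eigenvalue $1+\sigma^2/s^2$ with multiplicity $N-m$ and eigenvalue $1$ with multiplicity $m$; hence $\det(\sigma^2[b_m])=(1+\sigma^2/s^2)^{N-m}$ and $E\{w(\bfa)\}=(1+\sigma^2/s^2)^{-(N-m)/2}$. Raising this single-row factor to the power $\nu$ by independence of the rows yields $\underline\gamma^c(m)=(1+\sigma^2/s^2)^{-\nu(N-m)/2}$, the first formula in equation~\eqref{eq:8.9}.

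Finally I would assemble the two moment expressions. Since the integrand of $[T_1^c(m)]$ has $(\ell,k)$ entry $E\{(\bfa_k)_\ell\,w(\bfa_k)\}\prod_{k'\ne k}E\{w(\bfa_{k'})\}$ and the first factor vanishes by the odd-symmetry argument, we obtain $[T_1^c(m)]=[0_{N,\nu}]$. Writing $[\bfA^c]^T[\bfA^c]=\sum_{k=1}^\nu\bfa_k\otimes\bfa_k$ and factorizing the product weight, each of the $\nu$ identical terms of $[T_2^c(m)]$ equals $E\{\bfa\otimes\bfa\,w(\bfa)\}\,(E\{w(\bfa)\})^{\nu-1}$; using $E\{\bfa\otimes\bfa\,w(\bfa)\}=E\{w(\bfa)\}\,[b_m]^{-1}$ this is $(E\{w(\bfa)\})^\nu[b_m]^{-1}=\underline\gamma^c(m)\,[b_m]^{-1}$, so summing over $k$ gives $[T_2^c(m)]=\nu\,\underline\gamma^c(m)\,[b_m]^{-1}$, completing equation~\eqref{eq:8.9}. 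The genuine content of the proof is the identification of $[b_m]$ as the precision matrix obtained by completing the square, together with the collapse of $\det(\sigma^2[b_m])$ to $(1+\sigma^2/s^2)^{N-m}$ forced by the idempotency of $[G_m]$; the remaining bookkeeping of the scalar factors so that the leftover power is precisely $\underline\gamma^c(m)$ is mechanical, and is the one point where I would be careful.
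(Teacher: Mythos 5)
Your proof is correct and follows essentially the same route as the paper's: both absorb the exponential weight into the Gaussian density of $[\bfA^c]$ to obtain a Gaussian with precision matrix $[b_m]$, use the projection structure of $[G_m]$ to get $\det(\sigma^2[b_m])=(1+\sigma^2/s^2)^{N-m}$, and read off the three quantities from standard Gaussian moment identities. The only cosmetic difference is that you factorize over the $\nu$ rows before completing the square, whereas the paper completes the square at the matrix level (via the unified functional $\curL_f$) and factorizes afterwards.
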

\begin{proof}
Let $f$ be a mapping on  $\MM_{\nu,N}$ such that the following quantity be defined,
$\curL_f(m)   = E\{ f([\bfA^c]) \, \exp(-\frac{1}{2 s^2} <\![I_N] - [G_m] \, , [\bfA^c]^T \, [\bfA^c]\! >_F )\}$.
Using equation \eqref{eq:8.6},  it can be seen that
$\curL_f(m)   =  (2\pi\sigma^2)^{-\nu N/2} \int_{\MM_{\nu,N}} f([a])\, \exp(-\frac{1}{2}\! <\![a]^T [a] \, , [b_m]\! >_F ) \, d[a]$.
From Lemma~\ref{lemma:5.4}-(iv), $[G_m] = \sum_{\alpha=1}^N \mu_\alpha \bfvarphi^\alpha \otimes \bfvarphi^\alpha$ with $<\! \bfvarphi^\alpha\, , \bfvarphi^\beta\! > =\delta_{\alpha\beta}$, $\mu_1=\ldots = \mu_m = 1$, and $\mu_{m+1}=\ldots =\mu_N = 0$.
Since $[I_N] =\sum_{\alpha=1}^N \bfvarphi^\alpha \otimes \bfvarphi^\alpha$, matrix $[b_m]$ can be rewritten as $[b_m] = \sum_{\alpha=1}^N \zeta_\alpha(m) \,\bfvarphi^\alpha \otimes \bfvarphi^\alpha$
in which  $\zeta_\alpha(m) = 1/\sigma^2$ if $\alpha\leq m$, and  $\zeta_\alpha(m) = 1/s^2 + 1/\sigma^2$ if $\alpha > m$.
Since $\zeta_\alpha(m) > 0, \forall\alpha$, it can be seen that $[b_m]\in\MM_N^+$. Hence,
$[b_m]^{-1} = \sum_{\alpha=1}^N (1/\zeta_\alpha(m)) \,\bfvarphi^\alpha \otimes \bfvarphi^\alpha$ and consequently,
$\det\{[b_m]^{-1}\}  = (\zeta_1(m)\!\times \ldots \times \!\zeta_N(m) ) ^{-1} = \sigma^{2N}\, (1 + \frac{\sigma^2}{s^2})^{-(N-m)}$.
Hence, $\curL_f(m)$ can be rewritten as
$\curL_f(m)   = (1 +\sigma^2/s^2)^{-\nu(N-m)/2 } \int_{\RR^N}\ldots \int_{\RR^N} f([\widehat a]^T)\, p(\widehat\bfa^1)\!\times\!\ldots\!\times\! p(\widehat\bfa^\nu)\,
d\widehat\bfa^1\ldots d\widehat\bfa^\nu$, where $\forall k\in\{1,\ldots,\nu\}$, $\widehat\bfa^k \in \RR^N$ is such that $\forall\ell\in\{1,\ldots , N\}$,
$\widehat a^k_\ell = \{[\widehat a]^T\}_{\ell k}  =  [a]_{k\ell}$, and where
$p(\widehat\bfa^k) = ((2\pi)^{N/2} \sqrt{\det\{[b_m]^{-1}\}})^{-1}\, \exp(-\frac{1}{2} \!<\! [b_m]\,\widehat\bfa^k \, , \widehat\bfa^k\! > )\!$
is the pdf of a Gaussian centered second-order $\RR^N$-valued random variable $\widehat\bfA^k$ whose covariance matrix is $[b_m]^{-1}$.
(i) Taking $f([a]) = 1$, equation \eqref{eq:8.8b} is written as
$\underline \gamma^c(m) = (1 +\sigma^2/s^2)^{-\nu(N-m)/2 }$  $\Pi_{k=1}^\nu\{\int_{\RR^N} p(\widehat\bfa^k)\, d \widehat\bfa^k\}$ that gives the first equation \eqref{eq:8.9}.
(ii) Taking $f([a]) = [a]^T$, equation \eqref{eq:8.8c} is written as
$[T_1^c(m)]   = (1 +\sigma^2/s^2)^{-\nu(N-m)/2 }\int_{\RR^N}\ldots \int_{\RR^N} [\widehat a] \, p(\widehat\bfa^1)\!\times\!\ldots\!\times\! p(\widehat\bfa^\nu)\,
d\widehat\bfa^1\ldots d\widehat\bfa^\nu$, which is equal to $[0_{N,\nu}]$ because $\widehat\bfA^k$ is centered, and therefore, the second equation \eqref{eq:8.9} is proven.
(iii) Finally, taking $f([a]) = [a]^T [a]$, equation \eqref{eq:8.8d} is written as
$[T_2^c(m)]   = (1 +\sigma^2/s^2)^{-\nu(N-m)/2 }\int_{\RR^N}\ldots \int_{\RR^N} [\widehat a] [\widehat a]^T\, p(\widehat\bfa^1)\!\times\!\ldots\!\times\! p(\widehat\bfa^\nu)\, d\widehat\bfa^1\ldots d\widehat\bfa^\nu$ whose entries are
$[T_2^c(m)]_{\ell\ell'}   = (1 +\sigma^2/s^2)^{-\nu(N-m)/2 }\!\sum_{k=1}^\nu\!\int_{\RR^N}\ldots \int_{\RR^N}$ $\widehat a^k_\ell \,  \widehat a^k_{\ell'} \, p(\widehat\bfa^1)\!\times\!\ldots\!\times\! p(\widehat\bfa^\nu)\, d\widehat\bfa^1\ldots d\widehat\bfa^\nu$, which shows that
$[T_2^c(m)]   = (1 +\sigma^2/s^2)^{-\nu(N-m)/2 }$ $\sum_{k=1}^\nu E\{\widehat\bfA^k\otimes \widehat\bfA^k\}$. The third equation \eqref{eq:8.9} is then directly deduced.
\end{proof}
\begin{proposition}[\textbf{Expression of} $r^c(m)$]\label{proposition:8.11}
For all $m$ in $\{1,\ldots , N\}$, we have
\begin{equation}\label{eq:8.10}
  r^c(m) = 1 \, .
\end{equation}
\end{proposition}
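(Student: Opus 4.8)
The plan is to substitute the closed-form evaluations of Lemma~\ref{lemma:8.10} directly into the defining relation \eqref{eq:8.8a} and to recognize the outcome as $\underline g(m)$, furnished by Lemma~\ref{lemma:7.11}. First, since $[T_1^c(m)] = [0_{N,\nu}]$, the cross term $-2(\widehat s/s)[T_1^c(m)]\,[\eta_d]$ in \eqref{eq:8.8a} drops out entirely, leaving $r^c(m)\,\underline g(m) = 1 - \varepsilon_d(m)^2 + (\underline \gamma^c(m)\,\Vert\eta_d\Vert^2)^{-1}\,(\widehat s^2/s^2)\, <\! [T_2^c(m)]\, , [G_m]\! >_F$. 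Substituting $[T_2^c(m)] = \underline \gamma^c(m)\,\nu\,[b_m]^{-1}$ then cancels the factor $\underline \gamma^c(m)$ against the prefactor $1/\underline \gamma^c(m)$, so that $r^c(m)\,\underline g(m) = 1 - \varepsilon_d(m)^2 + (\widehat s^2/s^2)\,\nu\,\Vert\eta_d\Vert^{-2}\, <\! [b_m]^{-1}\, , [G_m]\! >_F$.

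The one computation requiring care is the Frobenius pairing $<\! [b_m]^{-1}\, , [G_m]\! >_F = \tr\{[b_m]^{-1}\,[G_m]\}$. I would exploit the common orthonormal eigenbasis $\{\bfvarphi^\alpha\}_{\alpha=1}^N$ already introduced in the proof of Lemma~\ref{lemma:8.10}: by Lemma~\ref{lemma:5.4}-(iv) we have $[G_m] = \sum_{\alpha=1}^m \bfvarphi^\alpha\otimes\bfvarphi^\alpha$, while $[b_m]^{-1} = \sum_{\alpha=1}^N \zeta_\alpha(m)^{-1}\,\bfvarphi^\alpha\otimes\bfvarphi^\alpha$ with $\zeta_\alpha(m)^{-1} = \sigma^2$ for $\alpha\leq m$. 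Hence only the first $m$ modes contribute, and $<\! [b_m]^{-1}\, , [G_m]\! >_F = \sum_{\alpha=1}^m \zeta_\alpha(m)^{-1} = m\,\sigma^2$.

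It then remains to simplify the constants. Using $\Vert\eta_d\Vert^2 = \nu(N-1)$ from equation \eqref{eq:2.4} together with $\sigma^2 = 1 - 1/N = (N-1)/N$ from \eqref{eq:8.6}, the coefficient reduces to $(\widehat s^2/s^2)\,\nu\,\Vert\eta_d\Vert^{-2}\, m\,\sigma^2 = (\widehat s^2/s^2)\,(m/N)$, whence $r^c(m)\,\underline g(m) = 1 + (\widehat s^2/s^2)(m/N) - \varepsilon_d(m)^2$. But this is precisely the expression for $\underline g(m)$ given by the first equation \eqref{eq:7.12}; therefore $r^c(m)\,\underline g(m) = \underline g(m)$. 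Since $\underline g(m) > 0$ (second equation \eqref{eq:7.12}), I may divide to obtain $r^c(m) = 1$ for every $m$ in $\{1,\ldots,N\}$, which is \eqref{eq:8.10}.

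I do not anticipate any genuine obstacle: the entire difficulty was front-loaded into Lemma~\ref{lemma:8.10}, and what remains is the diagonalization argument for $\tr\{[b_m]^{-1}\,[G_m]\}$ together with bookkeeping of the normalizing constants. The only point to watch is that both the cancellation of $\underline \gamma^c(m)$ and the identity $\sum_{\alpha\leq m}\zeta_\alpha(m)^{-1} = m\,\sigma^2$ rely on $[G_m]$ and $[b_m]$ being simultaneously diagonal in $\{\bfvarphi^\alpha\}$, which is guaranteed because $[b_m]$ is an affine function of $[I_N] - [G_m]$ and hence commutes with $[G_m]$.
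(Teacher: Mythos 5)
Your proposal is correct and follows essentially the same route as the paper's own proof: substitute the closed forms of Lemma~\ref{lemma:8.10} into equation \eqref{eq:8.8a}, cancel $\underline\gamma^c(m)$, evaluate $<\! [b_m]^{-1}, [G_m]\! >_F = m\sigma^2$ via the common eigenbasis $\{\bfvarphi^\alpha\}$, and recognize the result as $\underline g(m)$ from equation \eqref{eq:7.12}. Your added remarks (the explicit positivity $\underline g(m)>0$ justifying the division, and the commutation of $[b_m]$ with $[G_m]$) are minor refinements of the same argument, not a different approach.
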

\begin{proof}
Substituting the second and the third equation \eqref{eq:8.9} into equation \eqref{eq:8.8a} yields
$ r^c(m)\, \underline g(m)  = 1  - \varepsilon_d(m)^2  + (\underline \gamma^c(m) \Vert\eta_d\Vert^2)^{-1}(\widehat s^2 / s^2)
                             \underline \gamma^c(m)\, \nu\! <\!  [b_m]^{-1} , [G_m]\! >_F $.
Since $[G_m] = \sum_{\alpha=1}^N \mu_\alpha \, \bfvarphi^\alpha \otimes \bfvarphi^\alpha$ and using the proof of Lemma~\ref{lemma:8.10}, it can be deduced that
$<\!  [b_m]^{-1}\, , [G_m]\! >_F = \sum_{\alpha=1}^N  \sum_{\beta=1}^N (\mu_\beta/\zeta_\alpha(m)) <\! \bfvarphi^\alpha \otimes \bfvarphi^\alpha\, , \bfvarphi^\beta \otimes \bfvarphi^\beta\! >_F = \sum_{\alpha=1}^m 1/\zeta_\alpha(m) = m\,\sigma^2 = m \,\Vert\eta_d\Vert^2/(\nu N)$. Therefore,
$r^c(m)\, \underline g(m) =  1  - \varepsilon_d(m)^2 + (\widehat s^2 / s^2) (m/N)$.
It can be seen that the right-hand side of this equation is $\underline g(m)$ defined by equation \eqref{eq:7.12}. Consequently, $r^c(m)=1$.
\end{proof}
\begin{remark}[\textbf{MaxEnt approximation} $d_N^{2,c}(m)$ \textbf{of} $d_N^2(m)$ \textbf{for all} $m\geq m_\opt$]\label{remark:8.12}
Using Theorem~\ref{theorem:7.13} and equation \eqref{eq:7.7a}, the MaxEnt approximation of $d_N^2(m)$ is defined, for all $m\geq m_\opt$, as $d_N^{2,c}(m) = f_d(m) + r^c(m)\, \underline g(m)$ in which $f_d(m)$ is defined by equation \eqref{eq:7.7b} and $\underline g(m)$ by the first equation \eqref{eq:7.12}. From equations \eqref{eq:8.10}, Theorem~\ref{theorem:7.13} and its proof,  it can be deduced that for all $m\geq m_\opt$,  $d_N^{2,c}(m) = f_d(m) +\underline g(m) = d_N^{2,\psup}(m)$, and consequently, using equation \eqref{eq:7.18},
\begin{equation}\label{eq:8.11}
\forall m\geq m_\opt \,\, , \, \, d_N^{2,c}(m) = 1 +\frac{m}{N-1}   \quad ; \quad d_N^{2,c}(N) = d_N^2(N) = 1 + \frac{N}{N-1} \, .
\end{equation}
\end{remark}
\begin{remark}[\textbf{Rough approximation} $d_N^{2,\papprox}(m)$ \textbf{of} $d_N^2(m)$ \textbf{for all} $m\geq m_\opt$]\label{remark:8.13}
In this remark, for $m\geq m_\opt$, we define a "rough approximation" $r_\papprox(m)$ of $r(m)$ defined by equation \eqref{eq:7.11}.
Let $\bfj_o =(1,2,\ldots , N)\in \curJ\subset \NN^N$.
Equation \eqref{eq:4.3} shows that $[\eta_d(\bfj_o)] = [\eta_d]$, and consequently, equations \eqref{eq:7.4} and \eqref{eq:7.9} yield $[\eta_d^m(\bfj_o)] = [\eta^m_d]$. Hence, equation \eqref{eq:7.8} yields
$\gamma_{\bfj_o}(m) = \exp(-(2s^2)^{-1} \Vert \eta_d - \eta_d^m\Vert^2)$, which can be rewritten, using equation \eqref{eq:7.5}, as
$\gamma_{\bfj_o}(m) = \exp(-(2s^2)^{-1} \varepsilon_d(m)^2\Vert \eta_d\Vert^2)$. Let us assume that, for $m\geq m_\opt$, $\underline\gamma(m) \simeq \underline\gamma(N) = 1$ (due to equation \eqref{eq:7.13}).
Starting from equation \eqref{eq:7.11}, we define  $r_\papprox(m) = N^{-N} (\sum_{\bfj\in\curJ} \gamma_{\bfj_o}(m)\, g_\bfj(m))\, (\underline\gamma(N)\, \underline g(m))^{-1}$ $= \gamma_{\bfj_o}(m)$.
Therefore, $\forall m\geq m_\opt$, $r_\papprox(m) = \exp(- (2s^2)^{-1} \, \varepsilon_d(m)^2 \, \Vert\eta_d\Vert^2)$.
Since $\varepsilon_d(N) = 0$ (see Lemma~\ref{lemma:7.7}) and since $\gamma_{\bfj_o}(N) = 1$, it can be seen that $r_\papprox(N)=r(N)=1$. Finally, using equations \eqref{eq:7.7a} and \eqref{eq:7.14}, the corresponding approximation $d_N^{2,\papprox}(m)$ of $d_N^2(m)$ is written as
\begin{equation}\label{eq:8.12}
\forall m\geq m_\opt \quad , \quad  d_N^{2,\papprox}(m) = f_d(m) + \underline g(m)\, \exp(-\frac{1}{2s^2} \, \varepsilon_d(m)^2 \, \Vert\eta_d\Vert^2) \, ,
\end{equation}
in which $f_d(m)$ is defined by equation \eqref{eq:7.7b}, $\underline g(m)$ by the first equation \eqref{eq:7.12}, and $\varepsilon_d(m)$ by equation \eqref{eq:7.5}.
Since $r_\papprox(N)=1$, using Lemma~\ref{lemma:7.12}, the third equation \eqref{eq:7.12}, and equation \eqref{eq:3.2} yield
$d_N^{2,\papprox}(N) = d_N^2(N) = 1 + N/(N-1)$.
\end{remark}
\section{Numerical illustration}
\label{sec:9}
The numerical illustration proposed is the application (AP1) in Section~10 of reference \cite{Soize2019d}.
For reasons of limitation of the paper length, we cannot reproduce the description of this application and we refer the reader to the given reference.
With respect to the notation introduced in \eqref{sec:1.1}, we have $n_w=20$, $n_q=200$, $n=220$, and $N=200$.
For the PCA (see \eqref{sec:2}) and for  $\varepsilon = 10^{-6}$ in the second equation \eqref{eq:2.1}, we have $\nu=9$. Consequently, $\err_\PCA(\nu) \leq 10^{-6}$.
Concerning the nonparametric estimate (see \eqref{sec:3}), the values of the parameters defined by equation \eqref{eq:3.2} are
$s=0.615$, $\widehat s = 0.525$, and $\widehat s / s = 0.853$.
The use of equations \eqref{eq:5.1} and \eqref{eq:5.2} yields $\varepsilon_\opt = 60$ and $m_\opt=10$. Parameter $\kappa$ has been fixed to $1$.
The graph of function $\alpha \mapsto \log(\lambda_\alpha(\varepsilon_\opt))$ (see \eqref{sec:5.1}) is displayed in \eqref{figure:2} (left) and the graph of function $m\mapsto \varepsilon_d(m)$  defined by equation \eqref{eq:7.5} is shown in \eqref{figure:2} (right). In order to better visualize these graphs, a zoom has been done for the abscissa ($\alpha\leq 50$ and $m\leq 50$ instead of the upper bound $N = 200$). It can be seen that these graphs are similar to the ones shown in \eqref{figure:1} and that Hypotheses~\ref{hypothesis:5.2} and \ref{hypothesis:7.8} are well verified.
\begin{figure}[htbp]
  \centering
  \includegraphics[width=6.0cm]{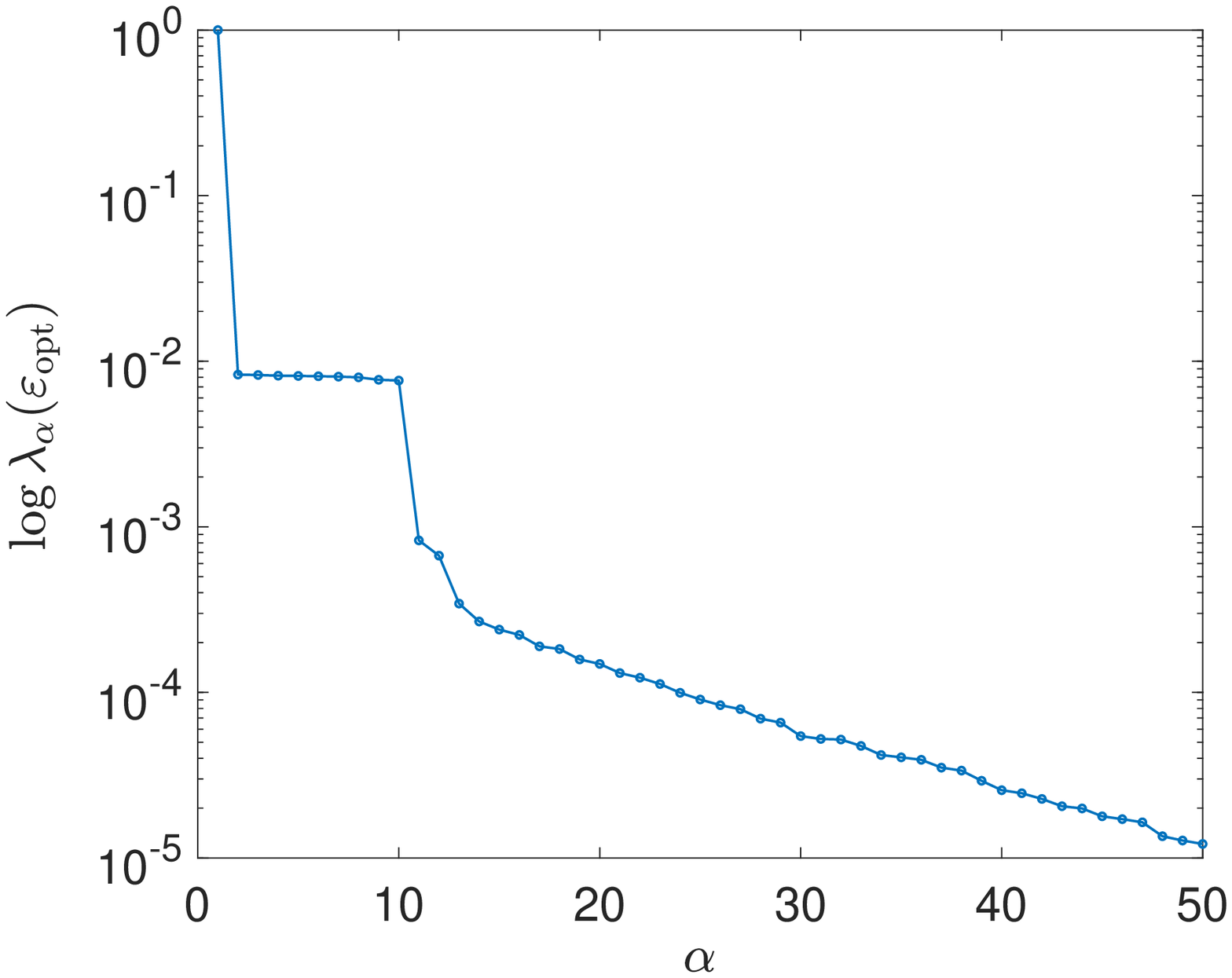}  \includegraphics[width=6.0cm]{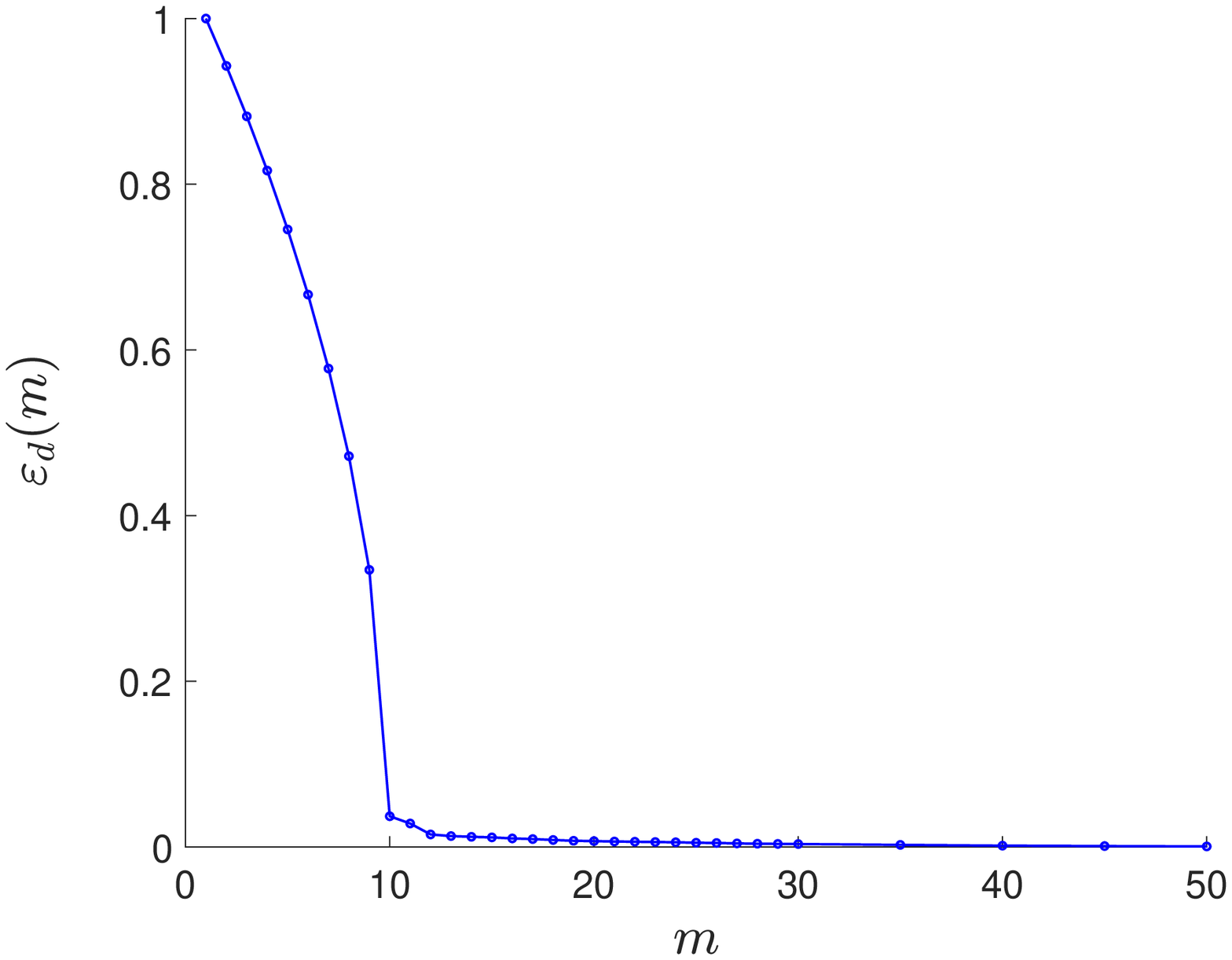}
  \caption{Left figure: distribution of the eigenvalues $\lambda_\alpha(\varepsilon_\opt)$ in log scale as a function of rank $\alpha\leq 50$ for $\varepsilon_\DM = \varepsilon_\opt = 60$. Right figure: graph of function $m\mapsto \varepsilon_d(m)$ for $m\leq 50$.}
  \label{figure:2}
\end{figure}
The graph of function $m \mapsto f_d(m)$ defined by equation \eqref{eq:7.7b} is displayed in \eqref{figure:3} (left) and the graph of function $m\mapsto \underline g(m)$ defined by equation \eqref{eq:7.12} is shown in \eqref{figure:2} (right). It can be seen that $f_d$ has a minimum for $m=m_\opt = 10$.
\begin{figure}[htbp]
  \centering
  \includegraphics[width=6.0cm]{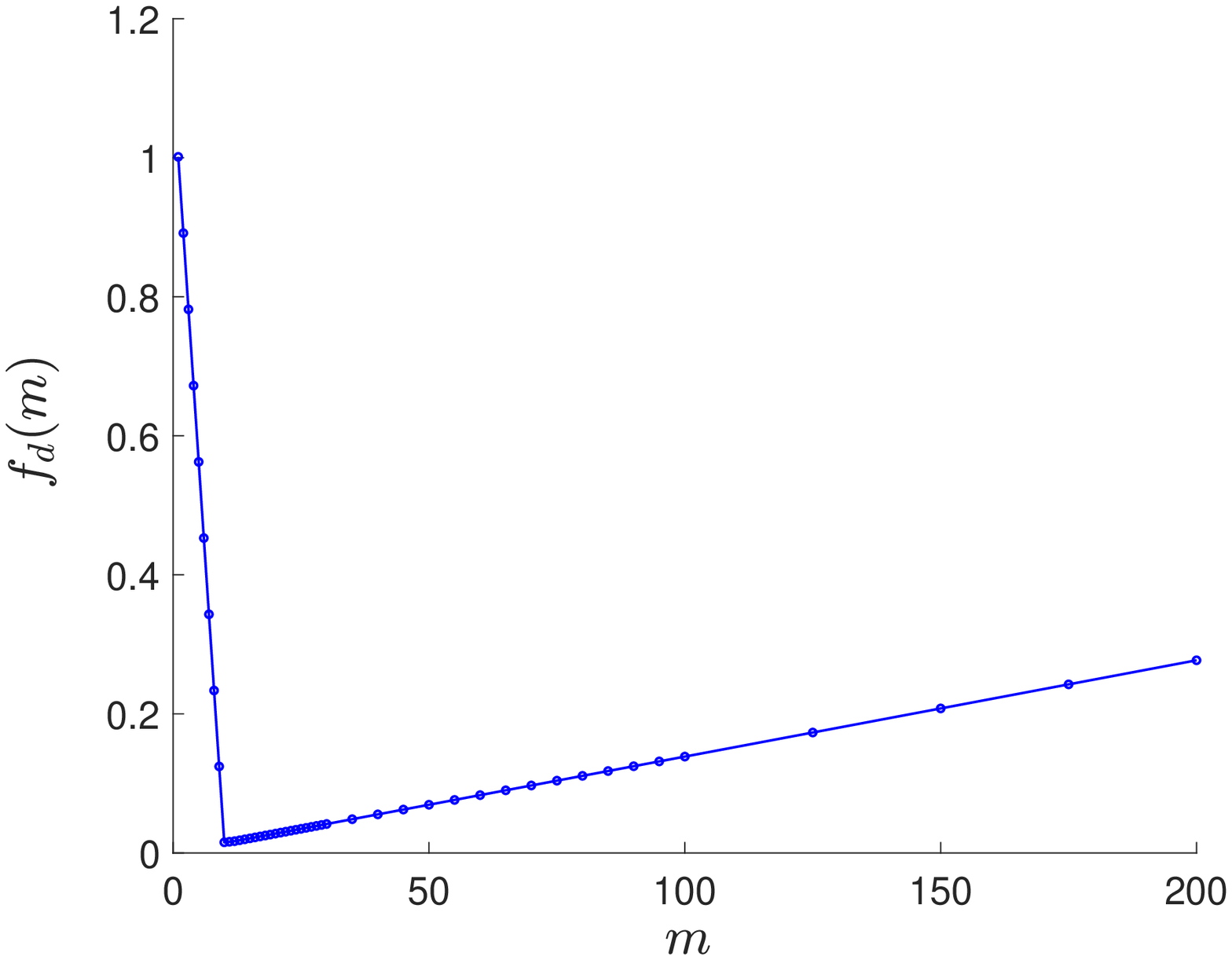}  \includegraphics[width=6.0cm]{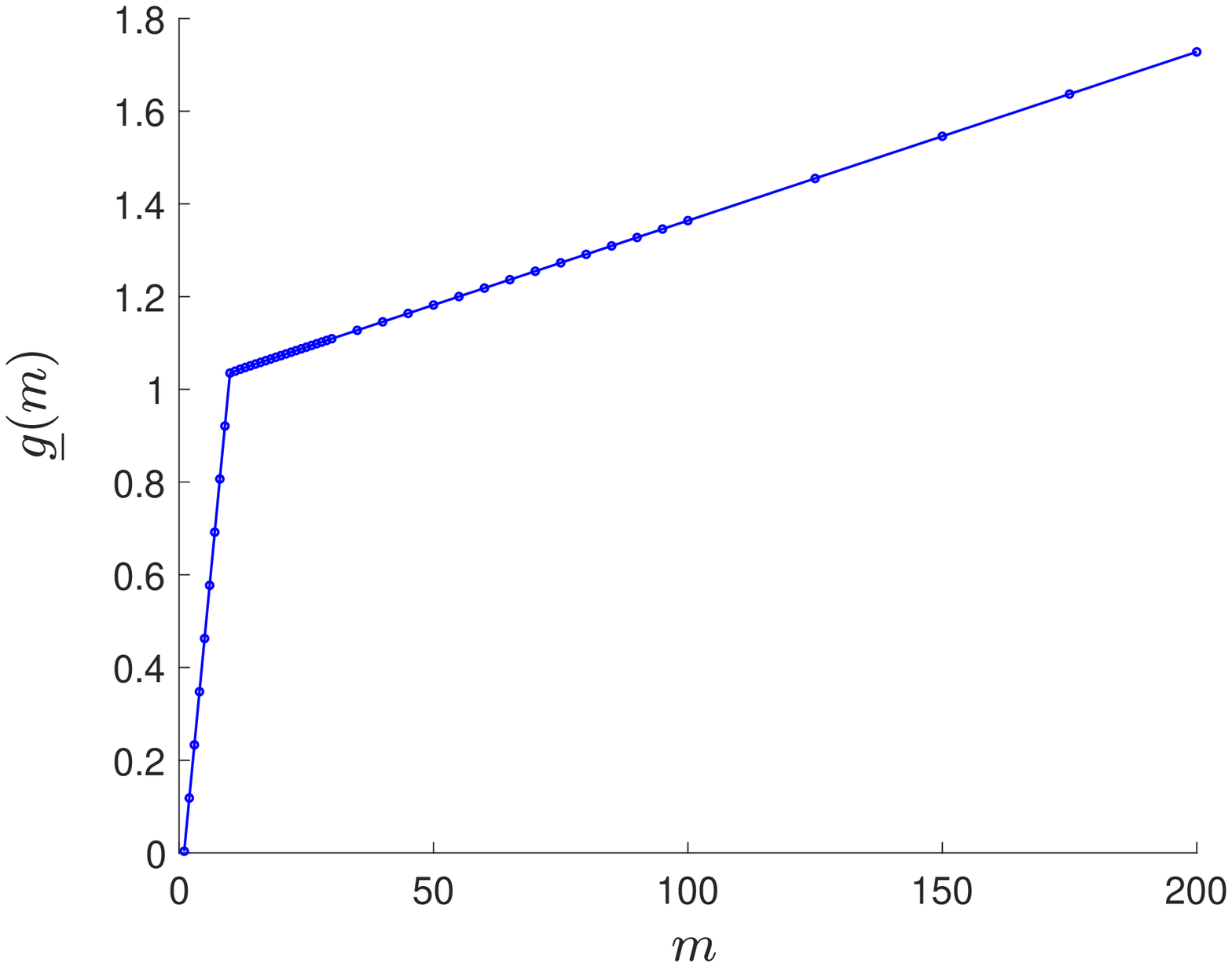}
  \caption{Left figure: graph of function $m \mapsto f_d(m)$. Right figure: graph of function $m\mapsto \underline g(m)$.}
  \label{figure:3}
\end{figure}
For all $m$ such that $1\leq m\leq N$, the estimation $d_N^{2,\simul}(m) =  \Vert\eta_d\Vert^{-2} \frac{1}{n_\pMC} \sum_{\ell=1}^{n_\pMC} \Vert \, [\eta_\ar^\ell] - [\eta_d]\Vert^2$ of $d_N^2(m)$ defined in Definition~\ref{definition:7.1} has been carried out using the learned dataset $\{[\eta_\ar^\ell]\, , \ell =1,\ldots , n_\pMC\}$ with $n_\pMC =320\, 000$ realizations of random matrix $[\bfH^N_m]$, which  have been computed with the PLoM method presented in \eqref{sec:6}. It has been verified that the $L^2$-convergence is obtained for this value of $n_\pMC$. Left \eqref{figure:4} shows the graph of function $m\mapsto d_N^{2,\simul}(m)$. It can be seen that the local minimum is a global minimum obtained for $ m = m_\opt$ as expected and that $d_N^{2,\simul}(N) \simeq 2$ (in agreement with Lemma~\ref{lemma:7.2}). Right \eqref{figure:4} shows three curves: again the graph of $m\mapsto d_N^{2,\simul}(m)$ in order to have a reference, and for $m\geq m_\opt$, the graph of function $m \mapsto d_N^{2,c}(m)$ computed with equation \eqref{eq:8.11} and the graph of function $m \mapsto d_N^{2,\papprox}(m)$ computed with equation \eqref{eq:8.12}. It can be seen that the graph of $m \mapsto d_N^{2,c}(m)$ is in coherence with Theorem~\ref{theorem:7.13} and that
the graph of function $m \mapsto d_N^{2,\papprox}(m)$ has a minimum in $ m = m_\opt = 10$ on $\curM_\opt$, as expected.
\begin{figure}[htbp]
  \centering
  \includegraphics[width=6.0cm]{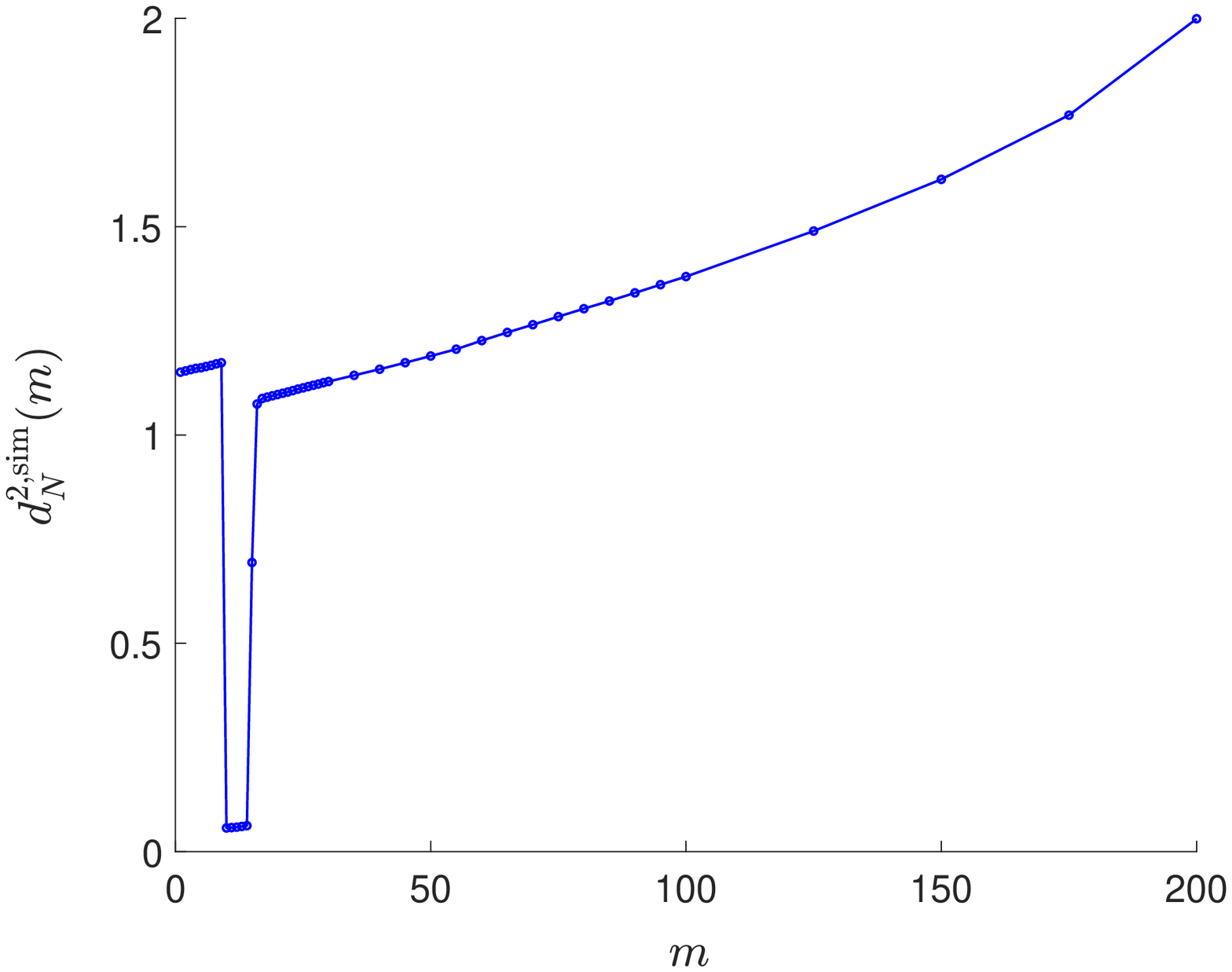}  \includegraphics[width=6.0cm]{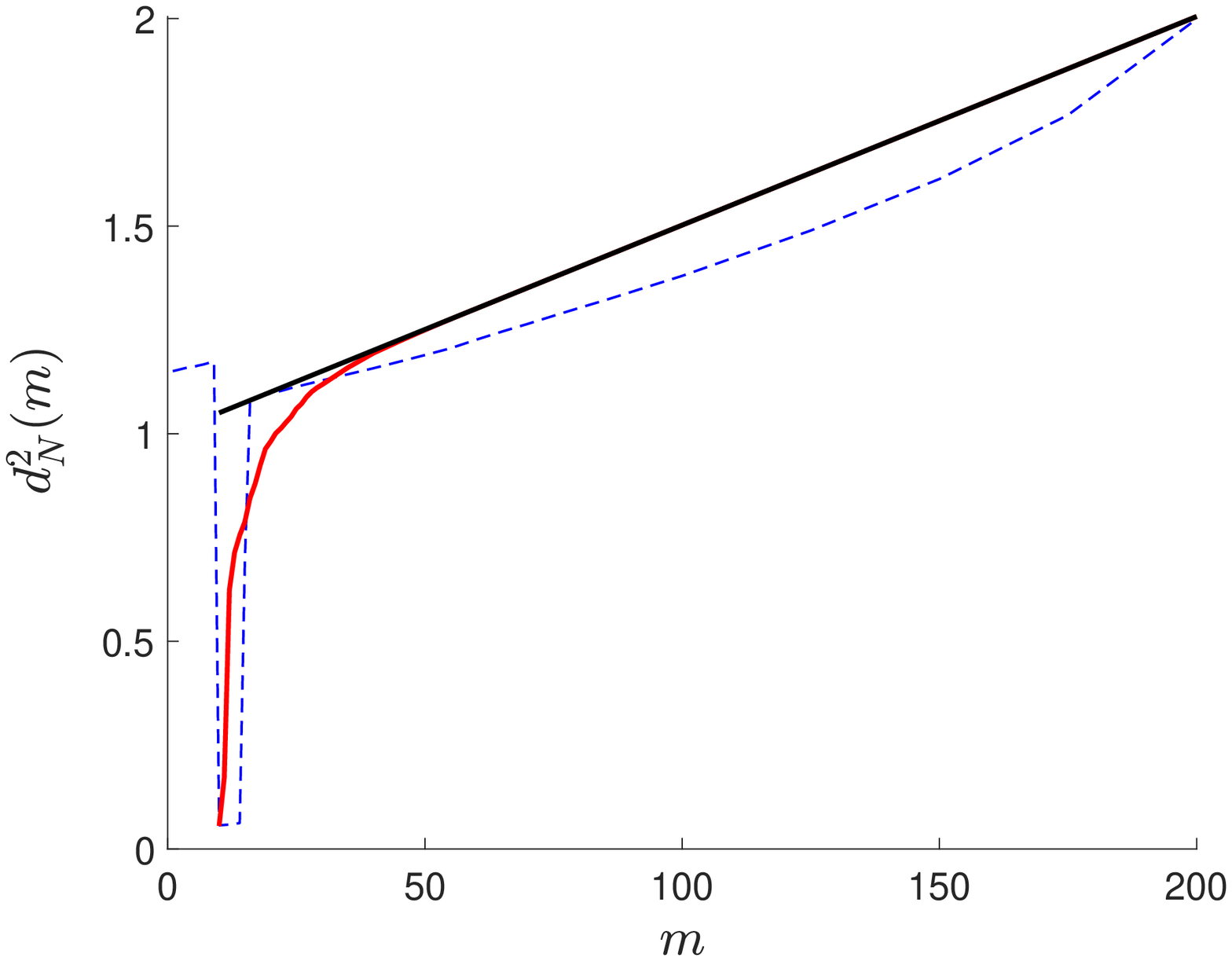}
  \caption{Left figure: graph of function $m \mapsto d_N^{2,\simul}(m)$. Right figure: graph of function $m \mapsto d_N^{2,\simul}(m)$ (blue dashed line), and for $m\geq m_\opt$, graphs of $m \mapsto d_N^{2,c}(m)$ (dark thick straight line) and $m \mapsto d_N^{2,\papprox}(m)$ (red thick curve line).}
  \label{figure:4}
\end{figure}
\section{Conclusions}
\label{sec:10}
In this paper, we have presented mathematical results that justify, highlight, and better explain the probabilistic learning on manifolds proposed in \cite{Soize2016}. We have formulated and proven several results, which show that the PLoM methodology is efficient for probabilistic learning as it has been demonstrated  in the framework of applications performed for complex engineering systems.  The distance introduced for the mathematical analysis of the concentration properties of the probability measure could be used to estimate the optimal dimension of the reduced-order diffusion maps basis and thus to replace the algorithm previously introduced, which uses only the initial dataset. However, the criterion based on this distance would require to generate a large number of replicates of the learned dataset and therefore would induce a larger numerical cost.
\section*{Acknowledgments}
This research was partially supported by the PIRATE pro\-ject funded under DARPA’s AIRA program and by FASTMATH SciDac Institute supported under DOE's ASCR program.
\bibliographystyle{elsarticle-num}
\bibliography{references}

\begin{thebibliography}{10}
\expandafter\ifx\csname url\endcsname\relax
  \def\url#1{\texttt{#1}}\fi
\expandafter\ifx\csname urlprefix\endcsname\relax\def\urlprefix{URL }\fi
\expandafter\ifx\csname href\endcsname\relax
  \def\href#1#2{#2} \def\path#1{#1}\fi

\bibitem{Soize2016}
C.~Soize, R.~Ghanem, Data-driven probability concentration and sampling on
  manifold, Journal of Computational Physics 321 (2016) 242--258.
\newblock \href {http://dx.doi.org/10.1016/j.jcp.2016.05.044}
  {\path{doi:10.1016/j.jcp.2016.05.044}}.

\bibitem{Soize2019b}
C.~Soize, R.~G. Ghanem, C.~Safta, X.~Huan, Z.~P. Vane, J.~C. Oefelein,
  G.~Lacaze, H.~N. Najm, Q.~Tang, X.~Chen, Entropy-based closure for
  probabilistic learning on manifolds, Journal of Computational Physics 388
  (2019) 528--533.
\newblock \href {http://dx.doi.org/10.1016/j.jcp.2018.12.029}
  {\path{doi:10.1016/j.jcp.2018.12.029}}.

\bibitem{Soize2017a}
C.~Soize, R.~Ghanem, Polynomial chaos representation of databases on manifolds,
  Journal of Computational Physics 335 (2017) 201--221.
\newblock \href {http://dx.doi.org/10.1016/j.jcp.2017.01.031}
  {\path{doi:10.1016/j.jcp.2017.01.031}}.

\bibitem{Soize2019c}
C.~Soize, R.~G. Ghanem, Physics-constrained non-gaussian probabilistic learning
  on manifolds, International Journal for Numerical Methods in Engineering
  (2019) 1--36\href {http://dx.doi.org/10.1002/nme.6202}
  {\path{doi:10.1002/nme.6202}}.

\bibitem{Soize2019d}
C.~Soize, R.~G. Ghanem, C.~Desceliers, Sampling of bayesian posteriors with a
  non-gaussian probabilistic learning on manifolds from a small dataset, arXiv
  preprint arXiv:1910.12717, submit 2904539, stat.ML, 28 Oct 2019.

\bibitem{Ghanem2018a}
R.~Ghanem, C.~Soize, Probabilistic nonconvex constrained optimization with
  fixed number of function evaluations, International Journal for Numerical
  Methods in Engineering 113~(4) (2018) 719--741.
\newblock \href {http://dx.doi.org/10.1002/nme.5632}
  {\path{doi:10.1002/nme.5632}}.

\bibitem{Ghanem2019}
R.~G. Ghanem, C.~Soize, C.~Safta, X.~Huan, G.~Lacaze, J.~C. Oefelein, H.~N.
  Najm, Design optimization of a scramjet under uncertainty using probabilistic
  learning on manifolds, Journal of Computational Physics 399 (2019) 108930.
\newblock \href {http://dx.doi.org/10.1016/j.jcp.2019.108930}
  {\path{doi:10.1016/j.jcp.2019.108930}}.

\bibitem{Soize2019}
C.~Soize, R.~G. Ghanem, C.~Safta, X.~Huan, Z.~P. Vane, J.~C. Oefelein,
  G.~Lacaze, H.~N. Najm, Enhancing model predictability for a scramjet using
  probabilistic learning on manifolds, AIAA Journal 57~(1) (2019) 365--378.
\newblock \href {http://dx.doi.org/10.2514/1.J057069}
  {\path{doi:10.2514/1.J057069}}.

\bibitem{Ghanem2018b}
R.~Ghanem, C.~Soize, C.~Thimmisetty, Optimal well-placement using probabilistic
  learning, Data-Enabled Discovery and Applications 2~(1) (2018) 4,1--16.
\newblock \href {http://dx.doi.org/10.1007/s41688-017-0014-x}
  {\path{doi:10.1007/s41688-017-0014-x}}.

\bibitem{Soize2018a}
C.~Soize, Design optimization under uncertainties of a mesoscale implant in
  biological tissues using a probabilistic learning algorithm, Computational
  Mechanics 62~(3) (2018) 477--497.
\newblock \href {http://dx.doi.org/10.1007/s00466-017-1509-x}
  {\path{doi:10.1007/s00466-017-1509-x}}.

\bibitem{Soize2019a}
C.~Soize, C.~Farhat, Probabilistic learning for modeling and quantifying
  model-form uncertainties in nonlinear computational mechanics, International
  Journal for Numerical Methods in Engineering 117 (2019) 819--843.
\newblock \href {http://dx.doi.org/10.1002/nme.5980}
  {\path{doi:10.1002/nme.5980}}.

\bibitem{Guilleminot2019}
J.~Guilleminot, J.~Dolbow, Data-driven enhancement of fracture paths in random
  composites, Mechanics Research Communications Accepted for publication
  November 7, 2019.

\bibitem{Soize2015}
C.~Soize, Polynomial chaos expansion of a multimodal random vector, SIAM-ASA
  Journal on Uncertainty Quantification 3~(1) (2015) 34--60.
\newblock \href {http://dx.doi.org/10.1137/140968495}
  {\path{doi:10.1137/140968495}}.

\bibitem{Duong2005}
T.~Duong, M.~L. Hazelton, Cross-validation bandwidth matrices for multivariate
  kernel density estimation, Scandinavian Journal of Statistics 32~(3) (2005)
  485--506.
\newblock \href {http://dx.doi.org/10.1111/j.1467-9469.2005.00445.x}
  {\path{doi:10.1111/j.1467-9469.2005.00445.x}}.

\bibitem{Duong2008}
T.~Duong, A.~Cowling, I.~Koch, M.~Wand, Feature significance for multivariate
  kernel density estimation, Computational Statistics \& Data Analysis 52~(9)
  (2008) 4225--4242.
\newblock \href {http://dx.doi.org/10.1016/j.csda.2008.02.035}
  {\path{doi:10.1016/j.csda.2008.02.035}}.

\bibitem{Filippone2011}
M.~Filippone, G.~Sanguinetti, Approximate inference of the bandwidth in
  multivariate kernel density estimation, Computational Statistics \& Data
  Analysis 55~(12) (2011) 3104--3122.
\newblock \href {http://dx.doi.org/10.1016/j.csda.2011.05.023}
  {\path{doi:10.1016/j.csda.2011.05.023}}.

\bibitem{Zougab2014}
N.~Zougab, S.~Adjabi, C.~C. Kokonendji, Bayesian estimation of adaptive
  bandwidth matrices in multivariate kernel density estimation, Computational
  Statistics \& Data Analysis 75 (2014) 28--38.
\newblock \href {http://dx.doi.org/10.1016/j.csda.2014.02.002}
  {\path{doi:10.1016/j.csda.2014.02.002}}.

\bibitem{Bowman1997}
A.~Bowman, A.~Azzalini, Applied Smoothing Techniques for Data Analysis: The
  Kernel Approach With S-Plus Illustrations, Vol.~18, Oxford University Press,
  Oxford: Clarendon Press, New York, 1997.
\newblock \href {http://dx.doi.org/10.1007/s001800000033}
  {\path{doi:10.1007/s001800000033}}.

\bibitem{Parzen1962}
E.~Parzen, On estimation of a probability density function and mode, Annals of
  Mathematical Statistics 33~(3) (1962) 1065--1076.
\newblock \href {http://dx.doi.org/10.1214/aoms/1177704472}
  {\path{doi:10.1214/aoms/1177704472}}.

\bibitem{Coifman2005}
R.~Coifman, S.~Lafon, A.~Lee, M.~Maggioni, B.~Nadler, F.~Warner, S.~Zucker,
  Geometric diffusions as a tool for harmonic analysis and structure definition
  of data: Diffusion maps, PNAS 102~(21) (2005) 7426--7431.
\newblock \href {http://dx.doi.org/10.1073/pnas.0500334102}
  {\path{doi:10.1073/pnas.0500334102}}.

\bibitem{Coifman2006}
R.~Coifman, S.~Lafon, Diffusion maps, Applied and Computational Harmonic
  Analysis 21~(1) (2006) 5--30.
\newblock \href {http://dx.doi.org/10.1016/j.acha.2006.04.006}
  {\path{doi:10.1016/j.acha.2006.04.006}}.

\bibitem{Coifman2006b}
R.~Coifman, S.~Lafon, Geometric harmonics: A novel tool for multiscale
  out-of-sample extension of empirical functions, Applied and Computational
  Harmonic Analysis 21 (2006) 31--52.
\newblock \href {http://dx.doi.org/10.1016/j.acha.2005.07.005}
  {\path{doi:10.1016/j.acha.2005.07.005}}.

\bibitem{Lafon2006}
S.~Lafon, A.~B. Lee, Diffusion maps and coarse-graining: A unified framework
  for dimensionality reduction, graph partitioning, and data set
  parameterization, IEEE transactions on pattern analysis and machine
  intelligence 28~(9) (2006) 1393--1403.
\newblock \href {http://dx.doi.org/10.1109/TPAMI.2006.184}
  {\path{doi:10.1109/TPAMI.2006.184}}.

\bibitem{Farhat2019}
C.~Farhat, R.~Tezaur, T.~Chapman, P.~Avery, C.~Soize, Feasible probabilistic
  learning method for model-form uncertainty quantification in vibration
  analysis, AIAA Journal 57~(11) (2019) 4978--4991.
\newblock \href {http://dx.doi.org/10.2514/1.J057797}
  {\path{doi:10.2514/1.J057797}}.

\bibitem{Soize2008b}
C.~Soize, Construction of probability distributions in high dimension using the
  maximum entropy principle. applications to stochastic processes, random
  fields and random matrices, International Journal for Numerical Methods in
  Engineering 76~(10) (2008) 1583--1611.
\newblock \href {http://dx.doi.org/10.1002/nme.2385}
  {\path{doi:10.1002/nme.2385}}.

\bibitem{Neal2011}
R.~Neal, {MCMC} using hamiltonian dynamics, in: S.~Brooks, A.~Gelman, G.~Jones,
  X.-L. Meng (Eds.), Handbook of Markov Chain Monte Carlo, Chapman and Hall-CRC
  Press, Boca Raton, 2011, Ch.~5.
\newblock \href {http://dx.doi.org/10.1201/b10905-6}
  {\path{doi:10.1201/b10905-6}}.

\bibitem{Girolami2011}
M.~Girolami, B.~Calderhead, {R}iemann manifold {L}angevin and {H}amiltonian
  {M}onte {C}arlo methods, Journal of the Royal Statistics Society 73~(2)
  (2011) 123--214.
\newblock \href {http://dx.doi.org/10.1111/j.1467-9868.2010.00765.x}
  {\path{doi:10.1111/j.1467-9868.2010.00765.x}}.

\bibitem{Kaipio2005}
J.~Kaipio, E.~Somersalo, Statistical and Computational Inverse Problems, Vol.
  160, Springer Science \& Business Media, 2005.
\newblock \href {http://dx.doi.org/10.1007/b138659}
  {\path{doi:10.1007/b138659}}.

\bibitem{Robert2005}
C.~Robert, G.~Casella, Monte Carlo Statistical Methods, Springer Science \&
  Business Media, 2005.
\newblock \href {http://dx.doi.org/10.1007/978-1-4757-4145-2}
  {\path{doi:10.1007/978-1-4757-4145-2}}.

\bibitem{Spall2003}
J.~C. Spall, Introduction to Stochastic Searh and Optimization,
  Wiley-Interscience, 2003.
\newblock \href {http://dx.doi.org/10.1002/0471722138}
  {\path{doi:10.1002/0471722138}}.

\bibitem{Soize1994}
C.~Soize, The Fokker-Planck Equation for Stochastic Dynamical Systems and its
  Explicit Steady State Solutions, Vol. Series on Advances in Mathematics for
  Applied Sciences: Vol 17, World Scientific, Singapore, 1994.
\newblock \href {http://dx.doi.org/10.1142/2347} {\path{doi:10.1142/2347}}.

\bibitem{Kree1986}
P.~Kr{\'e}e, C.~Soize, Mathematics of Random Phenomena, Reidel Pub. Co, 1986,
  (first published by Bordas in 1983 and also published by Springer Science \&
  Business Media in 2012).

\bibitem{Doob1953}
J.~L. Doob, Stochastic processes, John Wiley \& Sons, New York, 1953.

\bibitem{Khasminskii2012}
R.~Khasminskii, Stochastic Stability of Differential Equations, Vol.~66,
  Springer-Verlag, Berlin, Heidelberg, 2012, originally published in Russian,
  by Nauka, Moskow, 1969. First English edition published in 1980 under R.Z.
  Has'minski in the series Mechanics: Analysis by Sijthoff \& Noordhoff.
\newblock \href {http://dx.doi.org/10.1007/978-3-642-23280-0}
  {\path{doi:10.1007/978-3-642-23280-0}}.

\bibitem{Hairer2006}
E.~Hairer, C.~Lubich, G.~Wanner, Geometric Numerical Integration.
  Structure-Preserving Algorithms for Ordinary Differential Equations, Vol.~31,
  Springer Science \& Business Media, 2006.
\newblock \href {http://dx.doi.org/10.1007/3-540-30666-8}
  {\path{doi:10.1007/3-540-30666-8}}.

\bibitem{Burrage2007}
K.~Burrage, I.~Lenane, G.~Lythe, Numerical methods for second-order stochastic
  differential equations, SIAM Journal on Scientific Computing 29~(1) (2007)
  245--264.
\newblock \href {http://dx.doi.org/10.1137/050646032}
  {\path{doi:10.1137/050646032}}.

\bibitem{Shannon1948}
C.~E. Shannon, A mathematical theory of communication, Bell system technical
  journal 27~(3) (1948) 379--423 \& 623--659.
\newblock \href {http://dx.doi.org/10.1002/j.1538-7305.1948.tb01338.x}
  {\path{doi:10.1002/j.1538-7305.1948.tb01338.x}}.

\bibitem{Jaynes1957a}
E.~T. Jaynes, Information theory and statistical mechanics, Physical Review
  106~(4) (1957) 620--630.
\newblock \href {http://dx.doi.org/10.1103/PhysRev.106.620}
  {\path{doi:10.1103/PhysRev.106.620}}.

\bibitem{Jaynes1957b}
E.~T. Jaynes, Information theory and statistical mechanics. ii, Physical Review
  108~(2) (1957) 171--190.
\newblock \href {http://dx.doi.org/10.1103/PhysRev.108.171}
  {\path{doi:10.1103/PhysRev.108.171}}.

\bibitem{Cover2006}
T.~M. Cover, J.~A. Thomas, Elements of Information Theory, Second Edition, John
  Wiley \& Sons, Hoboken, 2006.

\bibitem{Soize2017b}
C.~Soize, Uncertainty Quantification. An Accelerated Course with Advanced
  Applications in Computational Engineering, Springer, New York, 2017.
\newblock \href {http://dx.doi.org/10.1007/978-3-319-54339-0}
  {\path{doi:10.1007/978-3-319-54339-0}}.

\end{thebibliography}
\end{document}